\definecolor{darkblue}{rgb}{0,0,0.6}
\definecolor{darkred}{rgb}{0.6,0,0}
\theoremstyle{plain}
\newtheorem{thm}{Theorem}[section]
\newtheorem{lem}[thm]{Lemma}
\newtheorem{prop}[thm]{Proposition}
\theoremstyle{definition}
\newtheorem{Rk}{Remark}
\newtheorem{ass}[thm]{\textbf{Assumption}}
\newcommand{\gr}[1]{{
#1}}
\newcommand{\qg}[1]{{
#1}}
\newcommand\delbyqg[1]{
}
\title{Existence and qualitative properties of travelling waves for an epidemiological model with mutations}
\author{Quentin Griette \footnote{Centre d'\'Ecologie Fonctionnelle et \'Evolutive, UMR
5175, CNRS, 1919 Route de Mende, 34293 Montpellier, France.
E-mail: quentin.griette@cefe.cnrs.fr}\,\,\footnote{I3M, Universit\'e Montpellier 2, place Eug\`ene Bataillon, 34095 Montpellier, France. E-mail quentin.griette@univ-montp2.fr}\:
Ga\"el Raoul 
\footnote{Centre d'\'Ecologie Fonctionnelle et \'Evolutive, UMR
5175, CNRS, 1919 Route de Mende, 34293 Montpellier, France.
E-mail: raoul@cefe.cnrs.fr.}
}
\begin{document}

\maketitle


\begin{abstract}
In this article, we are interested in a non-monotone system of logistic reaction-diffusion equations. This system of equations models an epidemics where two types of pathogens are competing, and a mutation can change one type into the other with a certain rate. We show the existence of minimal speed travelling waves, that are usually non monotonic. We then provide a description of the shape of those constructed travelling waves, and relate them to some Fisher-KPP fronts with non-minimal speed.
\end{abstract}

\section{Introduction}

\gr{Epidemics of newly emerged pathogen can have catastrophic consequences}. Among those who have infected humans, we can name the black plague, the Spanish flu, or more recently SARS, AIDS, bird flu or Ebola. Predicting the propagation of such epidemics is a great concern in public health. Evolutionary phenomena play an important role in the emergence of new epidemics\gr{: such epidemics typically start when the pathogen acquires the ability to reproduce in a new host, and to be transmitted within this new hosts population}. Another \gr{phenotype} that \gr{can} often vary rapidly is the virulence of the pathogen, that is how much the parasite is affecting its host; \gr{Field data show that the virulence of newly emerged pathogens changes rapidly, which moreover seems related to} unusual \gr{spatial dynamics} observed \gr{in such 
populations} (\cite{Hawley, Phillips}, see also \cite{Lion, Heilmann}). It is unfortunately difficult to set up experiments with a controlled environment to study evolutionary epidemiology phenomena with a spatial structure, we refer to \cite{Bell,Keymer} for current developments in this direction. Developing the theoretical approach for this type of problems is thus especially interesting. Notice finally that many current problems in evolutionary biology and ecology combine evolutionary phenomena and spatial dynamics: the effect of global changes on populations \cite{PY,DSE}, biological invasions \cite{SBP,KT}, cancers or infections \cite{Gerlingeretal,Frostetal}.

In the framework of evolutionary ecology, the virulence of a pathogen can be seen as a life-history trait of the pathogen \cite{Roff, Frank}. To explain and predict the evolution of virulence in a population of pathogens, many of the recent theories introduce a \textit{trade-off} hypothesis, namely a link between the parasite's virulence and its ability to transmit from one host to another, see e.g. \cite{Aletal}. The basic idea behind this hypothesis is that the more a pathogen reproduces (in order to transmit some descendants to other hosts), the more it ''exhausts'' its host. A high virulence can indeed even lead to the premature death of the host, which the parasite \gr{within this host} rarely survives.  In other words, by increasing its transmission rate, a pathogen reduces its own life expectancy. There exists then an optimal virulence trade-off, that may depend on the ecological environment. An environment that changes  in time \gr{(e.g. if the number of susceptible hosts is heterogeneous in 
time and/or space)} can then lead to a Darwinian evolution of the pathogen 
population. \gr{For 
instance,} in \cite{
Bernetal}, an experiment shows how the composition of a viral population (composed of the phage $\lambda$ and its virulent mutant $\lambda $cl857, which differs from $\lambda$ by a single locus mutation only) evolves in the early stages of the infection of an \textit{E. Coli} culture.

\medskip

The Fisher-KPP equation is a classical model for epidemics, and more generally for biological invasions, when no evolutionary phenomenon is considered. It describes the time evolution of the density $n=n(t,x)$ of a population, where $t\geqslant 0$ is the time variable, and $x\in\mathbb R$ is a space variable. The model writes as follows:
\begin{equation}\label{KPP}
 \partial_tn(t,x)-\sigma\Delta n(t,x)=rn(t,x)\left(1-\frac{n(t,x)}K\right).
\end{equation}
It this model, the term $\sigma\Delta n(t,x)=\sigma\Delta_x n(t,x)$ models the random motion of the individuals in space, while the right part of the equation models the logistic growth of the population (see \cite{Verhulst}): when the density of the population is low, there is little competition between individuals and the number of offsprings is then roughly proportional to the number of individuals, with a growth rate $r$\,; when the density of the population increases, the individuals compete for e.g. food, or in our case for susceptible hosts, and the growth rate of the population decreases, and becomes negative once the population's density exceeds the so-called carrying capacity $K$. The model \eqref{KPP} was introduced in \cite{fisher,KPP1937}, and the existence of travelling waves for this model, that is special solutions that describe the spatial propagation of the population, was proven in \cite{KPP1937}. Since then, travelling waves have had important implications in 
biology 
and physics, and raise many challenging problems. We refer to \cite{Xin} for an overview of this field of research.

\medskip

In this study, we want to model an epidemics, but also take into account the possible diversity of the pathogen population. It has been recently noticed that models based on \eqref{KPP} can be used to study this type of problems (see \cite{Betal,ACR,BM}). Following the experiment \cite{Bernetal} described above, we will consider two populations: a wild type population $w$, and a mutant population $m$. For each time $t\geqslant 0$, $w(t,\cdot)$ and $m(t,\cdot)$ are the densities of the respective populations over a one dimensional habitat $x\in\mathbb R$. The two populations differ by their growth rate in the absence of competition (denoted by $r$ in \eqref{KPP}) and their carrying capacity  (denoted by $K$ in \eqref{KPP}). We will assume that the mutant type is more virulent than the wild type, \gr{in the sense that} it will have an increased growth rate in the absence of competition (larger $r$), at the expense of a reduced carrying capacity (smaller $K$). We assume that the dispersal rate of the 
pathogen (denoted by $\sigma$ in \eqref{KPP}) is not affected by the mutations, and is then the same for the two types. Finally, when a parent gives birth to an offspring, a mutation occurs with a rate $\mu$, and the offspring will then be of a different type. Up to a rescaling, the model is then:

\begin{equation} \label{rescalled}
\left\{\begin{array}{l} 
           \partial_tw(t,x)-\Delta_x w(t,x)=w(t,x)\left(1-\left(w(t,x)+m(t,x)\right)\right)
             +\mu(m(t, x)-w(t, x)), \\
           \partial_tm(t,x)-\Delta_x m(t,x)=r\qg m(t,x)\left(1-\left(\frac{w(t,x)+m(t,x)}{K}\right)\right)
             +\mu(w(t,x)-m(t,x)),
           \end{array}\right. 
\end{equation}
where $t\geq 0$ is the time variable, $x\in\mathbb R$ is a spatial variable, $r>1$, $K<1$ and $\mu>0$ are constant coefficients. In \eqref{rescalled}, $r>1$ represents the fact that the mutant population reproduces faster than the wild type population if many susceptible hosts are available, while $K<1$ represents the fact that the wild type tends to out-compete the mutant if many hosts are infected. 
Our goal is to study the travelling wave solutions of \eqref{rescalled}, that is solutions with the 
following form : 
\[ w(t, x)=w(x-ct), \quad m(t, x)=m(x-ct), \]
with $ c\in \mathbb R$. \eqref{rescalled} can then be re-written as follows, with $x\in\mathbb R$:
\begin{equation}\label{systemefront}
 \left\{\begin{array}{l} 
           -cw'(x)-w''(x)=w(x)\left(1-\left(w(x)+m(x)\right)\right)
             +\mu(m(x)-w(x)), \\
           -cm'(x)-m''(x)=rm(x)\left(1-\left(\frac{w(x)+m(x)}{K}\right)\right)
             +\mu(w(x)-m(x)).
           \end{array}\right. 
\end{equation}
The existence of planar fronts in higher dimension ($x\in\mathbb R^N$) is actually equivalent to the $1D$ case ($x\in\mathbb R$), our analysis would then also be the first step towards the understanding of propagation phenomena for \eqref{rescalled} in higher dimension.

\medskip

\gr{There exists a large literature  on }travelling waves for systems of several interacting species. In some cases, the systems are monotonic (or can be transformed into a monotonic system). Then, sliding methods and comparison principles can be used, leading to methods close to the scalar case \cite{Volpert2,Volpert1,Roquejoffre}. The combination of the inter-specific competition and the mutations prevents the use of this type of methods here. 
 \gr{Other methods that have been used to study systems of interacting populations include phase plane methods (see e.g. \cite{Tang,Fei}) and singular perturbations (see \cite{Gardner2,Gardner}). }
 More recently, a different approach, based on a topological degree argument, has been developed for reaction-diffusion equations with non-local terms \cite{Nadin,ACR}. The method we use here to prove the existence of travelling wave for \eqref{systemefront} will indeed be derived from these methods. Notice finally that we consider here that dispersion, mutations and reproduction occur on the same time scale. This is an assumption that is important from a biological point of view (and which is satisfied in the particular $\lambda$ phage epidemics that guides our study, see \cite{Bernetal}). In particular, we will not use the Hamilton-Jacobi methods that have proven useful to study this kind of phenomena when different time scales are considered (see \cite{Mirrahimi, Betal, BM}).

\medskip

This mathematical study has been done jointly with a biology work, see \cite{GRG}. We refer to this other article for a deeper analysis on the biological aspects of this work, as well as a discussion of the impact of stochasticity for a related individual-based model (based on simulations and formal arguments).

\medskip

We will make the following assumption, 
\begin{ass}\label{ass}
$r\in (1,\infty)$, $\mu\in\left(0,\min\left(\frac{r}{2},1-\frac{1}{r},1-K,K\right)\right)$ and $K\in \left(0,\min\left(1,\frac{r}{r-1}\left(1-\frac{\mu}{1-\mu}\right)\right)\right)$.
\end{ass}

This assumption ensures the existence of a unique stationary solution of \eqref{rescalled} of the form $(w,m)(t,x)\equiv (w^*,m^*)\in(0,1)\times (0,K)$ (see Appendix~\ref{appendix_reaction_terms}). It does not seem very restrictive for biological applications, and we believe the first result of this study (Existence of travelling waves, Theorem~\ref{thm:main}) could be obtained under a weaker assumption, namely:
\[r\in (1,\infty),\quad K\in(0,1),\quad \mu\in (0,K).\]

%
%
%
%

\medskip

 Throughout this document we will denote by $f_w$ and $f_m$ the terms on the left hand side of \eqref{systemefront}:
\begin{equation}\label{def_f}
\begin{array}{l}
 f_w(w,m):=w(1-(w+m))+\mu(m-w), \\
f_m(w,m):=rm\left(1-\left(\frac{w+m}{K}\right)\right)+
                  \mu(w-m).
\end{array}
%
\end{equation}

We structure our paper as follows : in Section~\ref{section:main_results}, we will present the main results of this article, which are three fold: Theorem~\ref{thm:main} shows the existence of travelling waves for \eqref{systemefront}, Theorem~\ref{thm:monotonicity} describes the profile of the fronts previously constructed, and Theorem~\ref{thm:KPP} relates the travelling waves for \eqref{systemefront} to travelling waves of \eqref{KPP}, when $\mu$ and $K$ are small. sections~\ref{section:proof_box}, \ref{sec:monotonicity} and  \ref{sec:K_small} are devoted to the proof of the three theorems stated in Section~\ref{section:main_results}.


\section{Main results}\label{section:main_results}

The first result is the existence of travelling waves of minimal speed for the model \eqref{rescalled}, and an explicit formula for this minimal speed. We recall that the minimal speed travelling waves are often the biologically relevant propagation fronts, for a population initially present in a bounded region only (\cite{Bramson}), and it seems to be the one that is relevant when small stochastic perturbations are added to the model (\cite{MMQ}). Although we expect the existence of travelling waves for any speed higher than the minimal speed, we will not investigate this problem here - we refer to \cite{Nadin,ACR} for the construction of such higher speed travelling waves for related models. Notice also that the convergence of the solutions to the parabolic model \eqref{rescalled} towards travelling waves, and even the uniqueness of the travelling waves, remain open problems.

\begin{thm}\label{thm:main}
 Let $r,\,K,\,\mu$ satisfy Assumption~\ref{ass}. There exists a solution $(c,w,m)\in\mathbb R\times C^\infty(\mathbb R)^2$ of 
 \eqref{systemefront}, such that
\[ \forall x\in\mathbb R,\quad w(x)\in(0,1),\,m(x)\in (0,K), \]
\[ \underset{x\to-\infty}{\liminf}(w(x)+m(x))>0,\quad \underset{x\to\infty}{\lim}(w(x)+m(x))=0, \]
\[ c=c_*, \]
where 
\begin{equation}
 c_*:=\sqrt{2\left(1+r-2\mu+\sqrt{(r-1)^2+4\mu^2}\right)}
 \label{eq:defminc}
\end{equation}
is the minimal speed $c>0$ for which such a travelling wave exists.
\end{thm}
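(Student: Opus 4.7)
The proof strategy is a topological degree argument on truncated intervals $[-a,a]$, in the spirit of \cite{Nadin,ACR}. Because the inter-specific competition terms prevent the system \eqref{systemefront} from being cooperative, monotone methods are inapplicable; instead I would treat $(c,w,m)$ as a triple of unknowns, fix the speed through a normalisation such as $w(0)+m(0)=\kappa$ for a suitably chosen constant $\kappa\in(0,\min(1,K))$, impose mixed boundary conditions of the form $(w,m)(-a)=(w^*,m^*)$ and $(w,m)(a)=(0,0)$, and connect \eqref{systemefront} by a continuous homotopy to a reference problem whose Leray-Schauder degree is readily computable (for instance a decoupled pair of Fisher-KPP equations obtained by artificially switching off the inter-specific coupling in the reaction, keeping the mutation coupling unchanged). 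Existence on $[-a,a]$ for every $a$ would then follow from degree invariance once uniform estimates are established, after which a diagonal extraction $a\to\infty$ will produce a bounded classical solution on $\mathbb R$.

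For the required estimates, the first observation is that the rectangle $[0,1]\times[0,K]$ is invariant: under Assumption~\ref{ass} one checks $f_w(0,m),f_m(w,0)\geq 0$ and $f_w(1,m),f_m(w,K)\leq 0$, so a componentwise maximum principle traps any non-negative solution in this box. The second observation constrains the admissible wave speeds through the linearisation at $(0,0)$: the Jacobian of $(f_w,f_m)$ there is
\[J_0=\begin{pmatrix}1-\mu&\mu\\\mu&r-\mu\end{pmatrix},\]
whose principal eigenvalue equals $P^{*}=\tfrac{1}{2}\bigl(1+r-2\mu+\sqrt{(r-1)^2+4\mu^2}\bigr)$; seeking decaying exponentials $(w,m)\sim(a,b)e^{-\lambda x}$ leads to the characteristic relation $\lambda^2-c\lambda+P^{*}=0$, which admits real positive roots precisely when $c\geq c_{*}=2\sqrt{P^{*}}$. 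Below this threshold the linearised solutions oscillate about $0$, which is incompatible with the positivity of $(w,m)$; this yields the lower bound $c\geq c_{*}$ for any wave satisfying the prescribed decay at $+\infty$. Combined with standard elliptic $C^{2,\alpha}_{\mathrm{loc}}$ regularity on bounded sets, the invariant box and this speed bound furnish the precompactness needed to pass to the limit $a\to\infty$.

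Two steps are likely to absorb most of the effort. First, identifying the behaviour at $-\infty$: after extraction one only knows that $\liminf_{x\to-\infty}(w+m)>0$, and upgrading this to convergence towards the coexistence state $(w^{*},m^{*})$ requires the uniqueness of the positive constant stationary solution provided by Assumption~\ref{ass} together with a Lyapunov or phase-plane argument excluding non-constant bounded entire orbits that stay away from $(0,0)$. Second, pinning the speed to $c_{*}$ rather than merely $c\geq c_{*}$: the inequality $c\geq c_{*}$ follows from the linearisation, but the matching upper bound must come from the construction itself. A natural route is to run the degree scheme for each $c>c_{*}$, for which the characteristic roots $\lambda_{\pm}(c)$ of $\lambda^2-c\lambda+P^{*}=0$ are real and positive and thus allow decaying super- and sub-solutions to be built around the desired asymptotic profile, and then to let $c\downarrow c_{*}$; the normalisation $\kappa$ is tuned so that no translational drift sends the limit to a trivial profile, producing a travelling wave at exactly the minimal speed $c_{*}$. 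The hardest single point, I expect, will be controlling the non-cooperative structure uniformly along the homotopy so that the invariant box and the linear speed bound persist throughout the degree deformation, since the usual sliding arguments are unavailable to handle the interaction of mutation and competition.
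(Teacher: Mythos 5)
Your skeleton coincides with the paper's: truncation to $[-a,a]$ with boundary data $(w^*,m^*)$ and $(0,0)$, a normalisation that fixes the speed, an invariant-box estimate, a Leray--Schauder homotopy to a decoupled reference problem, and a diagonal limit $a\to\infty$; your identification of $c_*$ through the Perron eigenvalue of the Jacobian at the origin is exactly the computation in \eqref{def:hX}. But the two steps you flag as ``likely to absorb most of the effort'' are where the proposal has genuine gaps rather than unfinished details. For the lower bound $c\geq c_*$, the scalar heuristic ``oscillation of the linearised solutions contradicts positivity'' does not transfer to the system as stated: the nonlinear corrections are $-w(w+m)$ and $-\frac{r}{K}m(w+m)$, and to absorb them into the linearisation near $+\infty$ you must first prove that $w$ and $m$ decay at comparable rates, i.e.\ $w+m\leq C\min(w,m)$ for large $x$. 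This is Lemma~\ref{lem:compuvsum} in the paper (a maximum-principle comparison of $S-\alpha w$ and $S-\alpha m$ with $S=w+m$); only with it can the vector sub-solution $\eta e^{-cx/2}\sin(\cdot)\,X$, $X$ the Perron eigenvector, be run on a half-period to contradict $w\to0$ (Proposition~\ref{prop:vitesse_front}). Without that comparability your positivity argument does not close.

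For the upper bound, your plan to ``run the degree scheme for each $c>c_*$ and let $c\downarrow c_*$'' abandons the framework you set up (in which $c$ is an unknown fixed by the normalisation) and replaces it by a second, unconstructed existence proof. The paper instead keeps $c$ among the unknowns and excludes the two boundary pieces of the degree domain by a priori estimates: at $c=c_*$ a Perron-eigenvector super-solution forces $\max_{[-a_0,a_0]}(w+m)\leq Ce^{-c_*(a-a_0)/2}$, incompatible with the normalisation once $a$ is large (Proposition~\ref{lem:refupperbound}); at $c=0$ (and $c\leq 0$) a cosine sub-solution forces $\max_{[-a_0,a_0]}(w+m)\geq \frac{K}{2}(1-\mu)$ (Proposition~\ref{lem:nosmallc}). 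You address neither end of the $c$-interval. Smaller points: a reference problem ``keeping the mutation coupling unchanged'' is not decoupled --- the paper's parameter $\sigma$ switches off the cross terms in both the competition and the mutation, and a second homotopy then reaches an explicitly solvable problem; convergence to $(w^*,m^*)$ at $-\infty$ is more than this theorem asserts (only $\liminf(w+m)>0$ is needed, obtained from $w+m\geq\nu_0$ on $\mathbb R_-$, Proposition~\ref{prop:propright}); and the decay $w+m\to0$ at $+\infty$ of the limiting solution is not automatic and also requires the monotonicity argument of that proposition.
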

The difficulty of the proof of Theorem~\ref{thm:main} has several origins:
\begin{itemize}
 \item The system cannot be modified into a monotone system (see \cite{Tang,Calvez}), which prevents the use of sliding methods to show the existence of traveling waves.
\item The competition term has a negative sign, which means that comparison principles often cannot be used directly.
\end{itemize}

As mentioned in the introduction, new methods have been developed recently to show the existence of travelling wave in models with negative nonlocal terms (see \cite{Nadin, ACR}). To prove Theorem \ref{thm:main}, we take advantage of those recent progress by considering the competition term as a nonlocal term (over a set composed of only two elements : the wild and the virulent type viruses). The method of \cite{Nadin, ACR} are however based on the Harnack inequality \gr{(or related arguments)}, that are not as simple for systems of equations (see \cite{Busca}). We have thus introduced a different localized problem (see \eqref{pbnormbox}), which allowed us to prove our result without any Harnack-type argument.

\medskip

Our second result describes the shape of the travelling waves that we have constructed above. We show that three different shapes at most are possible, depending on the parameters. In the most biologically relevant case, where the mutation rate is small, we show that the travelling wave we have constructed in Theorem~\ref{thm:main} is as follows: the wild type density $w$ is decreasing, while the mutant type density $m$ has a unique global maximum, and is monotone away from this maximum. In numerical simulations of \eqref{rescalled}, we have always observed this situation (represented in Figure~\ref{fig-shape}), even for large $\mu$. This result also allows us to show that behind the epidemic front, the densities $w(x)$ and $m(x)$ of the two pathogens stabilize to $w^*$, $m^*$, which is the long-term equilibrium of the system if no spatial structure is considered. For some results on the monotony of solutions of the non-local Fisher-KPP equation, we refer to \cite{FZ,AC}. For models closer to \eqref{rescalled} (see e.g. \cite{ACR,Betal}), we do not believe any qualitative result describing the shape of the travelling waves exists.

\begin{figure}[h]
\centering
\includegraphics{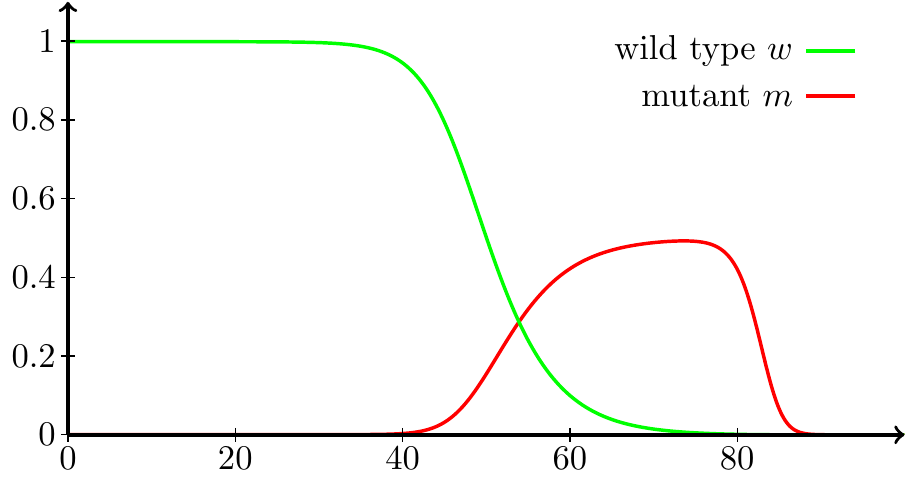}
\caption{Numerical simulation of \eqref{rescalled} with \qg{$r=2$, $K=0.5$, $\mu=0.01$}, with a heaviside initial condition for \qg{$w$ and null initial condition for $m$.} The numerical code is based on an \qg{implicit Euler scheme}. For large times, the solution seems to converge to a travelling wave, that we represent here, propagating towards large $x$. In the initial phase of the epidemics, the mutant ($m$, red line) population is dominant, but this mutant population is then quickly replaced by a population almost exclusively composed of wild types ($w$, green line).}
\label{fig-shape} 
\end{figure}

\begin{thm}\label{thm:monotonicity}
Let $r,\,K,\,\mu$ satisfy Assumption~\ref{ass}. There exists a solution $(c,w,m)\in \mathbb R_+\times C^\infty(\mathbb R)^2$ of \eqref{systemefront} such that 
\[\lim_{x\to-\infty}(w(x),m(x))=(w^*,m^*),\quad\lim_{x\to\infty}(w(x),m(x))=(0,0),\]
where $(w^*,m^*)$ is the only solution $(w^*,m^*)\in (0,1]\times (0,K]$ of $f_w(w,m)=f_m(w,m)=0$. 

The solution $(c,w,m)\in \mathbb R_+\times C^\infty(\mathbb R)^2$ satisfies one of the three following properties:
\begin{description}
 \item[(a)] $w$ is decreasing on $\mathbb R$, while $m$ is increasing on $(-\infty,\bar x]$ and decreasing on $[\bar x,\infty)$ for some $\bar x<0$, 
\item[(b)] $m$ is decreasing on $\mathbb R$, while $w$ is increasing on $(-\infty,\bar x]$ and decreasing on $[\bar x,\infty)$ for some $\bar x<0$,
\item[(c)] $w$ and $m$ are decreasing on $\mathbb R$.
\end{description}
Moreover, there exists $\mu_0=\mu_0(r,K)>0$ such that if $\mu<\mu_0$, then there exists a solution as above which satisfies $(\mathrm a)$.
\end{thm}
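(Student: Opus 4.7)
My plan is to start from the travelling wave $(c_*,w,m)$ produced by Theorem~\ref{thm:main}, upgrade its boundary behaviour at $-\infty$ to a genuine limit, and then extract its global profile by combining linearizations at the two end states with maximum-principle arguments on the derivatives $w'$ and $m'$.

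The first step is to upgrade the one-sided bound $\liminf_{x\to-\infty}(w+m)>0$ of Theorem~\ref{thm:main} into the limit $(w,m)(x)\to(w^*,m^*)$. By elliptic regularity and the a priori bounds $0<w<1$, $0<m<K$, the translates of $(w,m)$ are relatively compact, so any sequence $x_n\to-\infty$ extracts a bounded entire solution of the ODE system \eqref{systemefront}; the $\alpha$-limit set is therefore a nonempty compact connected invariant subset of $(0,1]\times(0,K]$. Under Assumption~\ref{ass}, $(w^*,m^*)$ is the only zero of $(f_w,f_m)$ in this region, and an analysis of the ODE (combining the trapping of the orbit in the box with the exclusion of non-stationary bounded orbits) then reduces this $\alpha$-limit to the single equilibrium $\{(w^*,m^*)\}$.

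I would next carry out the monotonicity analysis at the two ends. Near $+\infty$, linearizing at $(0,0)$ yields $Z''+cZ'+AZ=0$ with $A=\left(\begin{smallmatrix}1-\mu&\mu\\\mu&r-\mu\end{smallmatrix}\right)$, whose leading eigenvalue equals $c_*^2/4$ and whose leading eigenvector $v$ has strictly positive entries; at the critical speed $c=c_*$ the decaying solutions behave as $(\alpha+\beta x)\,e^{-(c_*/2)x}\,v$, which forces $w'<0$ and $m'<0$ for $x$ large enough. Near $-\infty$, linearizing at $(w^*,m^*)$ gives a hyperbolic equilibrium of the ODE whose two-dimensional $x\to-\infty$ stable manifold imposes a monotone approach, fixing a definite sign pattern for $(w',m')$ in a left neighbourhood of $-\infty$. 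The heart of the proof is then to show that each of $w'$ and $m'$ has at most one zero on $\mathbb R$, and moreover that $w$ and $m$ cannot both possess an interior extremum. I argue by contradiction: at each interior critical point $x_0$ of $w$ the relation $-w''(x_0)=f_w(w,m)(x_0)$ constrains $(w,m)(x_0)$ to lie in a specific subregion of $(0,1)\times(0,K)$ (and analogously for $m$); coupling these pointwise constraints with the scalar second-order linear equations obtained by differentiating \eqref{systemefront}, to which a Sturm-type comparison applies, rules out multiple sign changes of $w'$ and $m'$ and excludes the fourth possible configuration, leaving exactly one of (a), (b), (c). The extremum position $\bar x<0$ then follows after the translation normalization implicit in Theorem~\ref{thm:main}.

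For the small-$\mu$ part, I proceed by perturbation around $\mu=0$. In this limit $c_*\to 2\sqrt r$, $(w^*,m^*)\to(1,0)$, and the $w$-equation degenerates to a scalar Fisher-KPP equation driven at the strictly supercritical speed $2\sqrt r>2$, whose wave is strictly decreasing with decay rate $\sqrt r$ at $+\infty$. A continuity argument transfers the strict monotonicity of $w$ to a range $\mu\in(0,\mu_0)$, excluding case (b). The resonance between the $w$-decay rate $\sqrt r$ and the double characteristic root of the linearized mutant equation at the critical speed forces a mutant tail of order $\mu\,x\,e^{-\sqrt r x}$ near $+\infty$, which has a strict interior maximum for small $\mu$, excluding case (c) and leaving (a). The main obstacle I foresee is the classification of shapes in the previous paragraph: because \eqref{systemefront} is genuinely non-cooperative (the cross-competition terms $-wm$ have the wrong sign for sliding), one cannot work directly with the monotonicity of $(w,m)$, and all sign-change information must be extracted from scalar linear equations for $w'$ and $m'$ with carefully tracked coupling through the lower-order coefficients.
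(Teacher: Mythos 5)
Your overall architecture (classify the shape of the wave, then treat small $\mu$ separately) matches the paper's, but the two steps where you place the real work both contain genuine gaps. For the shape classification, you propose to differentiate \eqref{systemefront} and apply a Sturm-type comparison to the scalar equations for $w'$ and $m'$. But those equations are coupled through the off-diagonal coefficients $\partial_m f_w=\mu-w$ and $\partial_w f_m=\mu-\frac rK m$, which change sign along the wave, so the system for $(w',m')$ is neither scalar nor cooperative and no Sturm argument applies directly; you acknowledge the ``carefully tracked coupling'' but this is precisely the missing idea, not a technicality. The paper avoids it entirely: it proves the classification first on the truncated problem on $[-a,a]$ (Lemma~\ref{lem:plan_phase}), where the boundary condition $(w,m)(-a)=(w^*,m^*)$ pins the orbit at the intersection of the two nullclines $w=\phi_w(m)$ and $m=\phi_m(w)$, and a four-case analysis on the signs of $w'(-a),m'(-a)$, using the nullcline inclusions of Lemma~\ref{lem:zerosinclusions}, rules out all but the two admissible patterns; the patterns then pass to the locally uniform limit (with $\bar x\to-\infty$ producing case (c)). Note also that the limits $(w,m)\to(w^*,m^*)$ at $-\infty$ are \emph{deduced from} this monotonicity in the paper, whereas your $\alpha$-limit-set route requires excluding non-stationary bounded entire orbits of a four-dimensional, non-gradient, non-monotone ODE system — a step you assert but do not supply.

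The small-$\mu$ part has a second structural problem: your ``continuity argument'' transferring monotonicity of $w$ from the decoupled case $\mu=0$ to $\mu\in(0,\mu_0)$ presupposes a continuous (or at least upper-semicontinuous) selection of travelling waves in $\mu$, which is not available — the paper explicitly leaves uniqueness of the waves open, and the existence proof is by degree theory, which gives no such continuity. The paper's actual argument is a direct quantitative contradiction: assume $\Vert m\Vert_\infty\le m^*$, build an explicit exponential supersolution for $w$ on a left half-line, then explicit parabolic and exponential subsolutions for $m$ on $[\tilde x,\bar x]$, and compare the resulting lower bound on $\Vert m\Vert_\infty$ with the estimate $m^*\le C\mu$ of Lemma~\ref{lem:estmu0}; the mismatch of exponents (using $r>1$) gives a contradiction for small $\mu$, and $\Vert m\Vert_\infty>m^*$ then forces case (a) since in cases (b) and (c) one has $\Vert m\Vert_\infty=\lim_{x\to-\infty}m(x)=m^*$. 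Your heuristic about a resonant tail $\mu x e^{-\sqrt r x}$ at $+\infty$ points in the right direction qualitatively but does not by itself locate a global interior maximum of $m$ or exclude case (b).
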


Finally, we consider the special case where the mutant population is small (due to a small carrying capacity $K>0$ of the mutant, and a mutation rate \gr{satisfying} $0<\mu<K$). If we neglect the mutants completely, the dynamics of the wild type would be described by the Fisher-KPP equation \eqref{KPP} (with $\sigma=r=K=1$), and they would then propagate at the minimal propagation speed of the Fisher-KPP equation, that is $c=2$. Thanks to Theorem~\ref{thm:main}, we know already that the mutant population will indeed have a major impact on the minimal speed of the population which becomes $c_*=2\sqrt{r}+\mathcal O(\mu)>2$, and thus shouldn't be neglected. In the next theorem, we show that the profile of $w$ is indeed close to the travelling wave of the Fisher-KPP equation with the non-minimal speed $2\sqrt r$, provided the conditions mentioned above are satisfied (see Figure~\ref{fig-KPP}). The effect of the mutant is then essentially to speed up the epidemics.

\begin{figure}[h]
\centering
\includegraphics[scale=0.75]{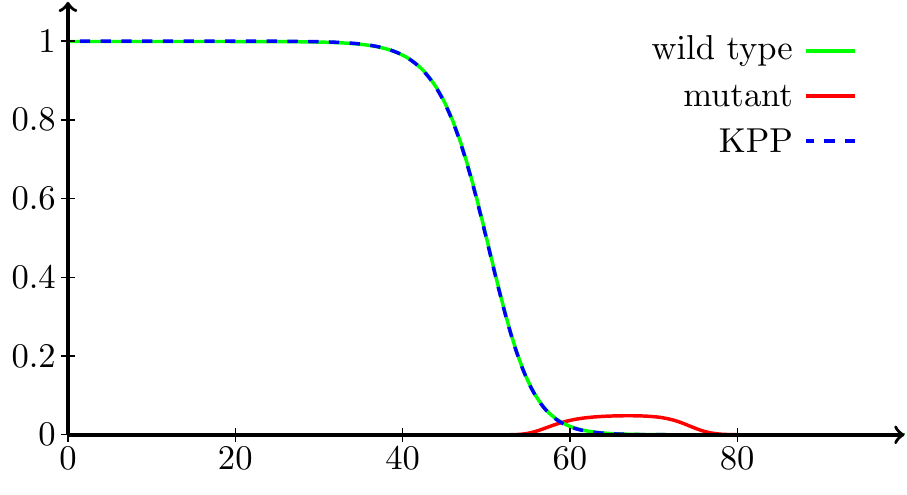}\hfill
\includegraphics[scale=0.75]{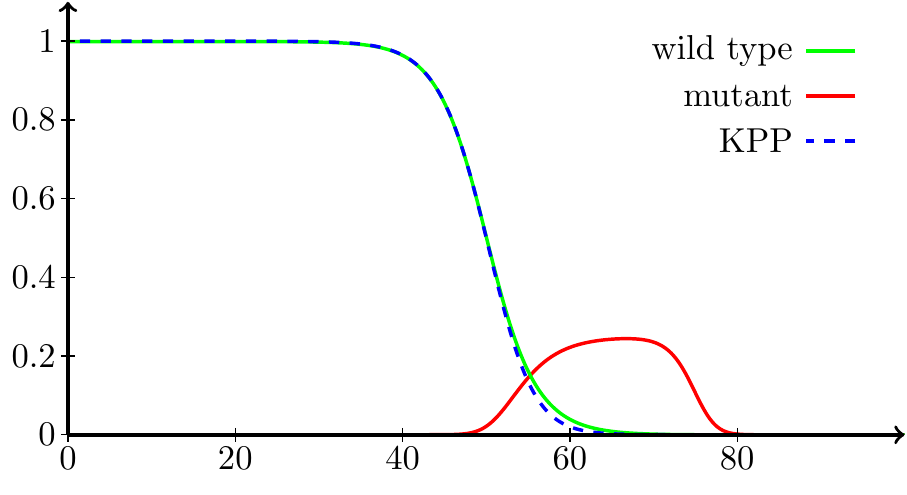}\\\medskip
\includegraphics[scale=0.75]{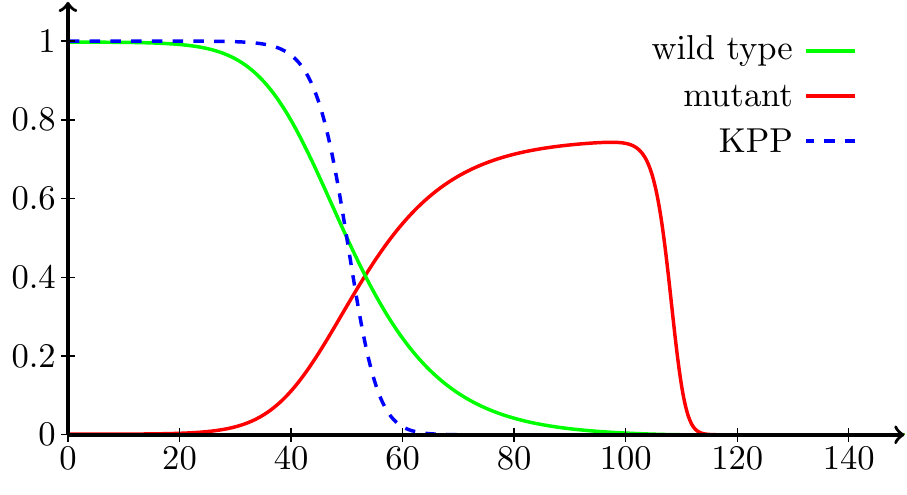}
\caption{Comparison of the travelling wave solutions of \eqref{rescalled} and the travelling wave solution of the Fisher-KPP equation of (non-minimal) speed $2\sqrt r$. These figures are obtained for \qg{$r=2$, $\mu=0.001$,} and three values of $K$: \qg{$K=0.05,\,0.25,\,0.75$.} We see that the agreement between the density of the wild type ($w$, green line) and the corresponding solution of the Fisher-KPP equation ($u$, dashed blue line) is good as soon as $K\leq 0.25$. The travelling waves solutions of \eqref{rescalled} are obtained numerically as long-time solutions of \eqref{rescalled} (based on an explicit Euler scheme), while the travelling waves solutions of the Fisher-KPP equations (for a the given speed $2\sqrt r$ that is not the minimal travelling speed for the Fisher-KPP model) is obtained thanks to a phase-plane approach, with a classical ODE numerical solver.}
\label{fig-KPP} 
\end{figure}

%
%
%
%
%
%
%
%
%

\begin{thm}\label{thm:KPP}
 Let $r\in(1,\infty)$, $K\in (0,1)$, $\mu\in(0,K)$ and $(c_*,w,m)\in\mathbb R\times C^0(\mathbb R)^2$ (see Theorem~\ref{thm:main} for the definition of $c_*$), $w,\,m>0$, a solution of \eqref{systemefront} such that 
\[ \underset{x\to-\infty}{\liminf}(w(x)+m(x))>0,\quad \underset{x\to\infty}{\lim}(w(x)+m(x))=0.\]
 There exists $ C=C(r)>0$, $ \beta\in\left(0, \frac{1}{2}\right) $ and $ \varepsilon>0 $ such that if $ 0<\mu<K<\varepsilon $, then
 \[ \Vert w-u\Vert_{L^\infty}\leqslant C K^\beta, \]
 where $ u \in C^0(\mathbb R)$ is a traveling wave of the Fisher-KPP equation, that is a solution (unique up to a translation) of
\begin{equation}\label{travelling_wave_KPP}
\left\{\begin{array}{l}
   -cu'- u''= u(1-u),\\
\underset{x\to-\infty}{\lim}u(x)=1,\; \underset{x\to\infty}{\lim} u(x)=0,
  \end{array}\right.
\end{equation}
 with speed $ c=\qg{c_0=2\sqrt r.} $
\end{thm}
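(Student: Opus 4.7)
The plan is to view Theorem~\ref{thm:KPP} as a quantitative perturbation statement: when $0 < \mu < K \ll 1$, the wild-type component $w$ obeys an equation that differs from the scalar Fisher-KPP equation at speed $c_0 = 2\sqrt{r}$ only by terms of size $O(K)$, and the goal is to convert this into an $L^\infty$ bound on $w - u$. First I would collect a few preliminary estimates. From the explicit formula \eqref{eq:defminc}, Taylor expansion at $\mu = 0$ gives $c_* = 2\sqrt{r} - \mu/\sqrt{r} + O(\mu^2)$, so $|c_* - c_0|\leq C(r)\mu$. The a priori bounds $0 < w < 1$ and $0 < m < K$ together with classical elliptic regularity applied to the ODE system produce uniform $L^\infty$ bounds on $w'$, $w''$, $m'$, $m''$. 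Finally, a direct analysis of the algebraic system $f_w(w^*,m^*) = f_m(w^*,m^*) = 0$ (as in Appendix~\ref{appendix_reaction_terms}) shows that the left-hand equilibrium satisfies $|1 - w^*| + m^* \leq C(r) K$, so the left-end value $u(-\infty) = 1$ of the Fisher-KPP wave agrees with $\lim_{x\to-\infty} w(x)$ up to an error of order $K$.

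Next, rewriting the first equation in \eqref{systemefront} at speed $c_0$ instead of $c_*$ yields
\[
-c_0\, w'(x) - w''(x) = w(x)\bigl(1 - w(x)\bigr) + E(x), \quad E(x) := (c_0 - c_*)\, w'(x) + \mu\bigl(m(x) - w(x)\bigr) - w(x)\, m(x),
\]
with $\|E\|_{L^\infty(\mathbb{R})} \leq C(r) K$. The problem is then to show that such a globally bounded solution, with $\lim_{x\to+\infty} w(x) = 0$ and $\lim_{x\to-\infty} w(x) = w^*$, must lie within $C K^\beta$ of a translate of the unique Fisher-KPP wave $u$ at speed $c_0$. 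I would fix the translation by imposing $w(0) = u(0) = 1/2$. Since $m$ enters the $w$-equation only as a known forcing once the wave $(w,m)$ is in hand, one can now work purely at the scalar level. The natural mechanism is a sub/super-solution argument: construct barriers of the form $u_\pm(x) := u(x \mp \delta) \pm \eta\,\psi(x)$, where $\psi > 0$ is a profile tailored to the tails of $u$ and the parameters $\delta = \delta(K)$, $\eta = \eta(K)$ are chosen small enough that $u_+$ (resp.\ $u_-$) is a super- (resp.\ sub-) solution of the perturbed operator displayed above. The scalar comparison principle for the Fisher-KPP equation on $\mathbb{R}$, applied together with the matching limits at $\pm\infty$, then yields $\|w - u\|_{L^\infty} \leq C\,(\delta + \eta)$.

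The main obstacle, which I believe is responsible for the sub-linear exponent $\beta < 1/2$ in the statement, is the behaviour at $+\infty$. The Fisher-KPP wave $u$ at the non-minimal speed $c_0$ decays like $A\, e^{-\lambda_- x}$ with $\lambda_- = \sqrt{r} - \sqrt{r-1}$, while the error $E$ contains the terms $-w\,m$, $\mu\,m$ and $(c_0 - c_*)w'$, which a priori decay no faster than $w$ itself, so the ratio $E/u$ is not uniformly small at infinity and a single global $L^\infty$ barrier cannot beat $O(K)$. The remedy is a two-region argument: on a large compact interval $[-L(K),L(K)]$, the bound $\|E\|_\infty \leq C K$ combined with the barrier construction gives an inner estimate of order $K$ (with constants possibly growing with $L$); on the outer regions, one exploits the explicit exponential decay rates $\lambda_\pm$ of the linearized problem to get a tail contribution of order $e^{-\lambda_- L(K)}$, together with the shift $|w^*-1| = O(K)$ on the left side. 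Optimizing $L(K)$ to balance the inner and outer contributions, and possibly interpolating between a weighted and an unweighted norm, produces a global bound of the form $C K^\beta$ with $\beta \in (0, 1/2)$. Verifying that the barriers $u_\pm$ remain ordered sub/super-solutions across the sign changes in $E$, and matching the shifted equilibrium $w^* \neq 1$ at $-\infty$, constitute the main technical content of the argument.
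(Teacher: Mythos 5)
Your plan is essentially the strategy of the paper, with a different packaging of the barriers. The paper's proof also (i) sandwiches the profile between solutions of $O(K)$-perturbed Fisher-KPP equations via a sliding argument, (ii) observes that a single global comparison cannot beat $O(K)$ because the perturbation is not small relative to the solution in the front region, (iii) splits into a plateau region, where evaluating the equation at a maximum of the difference gives a direct $O(K)$ bound, and a front region, where the bound degrades exponentially with the length of that region, and (iv) optimizes the split to obtain $K^\beta$; it then handles the speed discrepancy $c_*-c_0=O(\mu)$ by a second application of the same comparison, exactly as you propose by absorbing $(c_0-c_*)w'$ into $E$. The organizational differences: the paper does not perturb $u$ directly but introduces two explicit auxiliary waves $\overline w,\underline w$ solving $-c\overline w'-\overline w''=\overline w(1-\overline w)+K$ and $-c\underline w'-\underline w''=\underline w(1-2K-\underline w)$, sandwiches \emph{both} $w$ and $u$ between them, and only estimates $\overline w-\underline w$; and instead of your free interval length $L(K)$, it encodes the split through the normalization $w(0)=u(0)=K^\beta$, which makes the half-line $x\geq 0$ trivial (both profiles are monotone and below $K^\beta$ there by Proposition~\ref{prop:w_left}) and confines all the work to $(-\infty,0]$. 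That last point is where your version requires the most care: normalizing at $w(0)=u(0)=1/2$ forces your two-region comparison to run through the entire right tail, where the KPP nonlinearity is linearly unstable about $0$ and the barriers $u(x\mp\delta)\pm\eta\psi$ are hardest to keep ordered --- precisely the step you defer as ``the main technical content.'' The paper's choice of normalization height as the optimization parameter sidesteps this, and you may find it simpler to adopt it; the two parametrizations yield the same exponent $\beta$, which in the paper comes out as $\beta=(1+\alpha/\lambda_-)^{-1}$ from balancing $K\,w(0)^{-\alpha/\lambda_-}$ against $w(0)$.
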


 The Theorem~\ref{thm:KPP} is interesting from an epidemiological point of view: it describes a situation where the spatial dynamics of a population would be driven by the characteristics of the mutants, even though the population of these mutants pathogens is very small, and thus difficult to sample in the field.


\section{Proof of Theorem \ref{thm:main}}\label{section:proof_box}

We will prove Theorem~\ref{thm:main} in several steps. We refer to Remark~\ref{Rk:endproof} for the conclusion of the proof.

\subsection{A priori estimates on a localized problem}\label{subsection:apriori}

We consider first a restriction of the problem \eqref{systemefront} to a compact interval $[-a,a]$, for $a>0$. More precisely, we consider, for $c\in\mathbb R$,

\begin{equation}
\left\{\begin{array}{l}
w,\, m\in C^0([-a, a]),\\
-cw'-w''=f_w(w, m)\chi_{w\geq 0}\chi_{m\geq 0}, \\
-cm'-m''=f_m(w, m)\chi_{w\geq 0}\chi_{m\geq 0}, \\
w(-a)=w^*,\, m(-a)=m^*,\, w(a)=m(a)=0,
\end{array}\right. \label{eq:pbbox}
\end{equation}
where we have used the notation \eqref{def_f}, and $(w^*,m^*)$ are defined in the Appendix, see Subsection~\ref{appendix_reaction_terms}.

\subsubsection{Regularity estimates on solutions of \eqref{eq:pbbox}}\label{subsubsection:regularity}

The following result shows the regularity of the solutions of \eqref{eq:pbbox}.
\begin{prop}
Let $r,\,K,\,\mu$ satisfy Assumption~\ref{ass} and $a > 0$. If $ (w, m) \in L^\infty([-a, a]) $ satisfies
\begin{equation}
 \left\{\begin{array}{l} -cw'-w''=f_w(w, m), \\ -cm'-m''=f_m(w, m), \end{array}\right. 
\label{eq:thmregularity}
\end{equation}
on $ [-a, a]$, where $f_w,\,f_m$ are defined by \eqref{def_f}, and $c\in\mathbb R$, then $ w, m\in C^\infty([-a, a]). $ 
\label{thm:regularity}
\end{prop}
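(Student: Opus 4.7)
The plan is a standard elliptic (indeed ODE) bootstrap, exploiting the fact that the nonlinearities $f_w$ and $f_m$ defined in \eqref{def_f} are polynomials (quadratic) in $(w,m)$, so any Sobolev or $C^k$ regularity of $(w,m)$ is inherited by $f_w(w,m)$ and $f_m(w,m)$ through composition with the chain rule.

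First I would interpret the system \eqref{eq:thmregularity} in the distributional sense on $(-a,a)$. Rewriting it as $w'' = -cw' - f_w(w,m)$ and $m'' = -cm' - f_m(w,m)$, the right-hand sides are built out of $w, m \in L^\infty$ and their first distributional derivatives. Since the second-order linear operator $L := -\partial_x^2 - c\partial_x$ is uniformly elliptic with constant coefficients, standard interior elliptic regularity---or, in this one-dimensional setting, a direct argument using the explicit Green kernel of $L$---upgrades any $L^\infty$ solution of $L w = g \in L^\infty$ to $W^{2,\infty}_{\mathrm{loc}}$. Applied to both equations this yields $w, m \in W^{2,\infty}_{\mathrm{loc}}([-a,a]) \hookrightarrow C^{1,1}_{\mathrm{loc}}([-a,a])$, and by the same ODE representation extended up to the endpoints, $w, m \in C^1([-a,a])$.

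Then I would bootstrap: assuming by induction that $w, m \in C^k([-a,a])$ for some $k \geq 1$, composition with the polynomials $f_w, f_m$ gives $f_w(w,m), f_m(w,m) \in C^k([-a,a])$. The equations
\[
w'' = -cw' - f_w(w,m), \qquad m'' = -cm' - f_m(w,m),
\]
then have right-hand sides in $C^k$, which forces $w'', m'' \in C^k$, i.e.\ $w, m \in C^{k+2}([-a,a])$. Iterating over $k$, one obtains $w, m \in \bigcap_{k \geq 0} C^k([-a,a]) = C^\infty([-a,a])$.

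The only real subtlety is making precise the sense in which an $L^\infty$ function ``satisfies'' \eqref{eq:thmregularity} before any classical derivatives are available; once the equations are phrased in $\mathcal D'([-a,a])$, the remainder is entirely routine. Notably, no estimate specific to the coupling between the two equations is required: the coupling enters only through the source term of each scalar equation, which the bootstrap swallows term by term.
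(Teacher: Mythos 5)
Your proof is correct and follows essentially the same bootstrap strategy as the paper's; the difference is purely in the technical machinery. The paper first invokes $L^p$ elliptic theory (Gilbarg--Trudinger, Theorem 9.15) to get $w,m\in W^{2,p}$ for all $p$, passes to $C^{1,\alpha}$ by Sobolev embedding, and then iterates Schauder estimates (Theorem 6.14) in the H\"older scale, gaining two derivatives per step. You instead exploit the one-dimensional setting to integrate the equation directly (via the Green kernel / integrating factor for $-\partial_x^2-c\partial_x$), which is more elementary and also handles regularity up to the endpoints without any boundary-regularity discussion. One bookkeeping slip: under the induction hypothesis $w,m\in C^k$, the right-hand side of your rewritten equation $w''=-cw'-f_w(w,m)$ is only $C^{k-1}$ (because of the $-cw'$ term), not $C^k$, so as literally written you gain one derivative per step ($w\in C^{k+1}$), not two. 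This does not affect the conclusion --- the bootstrap still terminates at $C^\infty$ --- and the two-derivative gain is recoverable if you keep $-cw'$ on the left and note that $(e^{cx}w')'=-e^{cx}f_w(w,m)\in C^k$ forces $w'\in C^{k+1}$. Your remark that the coupling plays no role, since each equation is treated as a scalar equation with a source, matches the paper's treatment exactly.
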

\begin{proof}[Proof of Proposition \ref{thm:regularity}]
 Since $f_w(w, m),\,f_m(w, m)\in L^\infty([-a,a])\subset L^p([-a, a])$ for any $p>1$, the classical theory (\cite{GT98}, 
 theorem 9.15) predicts that the solutions of the Dirichlet problem associated with  \eqref{eq:thmregularity} lies in \qg{$ W^{2, p}. $} This shows that $ w, m\in \qg{W^{2, p}} ((-a, a))$ for any $ p>1. $ But 
 then $ w, m\in C^{1, \alpha} ((-a, a))$ for any $ 0\leqslant \alpha < 1 $ (thanks to Sobolev embeddings). It follows
 that $ f(w, m) $ is a $ C^{1, \alpha}((-a, a)) $ function of the variable $ x\in(-a, a)$ (see \eqref{def_f} for the definition of $f$). Let us choose one such $ \alpha\in (0,1)$.  Now we can apply classical theory (\cite{GT98}, 
 theorem 6.14) to deduce that $ w, m\in C^{2, \alpha}((-a, a))$. But then $ w''$ and $m'' $ verify some uniformly 
 elliptic equation of the type
 \[ -c(w'')'-(w'')''=g, \]
\[ -c(m'')'-(m'')''=h, \]
with $ g, h\in C^{0, \alpha}((-a, a)) $, and we can apply again (\cite{GT98}, theorem 6.14). This argument can be used
recursively to show that $ w, m\in C^{2n, \alpha} ((-a, a))$ for any $ n\in\mathbb N $, so that finally, $ w, m\in C^\infty((-a, a))$.
\end{proof}

\subsubsection{Positivity and $ L^\infty $ bounds for solutions of \eqref{eq:pbbox}}\label{subsubsection:Linfty}

In this subsection, we prove the positivity of the solutions of \eqref{eq:pbbox}, as well as some $ L^\infty $ bounds.

\begin{prop}

Let $r,\,K,\,\mu$ satisfy Assumption~\ref{ass}, $a>0$, and $c\in\mathbb R$. If $(w,m)\in C^0([-a,a])^2$ is a solution of \eqref{eq:pbbox}, then $w$ and $m$ satisfy are positive, that is $w(x)>0$ and $m(x)>0$ for all $x\in [-a,a)$.
 \label{lem:samesolbox}
\end{prop}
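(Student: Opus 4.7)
The plan is to split the argument into two steps: first establish non-negativity $w, m \geq 0$ on $[-a, a]$, then upgrade to strict positivity on $[-a, a)$ via the strong maximum principle. The key structural fact driving both steps is the cooperative behaviour of $(f_w, f_m)$ on the boundary of the positive cone, namely $f_w(0, m) = \mu m \geq 0$ and $f_m(w, 0) = \mu w \geq 0$ for $w, m \geq 0$.

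For the first step, I would consider the open set $U_w := \{x \in [-a, a] : w(x) < 0\}$. The boundary data $w(-a) = w^* > 0$ and $w(a) = 0$ rule out the endpoints of $[-a, a]$ as members of $U_w$. On $U_w$ the cutoff $\chi_{w \geq 0}\chi_{m \geq 0}$ vanishes, so the first equation of \eqref{eq:pbbox} reduces to the linear homogeneous ODE $-cw' - w'' = 0$ on each connected component. Its general solution $w(x) = A + B e^{-cx}$ (or $A + Bx$ when $c = 0$) is strictly monotone unless constant, so it cannot vanish at two distinct points without being identically zero. Each connected component $(\alpha, \beta)$ of $U_w$ satisfies $\alpha, \beta \in (-a, a)$ with $w(\alpha) = w(\beta) = 0$ by continuity, forcing $w \equiv 0$ on $[\alpha, \beta]$, which contradicts $w < 0$ on the interior. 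Hence $U_w = \emptyset$ and $w \geq 0$ on $[-a, a]$; the same argument applied to the second equation yields $m \geq 0$.

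For the second step, since now $\chi_{w \geq 0}\chi_{m \geq 0} \equiv 1$, the pair $(w, m)$ actually solves \eqref{eq:thmregularity}, so by Proposition \ref{thm:regularity} it lies in $C^\infty([-a, a])$; in particular $w + m$ is bounded. I would rewrite the first equation as
\[ -w'' - cw' + V(x)\, w = \mu m + C w, \]
where $V(x) := w(x) + m(x) + \mu - 1 + C$, and choose $C$ larger than $\|w + m + \mu - 1\|_\infty$, so that $V \geq 0$ and the right-hand side is non-negative. Suppose for contradiction that $w(x_0) = 0$ at some $x_0 \in (-a, a)$. Then $-w$ attains a non-negative interior maximum at $x_0$ while satisfying the linear differential inequality $(-w)'' + c (-w)' - V(-w) \geq 0$ with zeroth-order coefficient $-V \leq 0$. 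The strong maximum principle (Gilbarg--Trudinger, Theorem 3.5) then forces $-w$ to be constant, hence $w \equiv 0$, contradicting $w(-a) = w^* > 0$. Therefore $w > 0$ on $(-a, a)$, and combined with $w(-a) = w^* > 0$ this gives $w > 0$ on $[-a, a)$. An identical argument applied to the equation for $m$, using $f_m(w, 0) = \mu w \geq 0$ to produce a non-negative right-hand side, yields $m > 0$ on $[-a, a)$.

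The only subtle point I anticipate is ensuring that the strong maximum principle is really applicable: the zeroth-order coefficient $w + m + \mu - 1$ in the natural linearisation has no definite sign, but shifting by $C$ turns it non-negative while the cross-term $\mu m$ (respectively $\mu w$) keeps the right-hand side non-negative. This cooperative coupling at the boundary of the positive cone is exactly what makes the maximum principle strategy succeed despite the system not being globally monotone.
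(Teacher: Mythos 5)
Your proof is correct, but it is organised differently from the paper's. The paper does everything in one pass: it rewrites $f_w(w,m)=w(1-\mu-w)+m(\mu-w)$ and observes that $f_w(w,m)\chi_{w\geqslant 0}\chi_{m\geqslant 0}\geqslant 0$ on the whole set $\{w\leqslant\min(\mu,1-\mu)\}$; the weak minimum principle on the connected component of that set containing a point where $w\leqslant 0$ then gives $w\geqslant 0$ there, and since the resulting inequality $-cw'-w''\geqslant 0$ has no zeroth-order term, the strong maximum principle applies at an interior zero without any sign discussion and even locates the only possible zero at $x=a$. You instead split the argument in two: first you exploit the fact that the cutoff makes the equation exactly homogeneous on $\{w<0\}$, so the explicit solution $A+Be^{-cx}$ cannot vanish at both endpoints of a component without vanishing identically; then, once $(w,m)$ solves \eqref{eq:thmregularity}, you run the strong maximum principle after shifting the zeroth-order coefficient by a large constant $C$, using the quasi-positivity $f_w(0,m)=\mu m\geqslant 0$ and $f_m(w,0)=\mu w\geqslant 0$. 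Both routes are sound. Your second step is the more systematic one: it relies only on cooperativity at the boundary of the positive cone and would survive any modification of $f_w,\,f_m$ preserving that property, whereas the paper's choice of the larger set $\{w\leqslant\min(\mu,1-\mu)\}$ is what lets it bypass the zeroth-order sign issue entirely, at the price of using the specific factorisations of $f_w$ and $f_m$. The only point you leave implicit is the interior regularity needed to write the explicit solution on $\{w<0\}$ and to invoke Proposition~\ref{thm:regularity}; since solutions of \eqref{eq:pbbox} are $W^{2,p}$ with an $L^\infty$ right-hand side that vanishes on $\{w<0\}$, this is harmless.
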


\begin{proof}[Proof of Proposition \ref{lem:samesolbox}]
We observe that
  \[ f_w(w, m)=w(1-(w+m))+\mu(m-w)=w(1-\mu-w)+m(\mu-w), \]
  so that if $ w\leqslant\min(\mu, 1-\mu), $ then $ f_w(w, m)\chi_{w\geqslant 0}\chi_{m\geqslant 0}
  \geqslant 0. $ Let $ x_0\in [-a, a] $ such that $ w(x_0)\leqslant 0, $ and $ [\alpha, \beta] $ the 
  connex
  compound of the set $ \{w\leqslant \min(\mu, 1-\mu)\} $ that contains $ x_0. $ Since $ -cw'-w''\geqslant 0 $ 
  over $(\alpha, \beta) $ and $ w(\alpha), w(\beta)\geqslant 0, $ the weak minimum principle imposes $\underset{(\alpha, \beta)}{\inf}w\geqslant 0$, and thus $ w(x_0)=0. $ But then $ w $ reaches its global minimum at $ x_0, $ so the strong maximum
  principle imposes that $ x_0\in\{\alpha, \beta\}, $ or else $ w $ would be constant. We deduce then from 
  our hypothesis $ (w(-a)>0, w(a)=0) $ that $ x_0=\beta=a. $ That shows that $ w>0 $ in $ [-a, a). $ 
  
\medskip

To show that $m>0$, we notice that
\begin{eqnarray*}
f_m(w, m)&=&rm\left(1-\frac{w+m}{K}\right)+\mu(w-m) \\
  &=&m\left(r-\mu-\frac{r}{K}m\right)+  w\left(\mu-\frac{r}{K}m\right),
\end{eqnarray*}
  so that if $ m\leqslant\min\left(\frac{K}{r}\mu, K\left(1-\frac{\mu}{r}\right)\right), $ 
  then $ f_m(w, m)\chi_{w\geqslant 0}\chi_{m\geqslant 0}
  \geqslant 0$. The end of the argument to show the positivity of $w$ can the n be reproduced to show that $ m>0 $.

\end{proof}

\begin{prop}
Let $r,\,K,\,\mu$ satisfy Assumption~\ref{ass}, $a>0$, and $c\in\mathbb R$. If $(w,m)\in C^0([-a,a])^2$ is a positive solution of \eqref{eq:pbbox}, then $w$ and $m$ satisfy
\[ \forall x\in (-a, a), \quad w(x)< 1, \]
\[ \forall x\in (-a, a), \quad m(x)< K. \]
\label{thm:precisebound}
\end{prop}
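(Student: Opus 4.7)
The plan is to prove the two strict inequalities separately by a weak minimum principle argument applied to the \emph{deficits} $v:=1-w$ and $u:=K-m$. Both functions are continuous on the compact interval $[-a,a]$, hence attain their infimum; the boundary conditions in \eqref{eq:pbbox}, together with Assumption~\ref{ass} (which guarantees $w^*\in(0,1)$ and $m^*\in(0,K)$, see Appendix~\ref{appendix_reaction_terms}), give $v(\pm a)>0$ and $u(\pm a)>0$. Consequently, if either infimum fails to be strictly positive, it is attained at some interior point of $(-a,a)$.

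The key observation is a one-sided sign property of the reaction terms. Rewriting
\[ f_w(w,m)=w(1-w)+m(\mu-w)-\mu w, \]
Assumption~\ref{ass} (in particular $\mu<1$) makes every summand on the right nonpositive as soon as $w\geqslant 1$ and $m\geqslant 0$, with the last term $-\mu w$ strictly negative for $w>0$. Hence $f_w(w,m)<0$ whenever $w\geqslant 1$ and $m\geqslant 0$. Similarly, the factorization
\[ f_m(w,m)=m\Bigl(r-\mu-\tfrac{r}{K}m\Bigr)+w\Bigl(\mu-\tfrac{r}{K}m\Bigr) \]
already employed in the proof of Proposition~\ref{lem:samesolbox} has \emph{both} coefficients strictly negative as soon as $m\geqslant K$ (because $r>\mu$ and $r>1>\mu$ under Assumption~\ref{ass}), so $f_m(w,m)<0$ whenever $m\geqslant K$ and $w\geqslant 0$.

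To conclude, suppose $\inf_{[-a,a]} v \leqslant 0$ and let $x_0\in(-a,a)$ be an interior minimizer of $v$; then $v'(x_0)=0$ and $v''(x_0)\geqslant 0$, so $-cv'(x_0)-v''(x_0)\leqslant 0$. On the other hand, $-cv'-v''=-f_w(w,m)$, and at $x_0$ one has $w(x_0)=1-v(x_0)\geqslant 1$ together with $m(x_0)>0$ by Proposition~\ref{lem:samesolbox}; the sign analysis above then forces $-f_w(w(x_0),m(x_0))>0$, a contradiction. The analogous argument applied to $u$ (using $m(x_0)\geqslant K$ and $w(x_0)>0$) yields $m<K$ on $[-a,a]$. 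The main subtlety lies in the borderline case $v(x_0)=0$ (equivalently $w(x_0)=1$), where strict negativity $f_w(1,m(x_0))=-m(x_0)(1-\mu)-\mu<0$ is secured by the standalone term $-\mu<0$; this is precisely what allows us to bypass any appeal to a strong maximum principle or Harnack-type refinement.
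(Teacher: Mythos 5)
Your proof is correct. The sign computations at the heart of it are the same ones the paper uses (the rewriting $f_w(w,m)=w(1-w)+m(\mu-w)-\mu w$ and the factorization of $f_m$), but the way you deploy the maximum principle is genuinely different and somewhat leaner. The paper argues in two steps: first it applies the weak maximum principle on a connected component of $\{w\geqslant 1\}$ (resp.\ $\{m\geqslant K\}$) to get the non-strict bounds $w\leqslant 1$, $m\leqslant K$, and then invokes the strong maximum principle to rule out interior touching, evaluating $f_w(1,m(x_0))=-m(x_0)(1-\mu)-\mu<0$ to reach a contradiction. You instead apply a single second-derivative test at an interior minimizer of the deficit $v=1-w$ (resp.\ $u=K-m$): since $f_w$ is \emph{strictly} negative on the whole closed set $\{w\geqslant 1,\,m>0\}$ --- thanks to the standalone term $-\mu w$ --- the borderline case $w(x_0)=1$ is excluded at the same stroke as $w(x_0)>1$, and no strong maximum principle is needed. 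What your route buys is economy (one argument instead of two, and only the elementary fact that $v'=0$, $v''\geqslant 0$ at an interior minimum); what the paper's route buys is nothing essential here, though its component-based weak-maximum-principle phrasing is the template it reuses elsewhere (e.g.\ in Proposition~\ref{lem:samesolbox}). Two small points you rely on implicitly and could state: the indicator functions $\chi_{w\geqslant 0}\chi_{m\geqslant 0}$ in \eqref{eq:pbbox} are identically $1$ because the solution is assumed positive, and the $C^2$ regularity needed for the second-derivative test follows from Proposition~\ref{thm:regularity}; both are harmless and the paper makes the same implicit uses.
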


\begin{proof}[Proof of Proposition \ref{thm:precisebound}]
 Let $ (w,m) $ a positive solution of \eqref{eq:pbbox}.
 \begin{itemize}
  \item We assume that there exists $ x_0\in(-a, a) $ such that $ w(x_0)>1. $ Let then $ [a_1, a_2] $ the
  connex compound of the set $ \{w\geqslant 1\} $ that contains $ x_0. $ Then in $ (a_1, a_2) $ we have
  \begin{eqnarray*}
   -cw'-w''&=&w(1-\mu-w-m)+\mu m\leqslant w(-\mu-m)+\mu m\\
   &=&m(\mu-w)-\mu w\leqslant 0,
  \end{eqnarray*}
  along with $ w(a_1)=w(a_2)=1, $ so that the weak {maximum} principle states $ w\leqslant 1 $ in 
  $ (a_1, a_2), $ which is absurd because $ w(x_0)>1. $ Therefore, $ w(x)\leqslant 1$ for all $x\in(-a, a)$  
  
  \item We assume that there exists $ x_0\in(-a, a) $ such that $ m(x_0)>K. $ Let then $ [a_1, a_2] $ the
  connex compound of the set $ \{m\geqslant K\} $ that contains $ x_0. $ Then in $ (a_1, a_2) $ we have
  \begin{eqnarray*}
   -cm'-m''&=&m\left(r-\mu-\frac{r}{K}(w+m)\right)+\mu w\leqslant m\left(-\mu-\frac{rw}{K}\right)+
  \mu w \\
  &=&w\left(\mu-\frac{r}{K}m\right)-\mu m \leqslant 0,
  \end{eqnarray*}
  Thanks to Assumption~\ref{ass}. Since $ m(a_1)=m(a_2)=K$, the weak {maximum} principle states $ m\leqslant K $ in 
  $ (a_1, a_2), $ which is absurd because $ m(x_0)>K. $ Therefore, $ m(x)\leqslant K$ for all $x\in(-a, a)$.
  
  \item  Now if $ w(x)\in(\max(\mu,1-\mu), 1]$, we still have the estimate 
  \[ -cw'(x)-w''(x)\leqslant m(x)(\mu-w(x))+w(x)(1-\mu-w(x))\leqslant 0,\]
  so that if there exists $x_0\in(-a,a)$ such that $ w(x_0)=1$, then $ w $ is locally equal to $ 1 $ thanks to the strong 
  maximum principle. But in that case
  \[ 0=(-cw'-w'')(x_0)=-m(x_0)+\mu(m(x_0)-1)<0, \]
  which is absurd. Hence, $w<1$.
  Similarly, if $ m(x_0)=K$, we get
  \[ 0=(-cm'-m'')(x_0)=-K\mu+w(x_0)(\mu-r)<0, \]
  which is absurd, and thus $m<K$.
  
 \end{itemize}
 
\end{proof}

\subsubsection{Estimates  on solutions of \eqref{eq:pbbox} when $c\geq c^*$ or $c=0$}\label{subsubsection:c}

The next result shows that the solutions of \eqref{eq:pbbox} degenerate when
$ a\rightarrow +\infty $ 
if the speed $ c$ is larger than a minimal speed $c^*$ (see Theorem \ref{thm:main} for the definition of $c^*$).

\begin{prop}[Upper bound on $c$]
Let $r,\,K,\,\mu$ satisfy Assumption~\ref{ass}. There exists $C>0$ such that for $a>0$ and $ c\geqslant c_* $, any solution  $(w,m)\in C^0([-a,a])^2$ of \eqref{eq:pbbox} satisfies
$$\forall x\in [-a,a],\quad \max\left(w(x), m(x)\right)\leq Ce^{\frac{-c-\sqrt{c^2-c_*^2}}{2}(x+a)}.$$
\label{lem:refupperbound}
\end{prop}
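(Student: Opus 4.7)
The plan is a two–step reduction: a Perron–Frobenius projection converts the linearised cooperative system into a scalar inequality, and an exponential change of variables then cancels the zeroth–order term, reducing the problem to a classical scalar maximum principle.

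First, since $w,m>0$ on $[-a,a)$ by Proposition~\ref{lem:samesolbox}, I would discard the nonpositive nonlinear terms $-w(w+m)$ in $f_w$ and $-rm(w+m)/K$ in $f_m$ to obtain the componentwise linear subsolution inequality
\[ -c\begin{pmatrix} w\\m\end{pmatrix}' - \begin{pmatrix}w\\m\end{pmatrix}'' \leq M\begin{pmatrix}w\\m\end{pmatrix}, \qquad M:=\begin{pmatrix} 1-\mu & \mu\\ \mu & r-\mu\end{pmatrix}. \]
Under Assumption~\ref{ass} the matrix $M$ is symmetric with strictly positive entries, and its largest eigenvalue
\[ \lambda_+^M = \frac{1+r-2\mu+\sqrt{(r-1)^2+4\mu^2}}{2} \]
equals $c_*^2/4$ exactly by definition of $c_*$ in \eqref{eq:defminc}. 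Perron–Frobenius then supplies a strictly positive eigenvector $(\alpha,\beta)\in(0,\infty)^2$. Testing against it via $\varphi:=\alpha w+\beta m$ gives the scalar inequality $-c\varphi'-\varphi''\leq\lambda_+^M\varphi$ on $[-a,a]$, with boundary data $\varphi(-a)=\alpha w^*+\beta m^*$ and $\varphi(a)=0$.

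To extract the announced exponential decay, I would factor $\varphi(x)=e^{-\lambda_+(x+a)}\tilde\varphi(x)$, where $\lambda_+=(c+\sqrt{c^2-c_*^2})/2$ is the larger root of $\lambda^2-c\lambda+\lambda_+^M=0$ (real because $c\geq c_*$). The characteristic identity $c\lambda_+-\lambda_+^2=\lambda_+^M$ kills the zeroth–order term and the inequality on $\tilde\varphi$ becomes
\[ \tilde\varphi''-\sqrt{c^2-c_*^2}\,\tilde\varphi'\geq 0 \quad\text{on }(-a,a). \]
This is a second–order elliptic inequality with no zeroth–order coefficient, so the weak maximum principle forces $\tilde\varphi$ to attain its supremum at an endpoint; combined with $\tilde\varphi(-a)=\alpha w^*+\beta m^*$ and $\tilde\varphi(a)=0$, this gives $\tilde\varphi\leq\alpha w^*+\beta m^*$ throughout $[-a,a]$. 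Undoing the substitution and using $w\leq\varphi/\alpha$, $m\leq\varphi/\beta$ yields the announced bound with $C:=(\alpha w^*+\beta m^*)/\min(\alpha,\beta)$, depending only on $r,K,\mu$.

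The step I expect to require the most care is arranging the scalar reduction so that \emph{no} zeroth–order term survives in the rescaled inequality: the Perron–Frobenius combination $\alpha w+\beta m$ is precisely what makes this possible, together with the algebraic coincidence $\lambda_+^M=c_*^2/4$ that pins the threshold speed to the one in \eqref{eq:defminc}. Without this scalar projection one would be forced to work with a Schrödinger–type operator whose principal eigenvalue depends on $a$, or to invoke a comparison principle for cooperative ODE systems applied to the vectorial ansatz $C(\alpha,\beta)e^{-\lambda_+(x+a)}$; both routes lead to the same estimate but are less self–contained.
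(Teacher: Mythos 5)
Your proof is correct, but it takes a genuinely different route from the paper's. Both arguments hinge on the same two ingredients: dropping the nonpositive quadratic terms to get the componentwise inequality $-c(w,m)'-(w,m)''\leq M(w,m)^T$ with the same matrix $M$, and the algebraic identity $4h_+=c_*^2$ relating the Perron eigenvalue of $M$ to the minimal speed. The paper then builds a \emph{vector} supersolution $\psi_\eta=\eta X e^{\lambda_- x}$ from the right Perron eigenvector and runs a sliding argument: it takes the infimum $\eta_0$ of admissible $\eta$, examines the resulting touching point component by component, and uses the weak maximum principle on each difference $w-(\psi_{\eta_0})_1$, $m-(\psi_{\eta_0})_2$ to force the contact to occur at $x=-a$, which pins down $\eta_0$ via the boundary data. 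You instead contract against the \emph{left} Perron eigenvector (which coincides with the right one because $M$ is symmetric, a point worth stating explicitly since positivity of $\alpha,\beta$ is what lets you add the two componentwise inequalities), obtaining a single scalar inequality for $\varphi=\alpha w+\beta m$; the exponential conjugation then removes the zeroth-order term and a single application of the scalar weak maximum principle finishes the proof, with the componentwise bound recovered from $w\leq\varphi/\alpha$, $m\leq\varphi/\beta$ thanks to the positivity of $w$ and $m$ from Proposition~\ref{lem:samesolbox}. Your version avoids the sliding and the case analysis at the touching point and gives the constant explicitly; its only structural dependence is on having a positive left eigenvector, which would survive even for a non-symmetric cooperative matrix, so nothing essential is lost relative to the paper's argument.
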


\begin{proof}[Proof of Proposition \ref{lem:refupperbound}]
 Let $ c\geqslant c_*$, and
\[ M:=\left(\begin{matrix} 1-\mu & \mu \\ \mu & r-\mu \end{matrix}\right) .\]
Since $M+\mu\, Id$ is a positive matrix, the Perron-Frobenius theorem implies that $M$ has a principal eigenvalue $h^+$ and a positive principal eigenvector $X$ (that is $X_i>0$ for $i=1,2$), given by
\begin{eqnarray}
 &h_+=\frac{1+r-2\mu+\sqrt{(1-r)^2+4\mu^2}}{2},\quad X=\left(\begin{matrix} 1-r+\sqrt{(1-r)^2+4\mu^2} \\ 2\mu \end{matrix}\right).&\label{def:hX}
\end{eqnarray}
The function $\psi_\eta(x):=\eta Xe^{\lambda_-x} $ with $\lambda_-:=\frac{-c-\sqrt{c^2-c_*^2}}{2}$ and $ \eta>0 $ is then a solution of the equation
\[ -c\psi_\eta'-\psi_\eta''=M\psi_\eta=h_+\psi_\eta. \]
We can define $ \mathcal A=\{\eta, (\psi_\eta)_1\geqslant w\textrm{ on }[-a,a]\}\cap\{\eta, (\psi_\eta)_2\geqslant m\textrm{ on }[-a,a]\} $, which is a closed subset of $ \mathbb R^+$. $\mathcal A$ is non-empty since $w$ and $m$ are bounded while $\left(Xe^{\lambda_-x}\right)_i \geq X_ie^{\lambda_-a}>0$ for $i=1,\,2$. 

Consider now $ \eta_0:=\inf\mathcal A$. Then $ (\psi_\eta)_1\geq w$, $ (\psi_\eta)_2\geq m$, and there exists $ x_0\in [-a,a] $ such that either $ (\psi_\eta)_1(x_0)=w(x_0) $ or $ (\psi_\eta)_2(x_0)=m(x_0)$. We first consider the case where 
$ (\psi_\eta)_1(x_0)=w(x_0)$. Then 
\[ -c(w-(\psi_\eta)_1)'(x_0)-(w-(\psi_\eta)_1)''(x_0)\leqslant-w(x_0)\left(w(x_0)+m(x_0)\right)\leqslant 0 \]
over $ [-a, a]$. The weak maximum principle (\cite{GT98}, theorem 8.1) implies that
\[ \underset{[-a, a]}{\sup}(w-(\psi_\eta)_1)=\max((w-(\psi_\eta)_1)(-a), (w-(\psi_\eta)_1)(a)), \]
and then, thanks to the definition of $ \eta_0$, $\underset{[-a, a]}{\sup}(w-(\psi_\eta)_1)=0$. Since $ w(a)=0<(\psi_\eta)_1(a), $ this means that $ (\psi_\eta)_1(-a)=w(-a) $, and thus
\[ \eta_0=\frac{b_w^-}{1-r+\sqrt{(1-r)^2+4\mu^2}}e^{\lambda_-a}. \]
The argument is similar if $ (\psi_\eta)_2(x_0)=m(x_0)$, which concludes the proof.
\end{proof}

%

The following Proposition will be used to show that $ c\neq 0$.
\begin{prop}
Let  $r,\,K,\,\mu$ satisfy Assumption~\ref{ass}, and $a >a_0:= \frac{\pi}{\sqrt{2(1-\mu)}}$. Every positive solution $ (w,m)\in C^0([-a,a])^2$ of \eqref{eq:pbbox} with $c=0$ satisfies the estimate
\begin{equation}\label{granden0} 
\underset{[-a_0, a_0]}{\max}(w+m)\geqslant \frac{K}{2}(1-\mu). 
 \end{equation}
\label{lem:nosmallc}
\end{prop}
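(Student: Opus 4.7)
The approach is a standard contradiction argument built on the principal Dirichlet eigenpair on $(-a_0, a_0)$. The crucial observation is that $a_0 = \pi/\sqrt{2(1-\mu)}$ is calibrated so that the first Dirichlet eigenvalue of $-\partial_x^2$ on this interval equals $\lambda_1 := \pi^2/(4 a_0^2) = (1-\mu)/2$, with positive principal eigenfunction $\phi_1(x) := \cos(\pi x/(2 a_0))$.

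I would suppose for contradiction that $\max_{[-a_0, a_0]}(w+m) < K(1-\mu)/2$. From the second equation of \eqref{eq:pbbox} with $c=0$, discarding the nonnegative term $\mu w$ and inserting the contradiction hypothesis pointwise, one obtains on $(-a_0, a_0)$
\[
-m'' \;\geq\; m\left(r - \mu - \frac{r(w+m)}{K}\right) \;>\; \lambda\, m, \qquad \lambda := \frac{r(1+\mu) - 2\mu}{2}.
\]
A direct computation gives $\lambda - \lambda_1 = (r-1)(1+\mu)/2 > 0$, so the strict spectral gap $\lambda > \lambda_1$ is secured exactly by the hypothesis $r > 1$ from Assumption~\ref{ass}.

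To conclude, I would multiply the strict inequality above by $\phi_1 > 0$ and integrate twice by parts over $[-a_0, a_0]$. Using $\phi_1(\pm a_0) = 0$, the boundary contributions collapse to $m(a_0)\phi_1'(a_0) - m(-a_0)\phi_1'(-a_0)$, which is strictly negative since $\phi_1'(-a_0) > 0$, $\phi_1'(a_0) < 0$, and $m(\pm a_0) > 0$ (the latter because $[-a_0,a_0]\subset[-a,a)$ and Proposition~\ref{lem:samesolbox} gives $m > 0$ on $[-a, a)$). Therefore
\[
\int_{-a_0}^{a_0} (-m'')\phi_1\,dx \;<\; \lambda_1 \int_{-a_0}^{a_0} m\phi_1\,dx,
\]
which combined with the pointwise inequality $-m'' > \lambda m$ and the strict positivity of $\int m\phi_1$ yields $\lambda < \lambda_1$, contradicting $\lambda > \lambda_1$.

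The argument presents no serious obstacle; it is essentially a clean Rayleigh-type/eigenvalue comparison. The role of Assumption~\ref{ass} reduces to supplying the spectral gap through $r > 1$, and the positivity results of Subsection~\ref{subsubsection:Linfty} are what license the strict sign in the boundary term. One could alternatively run the same argument on the $w$-equation, in which case the needed inequality would be $K < 1$ instead of $r > 1$, but the $m$-equation feels more natural since the bound $K(1-\mu)/2$ has the factor $K$ built in.
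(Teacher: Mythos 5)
Your proof is correct. It rests on the same two pillars as the paper's: argue by contradiction, and exploit that $a_0=\pi/\sqrt{2(1-\mu)}$ is tuned so that the principal Dirichlet eigenvalue of $-\partial_x^2$ on $(-a_0,a_0)$ is exactly $(1-\mu)/2$, which the contradiction hypothesis forces to be exceeded by the effective linear growth rate. Where you diverge is in how the comparison is executed. The paper keeps everything at the level of the maximum principle: it takes $\psi_A(x)=A\cos\bigl(\sqrt{(1-\mu)/2}\,x\bigr)$, slides $A$ up to the largest value $A_0$ for which $\psi_{A_0}\leq\min(w,m)$, shows $-(w-\psi_{A_0})''\geq\frac{1-\mu}{2}(w-\psi_{A_0})\geq 0$ and likewise for $m$ (so it must control \emph{both} equations, using $K<1$ for the $w$-inequality and $r>1$ for the $m$-inequality, since the touching point could occur on either component), and then derives the contradiction from the weak minimum principle and the strict positivity of $w,m$ at $\pm a_0$ where $\psi_{A_0}$ vanishes. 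You instead test a single equation against the eigenfunction and integrate by parts twice, so the contradiction comes from the sign of the boundary flux $m(a_0)\phi_1'(a_0)-m(-a_0)\phi_1'(-a_0)<0$ rather than from a touching point. Your route is arguably leaner: it needs only one of the two equations (hence only one of the hypotheses $r>1$ or $K<1$, as you note), and it dispenses with the sliding set $\mathcal A$. The paper's route avoids any integration and stays uniformly within the maximum-principle toolkit used throughout Section~\ref{section:proof_box}. Two small points to make explicit if you write this up: the integration by parts requires $m\in C^2$, which is licensed by Proposition~\ref{thm:regularity} (the indicator functions in \eqref{eq:pbbox} are identically $1$ for a positive solution); and the strict positivity $m(\pm a_0)>0$ you invoke does indeed follow from Proposition~\ref{lem:samesolbox} together with $a_0<a$.
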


\begin{proof}[Proof of Proposition \ref{lem:nosmallc}]
We assume that  $c=0$, $a>a_0$, and that \eqref{granden0} does not hold. We want to show that those assumptions lead to a contradiction.
For $ A\geqslant 0, $ the function defined by
\[ \psi_A(x)=A\cos\left(\sqrt{\frac{1-\mu}{2}}x\right), \]
is a solution of the equation $-\psi_A''=\frac{1-\mu}{2}\psi_A$ over $[-a_0,a_0]$. Since $ w,m>0 $ over $ [-a_0, a_0]$ and are bounded, the set  $\mathcal A:=\{A, \forall x\in[-a_0, a_0], \psi_A(x)\leqslant \min(w(x),m(x))\}$ is a closed bounded nonempty set in $(0, +\infty)$. 
Let now $ A_0 :=\max\mathcal A. $ 
We still have
 $ \psi_{A_0}\leqslant\min(w,m) $ over $ [-a_0, a_0]$, and then,  since \eqref{granden0} does not hold and $K<1$,
 \begin{eqnarray}
  -(w-\psi_{A_0})''&\geqslant& (1-\underset{[-a_0, a_0]}{\max}(w+m)-\mu)w-\frac{1-\mu}{2}\psi_{A_0}\label{eq1}\\
  &\geqslant&
\frac{1-\mu}{2}(w-\psi_{A_0})\geqslant 0.\nonumber
 \end{eqnarray}
Similarly, using additionally that $r>1$,
\[ -(m-\psi_{A_0})''\geqslant \frac{1-\mu}{2}(m-\psi_{A_0})\geqslant 0. \]
The weak {minimum} principle (\cite{GT98}, theorem 8.1) then imposes
\begin{align*}
& \min\left(\underset{[-a_0, a_0]}{\inf}(w-\psi_{A_0}),\underset{[-a_0, a_0]}{\inf}(m-\psi_{A_0})\right)\\
&\quad =\min((w-\psi_{A_0})(-a_0), (w-\psi_{A_0})(a_0), (m-\psi_{A_0})(-a_0), (m-\psi_{A_0})(a_0)).
\end{align*}
But the left side of the equation is $ 0 $ by definition of $ A_0, $ while the right side is strictly positive
since $ \psi_{A_0}(-a_0)=\psi_{A_0}(a_0)=0$. This contradiction shows the result.
\end{proof}

%
%

\begin{Rk}\label{Rk:gene_subsection_c}
 Notice that Propositions~\ref{thm:regularity}, \ref{lem:samesolbox},\ref{thm:precisebound}, \ref{lem:refupperbound} and \ref{lem:nosmallc} also holds if $(c,w,m)\in \mathbb R\times C^0([-a,a])$ is a solution of

\begin{equation}
\left\{\begin{array}{l}
w,\, m\in C^0([-a, a]),\\
-cw'-w''=\left(w(1-(w+\sigma m))+\mu(\sigma m-w)\right)\chi_{w\geq 0}\chi_{m\geq 0}, \\
-cm'-m''=\left(rm\left(1-\left(\frac{\sigma w+m}{K}\right)\right)+\mu(\sigma w-m) \right)\chi_{w\geq 0}\chi_{m\geq 0}, \\
w(-a)=w^*,\, m(-a)=m^*,\, w(a)=m(a)=0,
\end{array}\right. \label{eq:pbbox_sigma}
\end{equation}
where $\sigma\in[0,1]$.
\end{Rk}

\subsection{Existence of solutions to a localized problem}\label{subsection:existence_localized}

To show the existence of travelling waves solutions of \eqref{systemefront}, we will follow the approach of \cite{ACR}. The first step is to show the existence of solutions of \eqref{eq:pbbox} satisfying the additional normalization property $\underset{[-a_0, a_0]}{\max}(w+m)={\nu_0}$, that is the existence of a solution $(c,w, m)$ to
\begin{equation}
\left\{\begin{array}{l}
(c,w, m)\in\mathbb R\times C^0([-a, a])^2,  \\
-cw'-w''=f_w(w, m)\chi_{w\geqslant 0}\chi_{m\geqslant 0}, \\
-cm'-m''=f_m(w, m)\chi_{w\geqslant 0}\chi_{m\geqslant 0},  \\
w(-a)=w^*,\, m(-a)=m^*,\, w(a)=m(a)=0,\\
\underset{[-a_0, a_0]}{\max}(w+m)={\nu_0},
\end{array}\right.  \label{pbnormbox}
\end{equation}
where $f_w,\,f_m$ are defined by \eqref{def_f}, { $\nu_0=\min\left(\frac{K}{4}(1-\mu), \frac{w^*+m^*}{2}\right) $}  and $w^*,\,m^*$ are defined in Appendix \ref{appendix_reaction_terms}. 


 We introduce next the Banach space $(X,\Vert\cdot\Vert_X)$, with $ X:=\mathbb R\times C^0([-a, a])^2$ and $\Vert(c, w, m)\Vert_X := \max(|c|, \underset{[-a, a]}{\sup} |w|, \underset{[-a, a]}{\sup} |m|)$. We also define the operator 
\begin{equation}\label{Ksigma}
  \begin{array}{rccl}
    K^\sigma: & X & \longrightarrow & \qquad X, \\
         & (c,w,m) & \longmapsto & (c+\underset{[-a_0, a_0]}{\max}(\tilde w+\tilde m)-{\nu_0}, \tilde w, 
         \tilde m)
   \end{array} 
\end{equation}
where $ (\tilde w, \tilde m)\in C^0([-a, a])^2 $ is the unique 
solution of
\begin{equation*}
 \left\{\begin{array}{l} -c\tilde w'-\tilde w''= \left[w(1-(w+\sigma m))+\mu(\sigma m-w)\right]\chi_{w\geqslant 0}\chi_{m\geqslant 0}\textrm{ on } (-a, a), \\ 
-c\tilde m'-\tilde m''=\left[rm\left(1-\left(\frac{\sigma w+m}{K}\right)\right)+\mu(\sigma w-m)\right]\chi_{w\geqslant 0}\chi_{m\geqslant 0}  \textrm{ on } (-a, a), \\
\tilde w(-a)=w^*,\,\tilde m(-a)=m^*,\,  \tilde w(a)=\tilde m(a)=0.
\end{array}\right. 
\end{equation*}

The solutions of \eqref{pbnormbox} with $ c\geqslant 0 $  are then the fixed points of $ K^1 $ in the domain $\{(c, w, m), 0\leqslant w\leqslant 1, 0\leqslant m\leqslant K, c\geqslant 0\}$.

We define
\[ \Omega:=\left\{(c, w, m)\in \mathbb R_+\times C^0([-a,a])^2;\; c\in (0,c_*),\,\forall x\in[-a,a],\,-1<w(x)<1,\, -K<m(x)<K \right\},  \]
where $ c_* $ is defined by \eqref{eq:defminc}.

\begin{lem}\label{Lemma:continuous_operator_family}

Let  $r,\,K,\,\mu$ satisfy Assumption~\ref{ass}, and $a >0$. Then, $(K^\sigma)_{\sigma\in[0, 1]}$, 
defined by \eqref{Ksigma}, is a family of compact operators on $(X,\Vert\cdot\Vert_X)$, that is continuous with respect to 
$\sigma\in[0, 1]$.
\end{lem}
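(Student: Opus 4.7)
The plan is to factor $K^\sigma$ through the linear Dirichlet solution operator and combine standard elliptic regularity with Arzel\`a--Ascoli. For $(c,w,m) \in X$, let $N^\sigma(c,w,m) \in L^\infty([-a,a])^2$ denote the right-hand side vector of the system defining $(\tilde w, \tilde m)$, and let $T_c$ denote the linear operator sending a source $(g_1,g_2)$ to the solution pair of the associated Dirichlet problem with drift $-c\partial_x$ and boundary data $(w^*, m^*)$ at $-a$ and $(0,0)$ at $a$. Then
\[ K^\sigma(c,w,m) = \bigl(c + \max_{[-a_0,a_0]}(\tilde w + \tilde m) - \nu_0,\ \tilde w,\ \tilde m\bigr) \quad\text{with}\quad (\tilde w,\tilde m) = T_c\bigl(N^\sigma(c,w,m)\bigr). \]
Existence, uniqueness and a $W^{2,p}$ estimate for $T_c$, uniform in $c$ on bounded subsets of $\mathbb R$, are provided by Theorem~9.15 of \cite{GT98}, already invoked in Proposition~\ref{thm:regularity}.

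For compactness of each $K^\sigma$, fix a bounded set $B \subset X$. Since $w, m$ are uniformly bounded in $C^0([-a,a])$ on $B$, the image $N^\sigma(B)$ is uniformly bounded in $L^\infty \subset L^p$ for every $p \in (1,\infty)$. Choosing any such $p$, the $W^{2,p}$ estimate delivers a uniform bound on $(\tilde w, \tilde m)$ in $W^{2,p}$. In one space dimension, the embedding $W^{2,p}([-a,a]) \hookrightarrow C^{1,\alpha}([-a,a])$ with $\alpha = 1 - 1/p$ together with Arzel\`a--Ascoli yield relative compactness of $\{(\tilde w, \tilde m)\}$ in $C^0([-a,a])^2$. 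Since $c$ is uniformly bounded and $\max_{[-a_0,a_0]}(\tilde w + \tilde m)$ is bounded on $B$, the first component of $K^\sigma(c,w,m)$ lies in a compact subset of $\mathbb R$. Hence $K^\sigma(B)$ is relatively compact in $X$.

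For continuity of each $K^\sigma$ and of $\sigma \mapsto K^\sigma$, the linear solution map $(c,g_1,g_2) \mapsto T_c(g_1,g_2)$ is continuous from $\mathbb R \times L^p([-a,a])^2$ into $W^{2,p}([-a,a])^2$, and then into $C^0([-a,a])^2$ by Sobolev embedding; the map $(\tilde w, \tilde m) \mapsto \max_{[-a_0,a_0]}(\tilde w + \tilde m)$ is Lipschitz on $C^0$. What remains is the continuity of $(c,w,m,\sigma) \mapsto N^\sigma(c,w,m)$ from $X \times [0,1]$ into $L^p$. The polynomial expressions inside the brackets depend continuously in $C^0$-norm on $(w,m,\sigma)$. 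For the cutoff $\chi_{w\geq 0}\chi_{m\geq 0}$, uniform convergence $(w_n,m_n) \to (w,m)$ forces pointwise convergence of the indicators off the level sets $\{w = 0\} \cup \{m=0\}$, and since the whole integrand is uniformly bounded, dominated convergence yields convergence in $L^p$. Dependence on $\sigma$ is easier since $\sigma$ enters the source only polynomially. The main technical nuisance of the argument, and the reason we must work in $L^p$ with $p<\infty$ rather than $L^\infty$ at this step, is precisely the discontinuity of these indicator cutoffs.
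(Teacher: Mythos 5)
Your factorization of $K^\sigma$ through the Dirichlet solver, together with the compactness argument via uniform elliptic estimates (equivalently the $C^{1,\alpha}$ bounds of Lemma~\ref{lem:ellipticestimates}) and Arzel\`a--Ascoli, is exactly the route the paper takes, which delegates these points to Lemmas~\ref{lem:ellipticestimates} and \ref{lem:degreecompacity}. The divergence, and the gap, lies in your treatment of the cutoffs $\chi_{w\geq 0}\chi_{m\geq 0}$. Your dominated-convergence step requires pointwise a.e.\ convergence of the source terms, and you only obtain it off the level sets $\{w=0\}\cup\{m=0\}$; for a general element $(c,w,m)\in X$ these sets can have positive Lebesgue measure, and there the argument genuinely fails. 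Concretely, take $w_n\equiv -1/n$, $m_n\equiv m_0$ for a constant $m_0\in(0,K)$, and $c_n\equiv c$. Then $(c_n,w_n,m_n)\to(c,0,m_0)$ in $X$, the source term for $\tilde w_n$ vanishes identically (since $\chi_{w_n\geq 0}\equiv 0$), whereas the source term at the limit equals $f_w(0,m_0)\chi_{0\geq 0}\chi_{m_0\geq 0}=\mu m_0>0$ on all of $[-a,a]$. The corresponding Dirichlet solutions differ, so the operator with the indicators as written in \eqref{Ksigma} is not continuous at $(c,0,m_0)$: no choice of $p<\infty$ and no convergence theorem can rescue this step, because the discontinuity is real, not an artifact of the function space.

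To be fair, this is a delicate point of the paper itself: in the paper's proof the map $\mathcal F^\sigma$ is written \emph{without} the indicators, which restores continuity (the nonlinearity is then polynomial in $(w,m,\sigma)$, hence Lipschitz on bounded sets of $C^0$ into $L^\infty$, with no need for dominated convergence) but does not literally match the definition \eqref{Ksigma}. To make your argument airtight you would need either to drop the cutoff at this stage and recover positivity of fixed points separately, or to replace $\chi_{w\geq 0}\chi_{m\geq 0}$ by a continuous truncation (for instance evaluating the nonlinearity at $(\max(w,0),\max(m,0))$). As written, the assertion that ``dominated convergence yields convergence in $L^p$'' is false precisely at the points where the cutoff is active.
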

\begin{proof}[Proof of Lemma \ref{Lemma:continuous_operator_family}]
We can write $ K^\sigma=(\mathcal L_D)^{-1}\circ \mathcal F^\sigma $ where $ (\mathcal L_D)^{-1} $ is defined by
\[ (\mathcal L_D)^{-1}(c, g, h)=(\tilde c, \tilde w, \tilde m), \]
 where $ (\tilde c, \tilde w, \tilde m) $ is the unique solution of
\[ \left\{\begin{array}{l} -c\tilde w'-\tilde w''=g\textrm{ on } (-a, a), \\ 
                           -c\tilde m'-\tilde m''=h\textrm{ on }(-a, a), \\ 
                           \tilde w(-a)=w^*,\, \tilde m(-a)=m^*,\,\tilde w(a)=\tilde m(a)=0, \\
                           \tilde c=c+\underset{[-a_0, a_0]}{\max}(\tilde w+ \tilde m) - {\nu_0},
\end{array}\right. \]
and $ \mathcal F^\sigma $ is the mapping 
\[ \mathcal F^\sigma(c, w, m)=\left(c, w(1-(w+\sigma m))+\mu(\sigma m-w),rm\left(1-\frac{\sigma w+m}{K}\right)+\mu(\sigma w-m)\right). \]
$ \sigma\mapsto\mathcal F^\sigma$ is a continuous mapping from $ [0, 1] $ to $ C^0(\Omega, X)$, and $ (\mathcal L_D)^{-1} $ is a continuous application from $(X,\Vert\cdot\Vert_X)$ into itself (see Lemma \ref{lem:degreecompacity}), it then follows that $ \sigma\mapsto K^\sigma=(\mathcal L_D)^{-1}\circ \mathcal F^\sigma$ is a continuous mapping from $ [0, 1] $ to 
$ C^0(\Omega, X)$. Finally, the operator $(\mathcal L_D)^{-1}$ is compact (see Lemma \ref{lem:degreecompacity}), which implies that $ K^\sigma$ is compact for any fixed $ \sigma\in [0,1] $. 
\end{proof}

We now introduce the following operator, for $\sigma\in[0,1]$:
\begin{equation}\label{lemma:Ftau}
 F^\sigma:=Id-K^\sigma.
\end{equation}

Similarly, we introduce the operator
\begin{equation}\label{Ktau}
  \begin{array}{rccl}
    K_\tau: & X & \longrightarrow & \qquad X, \\
         & (c,w,m) & \longmapsto & (c+\underset{[-a_0, a_0]}{\max}(\tilde w+\tilde m)-{\nu_0}, \tilde w, 
         \tilde m)
   \end{array} 
\end{equation}
where $ (\tilde w, \tilde m)\in C^0([-a, a])^2 $ is the unique 
solution of
\begin{equation}\label{def_tilde_w_m} \left\{\begin{array}{l} -c\tilde w'-\tilde w''=\tau w(1-\mu -w)\chi_{w\geqslant 0}\chi_{m\geqslant 0} \textrm{ on } (-a, a), \\ 
-c\tilde m'-\tilde m''=\tau rm\left(1-\qg{\frac{\mu}{r}} -\frac mK\right) \chi_{w\geqslant 0}\chi_{m\geqslant 0} \textrm{ on } (-a, a), \\
\tilde w(-a)=w^*,\,\tilde m(-a)=m^*,\,  \tilde w(a)=\tilde m(a)=0.
\end{array}\right. 
\end{equation}
The argument of Lemma~\ref{Lemma:continuous_operator_family} can be be reproduced to prove that $(K_\tau)_{\tau\in[0,1]}$ is also a continuous family of compact operators on $(X,\|\cdot\|_X)$, and we can define, for $\tau\in[0,1]$, the operator
\begin{equation}\label{lemma:Ftau2}
 F_\tau:=Id-K_\tau.
\end{equation}

Finally, we introduce, for some $\bar c<0$ that we will define later on,
\[ \tilde\Omega:=\left\{(c, w, m)\in \mathbb R_+\times C^0([-a,a])^2;\; c\in (\bar c,c_*),\,\forall x\in[-a,a],\,-1<w(x)<1,\, -K<m(x)<K \right\}.\]

In the next Lemma, we will show that the Leray-Schauder degree of $F_0$ in the domain $\tilde \Omega$ is non-zero as soon as $a>0$ is large enough. We refer to chapter 12 of \cite{Smo} or to  chapter 10-11 of \cite{Brown} for more on the Leray-Schauder degree.
\begin{lem}
Let  $r,\,K,\,\mu$ satisfy Assumption~\ref{ass}. There exists $\bar a>0$ such that the Leray-Schauder degree of $F_0$ in the domain $\tilde \Omega$ is non-zero as soon as $a\geqslant \bar a$. 
\label{nonzerodegreeF0}
\end{lem}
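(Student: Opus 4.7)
The proof will exploit the particularly simple structure of $K_0$. Setting $\tau=0$ in \eqref{def_tilde_w_m} kills the right-hand sides, so $\tilde w_c$ and $\tilde m_c$ reduce to the unique solutions of the homogeneous equations $-c\tilde w'-\tilde w''=0$ and $-c\tilde m'-\tilde m''=0$ with the prescribed Dirichlet data $(w^*,0)$ and $(m^*,0)$. These can be written explicitly: for $c\neq 0$,
\[
\tilde w_c(x)=w^*\,\frac{e^{-cx}-e^{-ca}}{e^{ca}-e^{-ca}}, \qquad \tilde m_c(x)=m^*\,\frac{e^{-cx}-e^{-ca}}{e^{ca}-e^{-ca}},
\]
and a direct check shows both functions are strictly decreasing in $x$, with $\tilde w_c\in[0,w^*]\subset(-1,1)$ and $\tilde m_c\in[0,m^*]\subset(-K,K)$ pointwise. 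Thus the image of $K_0$ lies on a one-parameter curve in $X$, and the fixed-point equation $F_0(c,w,m)=0$ reduces to the scalar condition $\Phi(c):=(\tilde w_c+\tilde m_c)(-a_0)=\nu_0$ together with $(w,m)=(\tilde w_c,\tilde m_c)$.

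The first step is to locate the values of $c$ for which $\Phi(c)=\nu_0$. From the explicit formula, $\Phi$ is continuous on $\mathbb R$, with $\Phi(0)=(w^*+m^*)(a+a_0)/(2a)>\nu_0$ for every $a>a_0$ (using $\nu_0\leqslant(w^*+m^*)/2$), while $\Phi(c_*)\sim (w^*+m^*)e^{-c_*(a-a_0)}\to 0$ as $a\to\infty$. Choose $\bar a>a_0$ large enough that $\Phi(c_*)<\nu_0$ for every $a\geqslant\bar a$, and pick $\bar c<0$ of small absolute value so that $\Phi(\bar c)>\nu_0$ (possible by continuity at $0$). The intermediate value theorem then produces at least one $c^{**}\in(0,c_*)$ with $\Phi(c^{**})=\nu_0$, and the corresponding fixed point $(c^{**},\tilde w_{c^{**}},\tilde m_{c^{**}})$ lies in the interior of $\tilde\Omega$.

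The plan to compute the degree is to deform $K_0$ through the compact homotopy
\[
K_0^t(c,w,m):=\bigl(c+\Phi(c)-\nu_0,\,(1-t)\tilde w_c,\,(1-t)\tilde m_c\bigr), \qquad t\in[0,1],
\]
which interpolates between $K_0^0=K_0$ and $K_0^1(c,w,m)=(c+\Phi(c)-\nu_0,0,0)$. No zero of $Id-K_0^t$ reaches $\partial\tilde\Omega$: the $w$- and $m$-components of any fixed point of $K_0^t$ stay strictly inside $(-1,1)$ and $(-K,K)$ by the pointwise bounds above, while on the $c$-faces $\{\bar c,c_*\}$ the required identity $\Phi(c)=\nu_0$ fails by construction. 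Homotopy invariance then gives $\deg(F_0,\tilde\Omega,0)=\deg(Id-K_0^1,\tilde\Omega,0)$. But $Id-K_0^1$ has the product form $(c,w,m)\mapsto(\nu_0-\Phi(c),w,m)$, so the standard product formula reduces its Leray--Schauder degree to $\deg_{\mathrm B}(\nu_0-\Phi,(\bar c,c_*),0)\cdot\deg_{\mathrm{LS}}(Id,U_{w,m},(0,0))$, the second factor being $1$. Since $\nu_0-\Phi$ is continuous with $\nu_0-\Phi(\bar c)<0<\nu_0-\Phi(c_*)$, its Brouwer degree on $(\bar c,c_*)$ equals $+1$, whence $\deg(F_0,\tilde\Omega,0)=+1\neq 0$.

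The main technical point is the sharp comparison $\Phi(c_*)<\nu_0<\Phi(\bar c)$, which relies on the explicit formulas and the choice of a sufficiently large $\bar a$; the remaining ingredients (admissibility of the homotopy on $\partial\tilde\Omega$ and the product reduction for the Leray--Schauder degree, which follows from finite-rank approximation and the multiplicativity of the Brouwer degree) are standard once the uniform bounds on $\tilde w_c,\tilde m_c$ are in place.
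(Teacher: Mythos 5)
Your proof is correct and follows essentially the same route as the paper: exploit that $K_0$ has constant $(w,m)$-output $(w_c,m_c)$ given explicitly, exclude zeros of $F_0$ from $\partial\tilde\Omega$ via the bounds $\Phi(c_*)<\nu_0<\Phi(c)$ for $c\leq 0$ and $a$ large, then homotope to a variable-separated map and apply the product formula (the paper freezes the second and third components at $(w_{c_0},m_{c_0})$ where $\Phi(c_0)=\nu_0$, while you shrink them to $(0,0)$ — an immaterial difference, and your version avoids needing uniqueness of $c_0$). One small caveat: the constant $\bar c<0$ defining $\tilde\Omega$ is fixed later in the paper (and is taken with $|\bar c|$ large), so you should not choose it yourself to be near $0$; instead observe that $\Phi$ is decreasing (or that $(w_c,m_c)\geq (w_0,m_0)$ pointwise for $c\leq 0$), hence $\Phi(\bar c)\geq\Phi(0)>\nu_0$ for \emph{every} $\bar c<0$, which is exactly what the paper does and makes your boundary exclusion valid for whatever $\bar c$ is ultimately imposed.
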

\begin{proof}[Proof of Lemma \ref{nonzerodegreeF0}]

We first notice that for $\tau=0$, the solution $(\tilde w,\tilde m)$ of \eqref{def_tilde_w_m} is independent of $(w,m)$, and then, \[F_0(c,w,m)=\left({\nu_0}-\underset{[-a_0, a_0]}{\max}(w_{c}+m_{c}),w-w_c,m-m_c\right),\]
where $(w_{c},m_{c})$ is the solution of \eqref{def_tilde_w_m} with $\tau=0$, that is 
\[\left(w_c,m_c\right)(x):=\left(w^*\left(\frac{e^{-cx}-e^{-ca}}{e^{ca}-e^{-ca}}\right),m^*\left(\frac{e^{-cx}-e^{-ca}}{e^{ca}-e^{-ca}}\right)\right),\]
for $c\neq 0$, and $(w_c,m_c)(x)=(\frac{a-x}{2a}w^*,\frac{a-x}{2a}m^*)$ for $c=0$. 
The solutions of $F_0(c, w, m)=0$ then satisfy $w=w_c$ and $m=m_c$. In particular, the solutions of $F_0(c, w, m)=0$ satisfy $0<w<1$ and $0<m<K$ on $[-a,a)$, and then, 
\begin{eqnarray*}
 (c, w, m)&\notin&\left\{(\tilde c, \tilde w, \tilde m)\in \mathbb R\times C^0([-a,a])^2;\; \exists x\in[-a, a],\,  \tilde w(x)\in\{-1, 1\}\right\} \\
 &&\cup \left\{(\tilde c, \tilde w, \tilde m)\in \mathbb R\times C^0([-a,a])^2;\; \exists x\in[-a, a],\, \tilde m(x)\in\{-K, K\}\right\}\Big).
\end{eqnarray*}
The solutions of $F_0(c_*, w, m)=0$ also satisfy
\[\underset{[-a_0, a_0]}{\max}(w_{c_*}+m_{c_*})\leq 2\frac {e^{c_*a_0}}{e^{c_*a}-1},\]
so that $ \underset{[-a_0, a_0]}{\max}(w_{c_*}+m_{c_*})<
{\nu_0}$ if $a>\bar a$ for some $\bar a>0$. It follows that $F_0=0$ has no solution in $\overline{\tilde \Omega}\cap\left(\{c^*\}\times C^0([-a,a])^2\right)$, provided $a>\bar a$. 
Finally, for $c\leq 0$, the solutions of $F_0(c, w, m)=0$ satisfy $(w_c,m_c)(x)\geq (w_0,m_0)(x)=\left(-\frac{w^*}{2a}x+\frac{w^*}{2},-\frac{m^*}{2a}x+\frac{m^*}{2}\right)$, so that 
\[ \underset{[-a_0, a_0]}{\max}(w_c+m_c)>\underset{[-a_0, a_0]}{\max}(w_0+m_0)= \frac{w^*+m^*}{2}\left(1+\frac{a_0}{a}\right) > {\frac{w^*+m^*}{2}\geq \nu_0}, \]
and $F_0=0$ has no solution in $\overline{\tilde \Omega}\cap\left(\mathbb R_-\times C^0([-a,a])^2\right)$.


\medskip

We notice next that since $c\mapsto \underset{[-a_0, a_0]}{\max}(w_c+m_c)$ is decreasing, there exists a unique $c_0\in (0,c_*)$ such that $\underset{[-a_0, a_0]}{\max}(w_{c_0}+m_{c_0})={\nu_0}.$ We can then define
\[ \Phi_\tau(c, w, m)= \left({\nu_0}-\underset{[-a_0, a_0]}{\max}(w_c+m_c),
w-\left((1-\tau)w_c+\tau w_{c_0}\right) , m-\left((1-\tau)m_c+\tau m_{c_0}\right)\right),\]
which connects continuously $F_0=\Phi_0$ to 
\[ \Phi_1(c, w, m)=\left({\nu_0}-\underset{[-a_0, a_0]}{\max}(w_c+m_c),
w-w_{c_0},m-m_{c_0}\right). \]
Notice that $\Phi_\tau(c,w,m)=0$ implies $\underset{[-a_0, a_0]}{\max}(w_c+m_c)={\nu_0}$, which in turn implies that $c=c_0$. For any $\tau\in[0,1]$, the only solution of $ \Phi_\tau(c,w,m)=0 $ is then $(c_0,w_{c_0},m_{c_0})\not\in\partial\tilde \Omega$, which implies that the Leray-Schauder degree $\deg (F_0,\tilde \Omega)$ of $F_0$ is equal to $\deg(\Phi_1,\tilde\Omega)$, which can easily be computed since its variables are separated :
\begin{eqnarray*}
 \deg(\Phi_1,\Omega)&=&\deg\left({\nu_0}-\underset{[-a_0, a_0]}{\max}(w_c+m_c),(\bar c,c_*)\right)\\
&&\deg\left(w-w_{c_0},\left\{\tilde w\in C^0([-a,a]);\;-1<\tilde w(x)<1\right\}\right)\\
&&\deg\left(m-m_{c_0},\left\{\tilde m\in C^0([-a,a]);\;-K<\tilde m(x)<K\right\}\right)=1.
\end{eqnarray*}

\end{proof}

Next, we show that the Leray-Schauder degree of $F^0$ in the domain $\Omega$ is also non-zero, as soon as $a>0$ is large enough.

\begin{lem}
Let  $r,\,K,\,\mu$ satisfy Assumption~\ref{ass}. There exists $\bar a>0$ such that the Leray-Schauder degree of $F^0$ in the domain $\Omega$ is non-zero as soon as $a\geqslant \bar a$. 
\label{nonzerodegreeF0sigma}
\end{lem}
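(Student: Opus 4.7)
The key observation is that $F^0 = F_1$: setting $\sigma=0$ in the definition of $K^\sigma$ eliminates all the cross terms and produces exactly the decoupled system defining $K_1$. Hence $\deg(F^0,\Omega) = \deg(F_1,\Omega)$, and the plan is to connect $F_1$ to $F_0$ via the homotopy $(F_\tau)_{\tau\in[0,1]}$ introduced in \eqref{lemma:Ftau2}, and then to invoke the homotopy invariance of the Leray--Schauder degree together with the conclusion of Lemma~\ref{nonzerodegreeF0}.

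To apply homotopy invariance, I need to verify that no zero of $F_\tau$ lies on $\partial\Omega$ for any $\tau\in[0,1]$. A zero $(c,w,m)$ of $F_\tau$ solves the decoupled boundary value problem
\[
-cw'-w'' = \tau\,w(1-\mu-w),\qquad -cm'-m'' = \tau\,rm\!\left(1-\tfrac{\mu}{r}-\tfrac{m}{K}\right)
\]
on $(-a,a)$ with $w(-a)=w^*$, $m(-a)=m^*$, $w(a)=m(a)=0$, plus the normalization $\max_{[-a_0,a_0]}(w+m)=\nu_0$. The maximum-principle arguments of Propositions~\ref{lem:samesolbox} and~\ref{thm:precisebound}, which only use the sign structure of the reaction terms, adapt directly to this decoupled problem and give $w,m>0$ on $[-a,a)$ and $w<1$, $m<K$ on $(-a,a)$, uniformly in $\tau$. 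The upper bound $c<c_*$ follows from building an exponential supersolution $A e^{\lambda_- x}$ in each scalar equation, using that the Fisher--KPP critical speeds $2\sqrt{\tau(1-\mu)}$ and $2\sqrt{\tau(r-\mu)}$ of the decoupled reactions are both bounded above by $2\sqrt{r-\mu}$, which is strictly less than $c_*$; the scheme of Proposition~\ref{lem:refupperbound} then shows that $c\ge c_*$ would force $\max_{[-a_0,a_0]}(w+m) < \nu_0$ for $a$ large enough, uniformly in $\tau$.

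The main obstacle is ruling out $c\le 0$ uniformly along the homotopy, and I would split $[0,1]$ into two pieces. For $\tau\in[\tau_0,1]$ with $\tau_0>0$ fixed, adapting the Poincar\'e-type argument of Proposition~\ref{lem:nosmallc} with the comparison function $A\cos\!\bigl(\sqrt{\tau(1-\mu)/2}\,x\bigr)$ forces $\max_{[-a_0,a_0]}(w+m)\ge \tfrac{K}{2}(1-\mu) > \nu_0$ at $c=0$, as soon as $a \ge \pi/\sqrt{2\tau_0(1-\mu)}$. For $\tau\in[0,\tau_0]$ with $\tau_0$ sufficiently small, the continuity of $\tau\mapsto K_\tau$ in operator norm, combined with the compactness of the family and the fact that $K_0$ has the unique fixed point $(c_0,w_{c_0},m_{c_0})\in\Omega$ with $c_0>0$ identified in Lemma~\ref{nonzerodegreeF0}, forces every zero of $F_\tau$ to stay in a small neighbourhood of $(c_0,w_{c_0},m_{c_0})$, so $c$ remains strictly positive. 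This two-step argument is the main technical step.

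Once all a priori estimates are in place, homotopy invariance yields $\deg(F_1,\Omega)=\deg(F_0,\Omega)$. From Lemma~\ref{nonzerodegreeF0}, the unique zero of $F_0$ in $\tilde\Omega$ is $(c_0,w_{c_0},m_{c_0})$, which lies in $\Omega$; the excision property of the degree therefore gives $\deg(F_0,\Omega)=\deg(F_0,\tilde\Omega)=1$. Combining all these steps, $\deg(F^0,\Omega)=\deg(F_1,\Omega)=1\ne 0$ for every $a$ larger than some threshold $\bar a$ chosen to meet all of the constraints above simultaneously.
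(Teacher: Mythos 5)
Your skeleton is the right one and coincides with the paper's: $F^0=F_1$ because setting $\sigma=0$ in $K^\sigma$ produces exactly the decoupled system defining $K_1$, and the result should follow by homotoping $F_1$ to $F_0$ and invoking Lemma~\ref{nonzerodegreeF0}; your treatment of the boundaries $c=c_*$, $w=\pm1$, $m=\pm K$ also matches the paper. The divergence, and the gap, is in \emph{where} you run the homotopy. The paper runs $(F_\tau)_{\tau\in[0,1]}$ in the enlarged domain $\tilde\Omega$, whose lower boundary in $c$ is $c=\bar c<0$ with $\bar c$ very negative: there any zero of $F_\tau$ satisfies $-cw'-w''\ge-\mu w$ and $-cm'-m''\ge-\mu m$, hence $w,m\ge\phi_{\bar c}$ with $\phi_{\bar c}(0)$ arbitrarily close to $K$, which violates the normalization $\max_{[-a_0,a_0]}(w+m)=\nu_0$ \emph{uniformly in} $\tau$. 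The return from $\tilde\Omega$ to $\Omega$ is then done by excision for the single operator $F^0$ (via Proposition~\ref{lem:nosmallc} and Remark~\ref{Rk:gene_subsection_c}), not along the whole homotopy. You instead run the homotopy in $\Omega$ itself, and must therefore exclude zeros with $c=0$ for every $\tau\in[0,1]$; both halves of your two-step argument for this are problematic.

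For $\tau\in[\tau_0,1]$ with $\tau<1$, the comparison function $A\cos\left(\sqrt{\tau(1-\mu)/2}\,x\right)$ vanishes at $\pm\pi/\sqrt{2\tau(1-\mu)}$, which is strictly larger than $a_0=\pi/\sqrt{2(1-\mu)}$; the Poincar\'e-type argument of Proposition~\ref{lem:nosmallc} therefore needs the smallness of $w+m$, and yields the lower bound $\frac{K}{2}(1-\mu)$, on that \emph{larger} interval. This does not contradict the normalization, which only constrains $\max_{[-a_0,a_0]}(w+m)$. (Equivalently: on $[-a_0,a_0]$ the reaction constant $\tau\frac{1-\mu}{2}$ falls below the principal Dirichlet eigenvalue $\frac{1-\mu}{2}$, and at the touching point the differential inequality you need fails by a factor $\tau-1<0$.) For $\tau\in[0,\tau_0]$, your compactness argument is sound in principle but produces an unquantified threshold $\tau_0^*(a)$ depending on $a$, while the first half requires $\tau_0\ge\pi^2/(2a^2(1-\mu))$; you never show these two constraints can be met simultaneously. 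The repair is exactly the paper's device: enlarge the domain to $\tilde\Omega$ so that the only nonpositive-$c$ boundary to be excluded along the homotopy is $c=\bar c\ll 0$, where the crude sub-solution $\phi_{\bar c}$ works for all $\tau$ at once, and reserve the exclusion of $c\le 0$ for the endpoint operator $F^0$ alone.
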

\begin{proof}[Proof of Lemma \ref{nonzerodegreeF0sigma}]
 Thanks to Proposition~\ref{lem:nosmallc} and Remark~\ref{Rk:gene_subsection_c}, any solution $(c,w,m)\in \tilde\Omega$ of \eqref{eq:pbbox_sigma}, and thus any solution $(c,w,m)\in \tilde\Omega$ of $F^0(c,w,m)=0$ satisfies $c>0$, that is $(c,w,m)\in\Omega$. Then,
\begin{equation}\label{eq2}
 \deg(F^0,\Omega)=\deg(F^0,\tilde\Omega)=\deg(F_1,\tilde\Omega).
\end{equation}
For $\tau\in[0,1]$, any solution $(c,w,m)\in\tilde \Omega$ of $F_\tau(c,w,m)=0$ satisfies 
\[-cw'-w''\geq -\mu w,\quad -cm'-m''\geq -\mu m,\]
and then $w,m\geq \phi_c$, where $\phi_c$ is the solution of $-c\phi_c'-\phi_c''= -\mu\phi_c$ with $\phi_c(-a)=K$, $\phi_c(a)=0$. This solution can easily be computed explicitly, and satisfies (for any fixed $a>0$)
\[\lim_{c\to-\infty}\phi_c(0)=K.\]
we can then choose $-\bar c>0$ large enough for $\phi_{\bar c}(0)\geq \nu_0$ to hold (note that the constant $\bar c\in\mathbb R$ is not independent of $a$). Then, $F_\tau(\bar c,w,m)=0$ implies $\underset{[-a_0, a_0]}{\max}(w+m)\geq 2\phi_{\bar c}(0)>\nu_0$, which implies in turn that $F_\tau(c,w,m)=0$ has no solution on $\left(\{\bar c\}\times C^0([-a,a])^2\right)\cap \overline{\tilde \Omega}$, for any $\tau\in[0,1]$. If $F_\tau(c,w,m)=0$ with $(c,w,m)\in\overline{\tilde \Omega}$, a classical application of the strong maximum principle shows that $0<w<1$ and $0<m<K$ on $(-a,a)$ (notice that $w$ and $m$ are indeed solutions of two uncoupled Fisher-KPP equations on $[-a,a]$). Moreover, the proof of Proposition~\ref{lem:refupperbound} applies to solutions of $F_\tau(c_*,w,m)=0$, which implies that (for any $\tau\in[0,1]$),
\[ \max_{[-a_0, a_0]}(w+m)\leq Ce^{-c_*\frac{a-a_0}2},\]
and thus, $F_\tau(c,w,m)=0$ has no solution on $\left(\{c_*\}\times C^0([-a,a])^2\right)\cap \overline{\tilde \Omega}$ as soon as $a>0$ is large enough (uniformly in $\tau\in[0,1]$).

We have shown that $F_\tau(c,w,m)=0$ has no solution on $\partial \tilde\Omega$ for $\tau\in[0,1]$. Since $\tau\mapsto F_\tau$ is a continuous familly of compact operators on $\tilde \Omega$, this implies that 
\[\deg(F_1,\tilde\Omega)=\deg(F_0,\tilde\Omega),\]
which, combined to \eqref{eq2} and Proposition~\ref{nonzerodegreeF0}, concludes the proof.

\end{proof}

\begin{prop}
Let  $r,\,K,\,\mu$ satisfy Assumption~\ref{ass}. There exists $\bar a>0$ such that for $a\geqslant \bar a$, there exists 
a solution $(c,w,m)\in \mathbb R\times C^0([-a,a])^2$ of \eqref{pbnormbox} with $c\in(0, c_*)$. 
\label{thm:localexistencedegree}
\end{prop}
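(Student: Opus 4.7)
The plan is to apply the homotopy invariance of the Leray--Schauder degree to the family $(F^\sigma)_{\sigma\in[0,1]}$ defined by \eqref{lemma:Ftau}. By Lemma~\ref{Lemma:continuous_operator_family}, $\sigma\mapsto K^\sigma$ is a continuous family of compact operators on $(X,\Vert\cdot\Vert_X)$, so if I can establish that $F^\sigma$ does not vanish on $\partial\Omega$ for any $\sigma\in[0,1]$ (provided $a$ is large enough), then
\[\deg(F^1,\Omega)=\deg(F^0,\Omega)\neq 0\]
by Lemma~\ref{nonzerodegreeF0sigma}, and in particular $F^1=0$ admits a solution in $\Omega$, which by construction is precisely a solution of \eqref{pbnormbox} with $c\in(0,c_*)$.

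The main task is therefore to exclude zeros of $F^\sigma$ from $\partial\Omega$, uniformly in $\sigma\in[0,1]$. Any $(c,w,m)\in\overline\Omega$ with $F^\sigma(c,w,m)=0$ satisfies the system \eqref{eq:pbbox_sigma} along with the normalization $\max_{[-a_0,a_0]}(w+m)=\nu_0$. The a priori estimates collected in Subsection~\ref{subsubsection:Linfty} and~\ref{subsubsection:c}, which by Remark~\ref{Rk:gene_subsection_c} apply verbatim to \eqref{eq:pbbox_sigma}, then yield the following exclusions on $\partial\Omega$:
\begin{description}
\item[(i)] The strict positivity $w>0$ and $m>0$ on $[-a,a)$ from Proposition~\ref{lem:samesolbox}, combined with the boundary data $w(a)=m(a)=0$, rules out $w\equiv -1$ or $m\equiv-K$ on any interior point.
\item[(ii)] Proposition~\ref{thm:precisebound} gives the strict bounds $w<1$ and $m<K$ on $(-a,a)$, ruling out the faces $\{w(x_0)=1\}$ and $\{m(x_0)=K\}$.
\item[(iii)] Proposition~\ref{lem:nosmallc} (applied to $c=0$) gives $\max_{[-a_0,a_0]}(w+m)\geq \tfrac{K}{2}(1-\mu)>\nu_0$ by the very choice of $\nu_0$, which is incompatible with the normalization and therefore rules out $c=0$.
\item[(iv)] Proposition~\ref{lem:refupperbound} (applied at $c=c_*$) provides the estimate $\max_{[-a_0,a_0]}(w+m)\leq 2C e^{-c_*(a-a_0)/2}$, which is strictly less than $\nu_0$ once $a\geq\bar a$ for some sufficiently large $\bar a$, thus ruling out $c=c_*$.
\end{description}

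Taken together, (i)--(iv) show that no zero of $F^\sigma$ lies on $\partial\Omega$ whenever $a\geq\bar a$ (with $\bar a$ possibly enlarged to accommodate Lemma~\ref{nonzerodegreeF0sigma} as well). Homotopy invariance then gives $\deg(F^1,\Omega)=\deg(F^0,\Omega)\neq 0$, and the existence of a fixed point of $K^1$ in $\Omega$ follows, which is the desired solution of \eqref{pbnormbox}. The step I expect to be most delicate is (iv): one must check that the bound from Proposition~\ref{lem:refupperbound} — which was proved for \eqref{eq:pbbox} — transfers to the $\sigma$-deformed problem \eqref{eq:pbbox_sigma} with the same exponential rate, so that a single threshold $\bar a$ works uniformly in $\sigma\in[0,1]$; this is exactly the content of Remark~\ref{Rk:gene_subsection_c}, whose proof amounts to observing that the linear part of the system, and hence the principal eigenvalue $h_+$ and decay rate $\lambda_-$ governing the supersolution $\psi_\eta$, are unchanged when the quadratic competitive terms are multiplied by $\sigma\in[0,1]$.
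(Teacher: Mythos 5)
Your proposal is correct and follows essentially the same route as the paper: exclude zeros of $F^\sigma$ from $\partial\Omega$ uniformly in $\sigma$ using Propositions~\ref{lem:samesolbox}, \ref{thm:precisebound}, \ref{lem:nosmallc} and \ref{lem:refupperbound} via Remark~\ref{Rk:gene_subsection_c}, then invoke homotopy invariance of the Leray--Schauder degree together with Lemma~\ref{nonzerodegreeF0sigma} to conclude that $\deg(F^1,\Omega)=\deg(F^0,\Omega)\neq 0$. The only cosmetic differences are that you spell out the exclusion of the faces $w=-1$, $m=-K$ explicitly and comment on the uniformity of the constant in step (iv), both of which are consistent with the paper's argument.
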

\begin{proof}[Proof of Proposition \ref{thm:localexistencedegree}]

The first step of the proof is to show that there exists no solution  $(c,w,m)\in\partial\Omega$ of $F^\sigma(c,w,m)=0$ with $\sigma\in[0,1]$.

If such a solution exists, then Proposition \ref{lem:nosmallc} (see also Remark~\ref{Rk:gene_subsection_c}) implies that $c\neq 0$, and if $c=c_*$, then Proposition \ref{lem:refupperbound} (see also Remark~\ref{Rk:gene_subsection_c}) implies that
\begin{equation} 
\underset{[-a_0, a_0]}{\max}(w+ m)\leqslant Ce^{-c_*\frac{a-a_0}2},
\label{eq:prooflemdegreeFtau}
\end{equation}
where $C>0$ is a positive constant independent from $\sigma\in[0,1]$. If $a$ is large enough (more precisely if $a\geq a_0+\frac 2{c_*}\ln\left({\frac{2C}{\nu_0}}\right)$), then $\underset{[-a_0, a_0]}{\max}(w+m)\leq {\frac{\nu_0}{2}}$, which is a contradiction. Any solution $(c,w,m)\in \overline\Omega$ of $F^\sigma(c,w,m)=0$ then satisfies $c\in(0,c_*)$, as soon as $a>0$ is large enough.


Any solution $(c,w,m)\in \overline\Omega$ of $F^\sigma(c,w,m)=0$ is a solution of \eqref{eq:pbbox_sigma}, Proposition~\ref{lem:samesolbox} and Proposition~\ref{thm:precisebound} (see also Remark~\ref{Rk:gene_subsection_c}) then imply that for any $x\in(-a,a)$, $0< w(x)<1$ and $0< m(x)<K$.

We have shown that $F^\sigma(c,w,m)=0$ had no solution $(c,w,m)\in\partial\Omega$, for $\sigma\in [0,1]$. Since moreover $(F_\sigma)_{\sigma\in[0,1]}$ is a continuous family of compact operators (see Lemma~\ref{Lemma:continuous_operator_family}) this is enough to show that $\deg(F^\sigma,\Omega)$ is independent of $\sigma\in[0,1]$, and then, thanks to Lemma~\ref{nonzerodegreeF0sigma}, as soon as $a>0$ is large enough,
\[\deg(F^1,\Omega)=\deg(F^0,\Omega)\neq 0.\]
which implies in particular that there exists at least one solution $(c,w,m)\in\Omega$ of $F^1(c,w,m)=0$, that is a solution $(c,w,m)$ of \eqref{pbnormbox} in $\Omega$.

%
%
%
\end{proof}


\subsection{Construction of a travelling wave}\label{subsection:whole_line}

\begin{prop}\label{prop:existence_front}
Let  $r,\,K,\,\mu$ satisfy Assumption~\ref{ass}. There exists a solution $(c, w, m)\in(0, c_*]\times C^0(\mathbb R)^2$ of problem \eqref{systemefront} that satisfies $0<w(x)<1$ and $0<m(x)<K$ for $x\in\mathbb R$, as well as $(w+m)(0)={\nu_0}. $
\end{prop}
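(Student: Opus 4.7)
The plan is to let $a\to\infty$ in the family of localized solutions provided by Proposition~\ref{thm:localexistencedegree}, and then translate to normalize at $0$.

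\medskip

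First, for a sequence $a_n\to\infty$, I pick a solution $(c_n,w_n,m_n)$ of \eqref{pbnormbox} on $[-a_n,a_n]$ with $c_n\in(0,c_*)$. The available \emph{a priori} control is already substantial: $c_n\in(0,c_*)$ is bounded, and $0<w_n<1$, $0<m_n<K$ on $(-a_n,a_n)$ by Propositions~\ref{lem:samesolbox} and~\ref{thm:precisebound}. In particular $f_w(w_n,m_n)$ and $f_m(w_n,m_n)$ are bounded in $L^\infty$ uniformly in $n$, so the interior elliptic regularity argument used in the proof of Proposition~\ref{thm:regularity} ($W^{2,p}$ Calderón--Zygmund followed by Schauder) produces uniform $C^{2,\alpha}$ bounds on every compact subset of $\mathbb R$. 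An Arzelà--Ascoli diagonal extraction then yields a subsequence along which $c_n\to c_\infty\in[0,c_*]$ and $(w_n,m_n)\to(w,m)$ in $C^2_{\mathrm{loc}}(\mathbb R)$. Passing to the limit in the equation and in the normalization, $(c_\infty,w,m)$ solves \eqref{systemefront} on $\mathbb R$ with $0\leq w\leq 1$, $0\leq m\leq K$ and $\max_{[-a_0,a_0]}(w+m)=\nu_0$.

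\medskip

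Next I upgrade these bounds to the strict ones. Rewriting the $w$-equation as $-c_\infty w'-w''+w(w+m+\mu-1)=\mu m\geq 0$, I observe that $w\equiv 0$ would force $m\equiv 0$ and contradict $\nu_0>0$. Hence $w\not\equiv 0$, and the strong minimum principle applied to this linear elliptic inequality (with bounded zeroth-order coefficient) gives $w>0$ on $\mathbb R$; a symmetric argument, now with $\mu w>0$ on the right-hand side, gives $m>0$. The strict upper bounds $w<1$ and $m<K$ follow from the strong maximum principle, exactly as in the third bullet of the proof of Proposition~\ref{thm:precisebound}.

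\medskip

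The main obstacle is excluding the degenerate case $c_\infty=0$. This is precisely where the choice $\nu_0\leq\frac{K}{4}(1-\mu)$ pays off: the argument of Proposition~\ref{lem:nosmallc} uses only the strict positivity of $w$ and $m$ on $[-a_0,a_0]$ to build the cosine subsolution $\psi_{A_0}$, and does not depend on how far the solution extends beyond this interval; it therefore applies verbatim to any solution defined on all of $\mathbb R$. If $c_\infty=0$ held, it would imply $\max_{[-a_0,a_0]}(w+m)\geq\frac{K}{2}(1-\mu)>\nu_0$, contradicting the limiting normalization; hence $c_\infty\in(0,c_*]$. Finally, a translation by a point $x_\infty\in[-a_0,a_0]$ at which $w+m$ attains the value $\nu_0$ moves the normalization to $x=0$, yielding $(w+m)(0)=\nu_0$ while preserving every other property, and concludes the plan.
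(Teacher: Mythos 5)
Your proposal is correct and follows essentially the same route as the paper: extract a limit of the normalized localized solutions from Proposition~\ref{thm:localexistencedegree} via uniform interior elliptic estimates and a diagonal argument, pass to the limit in the equation and in the normalization, invoke Lemma~\ref{lem:nosmallc} to rule out $c=0$, and translate so that $(w+m)(0)=\nu_0$. Your treatment is in fact slightly more explicit than the paper's on two points it leaves implicit — the strong-maximum-principle upgrade to strict bounds $0<w<1$, $0<m<K$ on all of $\mathbb{R}$, and the observation that the proof of Lemma~\ref{lem:nosmallc} only uses the solution on $[-a_0,a_0]$ and hence applies to the limit defined on the whole line.
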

\begin{proof}[Proof of Proposition \ref{prop:existence_front}]
 For $n\geq 0$, let $ a_n:=\bar a+n $ (where $ \bar a>0 $ is defined in Proposition \ref{thm:localexistencedegree}), and $(c_n, w_n, m_n)$ a solution of \eqref{pbnormbox} provided by Proposition \ref{thm:localexistencedegree}. 
%
We denote by $(w_n^k, m_n^k)$ the restriction  of $(w_n, m_n)$ to $ [-a_k, a_k] $ $ (k<n). $ From interior elliptic estimates (see e.g. Theorem 8.32 in \cite{GT98}), we know that there exists a constant $C>0$ independent of $k>0$, such that for any $n\geq k+1$,
\[ \max\left(\left\Vert w_n|_{[-a_k, a_k]}\right\Vert_{C^1([-a_k, a_k])},\left\Vert m_n|_{[-a_k, a_k]}\right\Vert_{C^1([-a_k, a_k])}\right)\leqslant C, \]
 
 Since $c_n\in [0,c^*]$ for all $n\in\mathbb N$, we can extract from $(c_n, w_n, m_n)$ a subsequence (that we also denote by $(c_n, w_n, m_n)$), such that $c_n\rightarrow c_0$ for some $c_0\in[0,c^*]$. Since $c_n\in (0,c_*)$ for all $n\geq 3$, the limit speed satisfies $c_0\in[0,c_*]$. Thanks to Ascoli's Theorem, $C^1([-a_k, a_k])$ is compactly embedded in $C^0([-a_k,a_k])$. We can then use a diagonal extraction, to get a subsequence such that $w_n$ and $m_n$ both converge uniformly on every compact interval of $\mathbb R$. 
Let $w_0,\, m_0\in C^0(\mathbb R)$ the limits of $(w_n)_n$ and $(m_n)_n$ respectively. Then, thanks to the uniform convergence, we get that
 \[\forall x\in\mathbb R,\quad 0\leqslant w_0(x)\leqslant 1,\quad 0\leqslant m_0(x)\leqslant K, \]
 \[ -c_0w_0'-w_0''=f_w(w_0, m_0)\textrm{ on }\mathbb R, \]
 \[ -c_0m_0'-m_0''=f_m(w_0, m_0)\textrm{ on }\mathbb R, \]
 in the sense of distributions. Thanks to Proposition \ref{thm:regularity}, these two functions are smooth and are thus classical solutions of \eqref{systemefront}. Moreover,  $ \underset{[-a_0, a_0]}{\max}(w_0+m_0)={\nu_0}$, and Lemma \ref{lem:nosmallc} implies that $c_0\neq 0$. Finally, up to a shift, $w_0(0)+m_0(0)={\nu_0}$.

\end{proof}


In the next proposition, we show that the solution of \eqref{systemefront} obtained in Proposition~\ref{prop:existence_front} are indeed propagation fronts.

\begin{prop}
Let  $r,\,K,\,\mu$ satisfy Assumption~\ref{ass} and $(c, w, m)\in\mathbb R\times C^0(\mathbb R)^2$ a solution of \eqref{systemefront} such that $(w+m)(0)={\nu_0}$. 
 Then $w+m$ is decreasing on $(0,+\infty)$, 
\[\lim_{x\to \infty}w(x)=\lim_{x\to \infty}m(x)=0,\]
and $w(x)+m(x)\geq {\nu_0}$ on $(-\infty,0]$.
 \label{prop:propright}
\end{prop}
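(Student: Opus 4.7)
The approach hinges on a super-solution property for $s := w + m$: summing the two equations in \eqref{systemefront} yields
\[
-cs' - s'' \;=\; w(1 - s) + rm\Bigl(1 - \frac{s}{K}\Bigr),
\]
whose right-hand side is strictly positive whenever $w, m > 0$ and $s < K$ (recall $K < 1$). The strong minimum principle then forbids interior local minima of $s$ at any point where $s < K$ — I will call this the \emph{no-dip property} and it is the engine of all three claims.

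For the monotonicity on $(0, +\infty)$, my plan is to first exploit the fact that, in the limit construction of Proposition~\ref{prop:existence_front}, $(w+m)(0) = \nu_0$ is actually the maximum of $w + m$ on a neighborhood of $0$ (inherited from $\max_{[-a_0, a_0]}(w_n + m_n) = \nu_0$ attained at the shifted origin), so in particular $s'(0) = 0$. Then, on any subinterval $[0, T]$ on which $s \leq K$, the super-solution inequality reads $(e^{cx} s')' \leq 0$; together with $s'(0) = 0$, this gives $s' \leq 0$ on $[0, T]$, so $s(T) \leq \nu_0 < K$, and a bootstrap pushes $T$ to $+\infty$. Thus $s$ is non-increasing and bounded by $\nu_0 < K$ on all of $[0, +\infty)$. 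Strict decrease follows from the strong maximum principle applied to the equation for $s$: if $s'(x_1) = 0$ for some $x_1 > 0$, $s$ would be locally constant, forcing $w(x_1)(1 - s) + r m(x_1)(1 - s/K) = 0$, which is impossible since $w, m > 0$ and $s < K$.

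For the limit $s \to 0$ at $+\infty$, I use the monotonicity to obtain $s(x) \to L \in [0, \nu_0]$, then run a translation–compactness argument: along any $x_n \to +\infty$, the translates $(w(\cdot + x_n), m(\cdot + x_n))$ are uniformly bounded in $C^k_{\mathrm{loc}}$ by elliptic regularity (Proposition~\ref{thm:regularity}), and extract a $C^2_{\mathrm{loc}}$ limit $(\bar w, \bar m)$ solving the same system with $\bar w + \bar m \equiv L$. Substitution forces $f_w(\bar w, \bar m) = f_m(\bar w, \bar m) = 0$, so $(\bar w, \bar m)$ is a constant equilibrium, hence equals either $(0, 0)$ or $(w^*, m^*)$ by Appendix~\ref{appendix_reaction_terms}; the bound $L \leq \nu_0 < w^* + m^*$ excludes the latter, so $L = 0$, and $w, m \to 0$ individually.

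The lower bound $s \geq \nu_0$ on $(-\infty, 0]$ I propose to extract from the approximating sequence $(w_n, m_n)$ directly: each $w_n + m_n$ satisfies the same equation, enjoys the no-dip property (the super-solution argument is identical), with $(w_n + m_n)(-a_n) = w^* + m^* > \nu_0$ and $(w_n + m_n)(0) = \nu_0$. Fix $x \leq 0$ and $n$ large enough that $x \in [-a_n, 0]$: the minimum of $w_n + m_n$ on $[-a_n, 0]$ is attained either on the boundary (value at least $\min(w^* + m^*, \nu_0) = \nu_0$) or at an interior local minimum (value at least $K > \nu_0$ by no-dip). In either case $(w_n + m_n)(x) \geq \nu_0$, and pointwise convergence $s_n(x) \to s(x)$ yields $s(x) \geq \nu_0$. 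The main obstacle is really in identifying the limit $L$ at $+\infty$: the translation-compactness argument must be executed with care to exclude $L = w^* + m^*$, which is where the specific choice $\nu_0 \leq (w^* + m^*)/2$ becomes essential.
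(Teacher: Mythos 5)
There is a genuine gap, and it sits at the foundation of your monotonicity argument. The paper's proof does not rest on the ``no-dip'' property but on a \emph{backward propagation of concavity}: if $s=w+m$ satisfies $s(x_0)<K$ and $s'(x_0)\geq 0$ at some point $x_0$, then $s''(x_0)<0$, and the set $\mathcal C=\{x\leq x_0:\ s''\leq 0 \text{ on }[x,x_0]\}$ is shown to be open and closed, hence all of $(-\infty,x_0]$; a concave function with $s'(x_0)\ge 0$ on a left half-line is unbounded below as $x\to-\infty$, contradicting $s\geq 0$. This shows $s'<0$ at \emph{every} point where $s<K$, which yields simultaneously the decrease on $[0,\infty)$ and the bound $s\geq\nu_0$ on $(-\infty,0]$, for an arbitrary solution with $s(0)=\nu_0$. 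Your substitute for this, the claim $s'(0)=0$, is not justified: (i) the proposition is stated for (and later applied in Lemma~\ref{lem:compuvsum} and Proposition~\ref{prop:vitesse_front} to) any solution normalized by $(w+m)(0)=\nu_0$, with no assumption that $0$ is a critical point and no access to the approximating sequence; (ii) even for the constructed solution, the normalization is $\max_{[-a_0,a_0]}(w_n+m_n)=\nu_0$, a maximum over a \emph{compact window} that may be attained at an endpoint $\pm a_0$ of that window --- an interior point of $[-a_n,a_n]$ where $s_n'$ need not vanish, since $s_n$ may well exceed $\nu_0$ outside $[-a_0,a_0]$. Without $s'(0)\leq 0$, the no-dip property alone cannot prevent $s$ from climbing above $K$ to the right of $0$ (the right-hand side loses its sign once $s>K$), and the bootstrap never starts.

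Two further problems. First, your lower-bound argument asserts $(w_n+m_n)(0)=\nu_0$, which is false for the same reason; as written, the minimum of $s_n$ over $[-a_n,0]$ could be attained at $x=0$ with value $s_n(0)<\nu_0$, so the trichotomy does not close (it can be repaired by shifting each $s_n$ so that its window-maximum sits at the origin, but this again ties the proof to the construction). Second, note that if you \emph{did} first establish $s\geq\nu_0$ on $(-\infty,0]$ with $s(0)=\nu_0$, then $s'(0)\leq 0$ would follow and your $(e^{cx}s')'<0$ bootstrap would close --- so the logical order of your two halves is also inverted. The treatment of the limit at $+\infty$ is correct (and the compactness detour is heavier than needed: once $s\to l<K$, the right-hand side of the summed equation dominates $\delta\,(w+m)$ for a positive $\delta$, while the left-hand side tends to $0$, forcing $l=0$ directly, which is the paper's route).
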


\begin{proof}[Proof of Proposition \ref{prop:propright}]

Assume that $w(x)+m(x)< K$, and $w'(x)+m'(x)\geq 0$. Then, \qg{
\begin{equation}\label{eqconcavity}
 -c(w+m)'(x)-(w+m)''(x)=w(x)(1-(w(x)+m(x)))+rm(x)\left(1-\frac{w(x)+m(x)}{K}\right),
\end{equation} } 
\qg{with the right side positive, }and then $(w+m)''(x)< 0$.
%
If there exists $x_0\in\mathbb R$ satisfying $w(x_0)+m(x_0)< K$, and $w'(x_0)+m'(x_0)\geq0$, then we can define $ \mathcal C=\{x\leqslant x_0, \forall y\in[x, x_0], (w+m)''(y)\leqslant 0\}$. Then $\mathcal C\neq\varnothing$ and $\mathcal C$ is closed since $(w+m)''$ is continuous. Let $x_1\in \mathcal C$. Then $(w+m)'$ is decreasing on $[x_1, x_0]$, so that $(w+m)'(x_1)\geqslant (w+m)'(x_0)>0$ and $(w+m)(x_1)\leqslant (w+m)(x_0)$. \eqref{eqconcavity} then implies that $(w+m)''(x_1)<0$, which proves that $ \mathcal C $ is open, and thus $\mathcal C=(-\infty, 0)$. This implies in particular that $w(x)+m(x)<0$ for some $x<x_0$, which is a contradiction. We have then proven that $x\mapsto w(x)+m(x)$ is decreasing on $[x_0,\infty)$ as soon as $w(x_0)+m(x_0)\leq K$. It implies that $w(x)+m(x)\geq {\nu_0}$ for $x\leq 0$, and that $x\mapsto w(x)+m(x)$ is decreasing on $[0,\infty)$.

Then, $\lim_{x\to\infty}w(x)+m(x)=l\in [0,K)$ exists, which implies that $\lim_{x\to\infty}w'(x)+m'(x)=\lim_{x\to\infty}w''(x)+m''(x)=0$, since $w$ and $m$ are regular. Then,
\[\lim_{x\to\infty}\left( -c(w+m)'(x)-(w+m)''(x)\right)=0,\]
which, combined to \eqref{eqconcavity}, proves that $\lim_{x\to\infty}w(x)+m(x)=0$. 

\end{proof}

\subsection{Characterization of the speed of the constructed travelling wave}

\begin{lem}
Let  $r,\,K,\,\mu$ satisfy Assumption~\ref{ass} and $(c,w,m)\in\mathbb R\times C^0(\mathbb R)^2$ a solution of \eqref{systemefront} such that $(w+m)(0)={\nu_0}$. Then there exists $x_0\in\mathbb R$ and $C>0$ such that
\[\forall x\geqslant x_0,\quad w(x)+m(x)\leqslant C \min(w(x),m(x)).\]
\label{lem:compuvsum}
\end{lem}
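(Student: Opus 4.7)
The plan is to show that $w$ and $m$ decay at comparable rates at $+\infty$ by proving, via an elementary maximum-principle argument on the auxiliary functions $m-\varepsilon w$ and $w-\varepsilon' m$, that each is bounded below by a positive multiple of the other. The intuition is that the mutation coupling (the $\mu$-term in \eqref{def_f}) prevents either species from decaying substantially faster than the other.

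By Proposition~\ref{prop:propright}, $w+m\to 0$ at $+\infty$, so for any $\delta>0$ there exists $x_1$ with $w+m\le\delta$ on $[x_1,\infty)$; I will fix $\delta$ below. A direct computation from \eqref{def_f} shows that $\zeta:=m-\varepsilon w$ satisfies
\[
 -c\zeta'-\zeta'' \;=\; [\mu-\varepsilon(1-\mu)]\,w + [(r-\mu)-\varepsilon\mu]\,m - (w+m)\Bigl[\tfrac{r}{K}m-\varepsilon w\Bigr].
\]
For $\varepsilon\le\varepsilon_0:=\tfrac12\min\bigl(\tfrac{\mu}{1-\mu},\,\tfrac{r-\mu}{\mu}\bigr)$ the two bracketed linear coefficients are bounded below by a positive constant $\alpha=\alpha(r,\mu)$, while $\bigl|\tfrac{r}{K}m-\varepsilon w\bigr|\le\bigl(\tfrac{r}{K}+\varepsilon_0\bigr)(w+m)$. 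Choosing $\delta:=\alpha/\bigl(2(\tfrac{r}{K}+\varepsilon_0)\bigr)$ then gives $-c\zeta'-\zeta''\ge\tfrac{\alpha}{2}(w+m)\ge 0$ on $[x_1,\infty)$.

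I next set $\varepsilon:=\min\bigl(\varepsilon_0,\,m(x_1)/(2w(x_1))\bigr)>0$; this choice ensures both the above differential inequality and $\zeta(x_1)\ge m(x_1)/2>0$. Applying the weak maximum principle for the zero-th-order-free operator $-\partial_{xx}-c\partial_x$ on $[x_1,R]$ yields $\min_{[x_1,R]}\zeta=\min(\zeta(x_1),\zeta(R))$; letting $R\to\infty$ and using $\zeta(R)\to 0$, I conclude that $\zeta\ge 0$, that is $m\ge\varepsilon w$ on $[x_1,\infty)$. A completely symmetric computation for $w-\varepsilon' m$ (with $(1-\mu)>0$ and $\mu>0$ again supplying the positive linear coefficients, and an analogous quadratic bound) yields, possibly after enlarging $x_1$, a constant $\varepsilon'>0$ such that $w\ge\varepsilon' m$ on $[x_1,\infty)$. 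These two bounds combine into $w+m\le\bigl(1+1/\min(\varepsilon,\varepsilon')\bigr)\min(w,m)$, which is the statement of the lemma.

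The main obstacle is that a priori the sign of the nonlinear term $(w+m)[\tfrac{r}{K}m-\varepsilon w]$ depends on the very ratio $w/m$ I am trying to control. This is overcome precisely by exploiting the decay $w+m\to 0$ from Proposition~\ref{prop:propright}: for $x$ large the nonlinear competition is absorbed into the (linearly positive) reaction-mutation part regardless of the sign of the bracket. A minor technical point is that $\varepsilon$ depends on $x_1$ through $m(x_1)/w(x_1)$, which is harmless since $w(x_1),m(x_1)>0$ and the constant $C$ in the statement may depend on the solution.
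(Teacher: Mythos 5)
Your proof is correct and follows essentially the same route as the paper: the paper applies the weak maximum principle to $S-\alpha w=m-(\alpha-1)w$ with $\alpha$ large (and symmetrically to $S-\alpha m$), which is your argument for $m-\varepsilon w\ge 0$ and $w-\varepsilon' m\ge 0$ up to the reparametrization $\varepsilon'=1/(\alpha-1)$, with the same two ingredients — the decay of $w+m$ at $+\infty$ from Proposition~\ref{prop:propright} to control the sign of the reaction terms, and a constant adjusted to the values at $x_1$ to handle the left boundary. The only cosmetic difference is that the paper chooses $\alpha$ large enough to make both pointwise coefficients nonpositive directly, whereas you split off the quadratic term and absorb it into the linear part; both devices work.
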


\begin{proof}[Proof of Lemma~\ref{lem:compuvsum}]
 Let $S(x):=w(x)+m(x)$, and $\alpha>0$. Then
\[-c(S-\alpha w)'-(S-\alpha w)''=((1-S)-(1-S-\mu)\alpha)w+(S-w)\left(r\left(1-\frac{S}{K}\right)-\alpha\mu\right). \]
Let $x_1\in\mathbb R$ such that $S(x)\leqslant \frac{1-\mu}{2}$ for all $x\geq x_1$ ($x_1$ exists thanks to Proposition~\ref{prop:propright}). Then, for $\alpha\geqslant\alpha_0:=\max\left(\frac{2}{1-\mu}, \frac{r}{\mu}\right)$ and $x\geqslant x_1$,
\[-c(S-\alpha w)'(x)-(S-\alpha w)''(x)\leqslant 0. \]
Let $\alpha_1:=\max\left(\frac{S(x_1)}{w(x_1)}, \alpha_0\right)+1$. Then $-c(S-\alpha_1 w)'-(S-\alpha_1 w)''\leqslant 0 $ over $(x_1, +\infty)$. We can then apply the  weak maximum principle to show that for any $x_2>x_1$,
\[\max_{[x_1,x_2]} (S-\alpha_1 w)(x)=\max\left((S-\alpha_1 w)(x_1),(S-\alpha_1 w)(x_2)\right).\]
Since $(S-\alpha_1 w)(x_1)\leqslant 0$ and $\underset{x_2\rightarrow+\infty}{\lim}(S-\alpha_1 w)(x_2)=0$, we have indeed shown that $\sup_{[x_1,\infty)} (S-\alpha_1 w)(x)=0$, and then,
\[\forall x\geq x_1,\quad w(x)+m(x)\leq \alpha_1w(x).\]
A similar argument can be used to show that there exists $x_2\in\mathbb R$ and $\alpha_2>0$ such that $w(x)+m(x)\leqslant \alpha_2 m(x)$ for $x\geqslant x_2$, which concludes the proof of the Lemma.
\end{proof}

\begin{prop}\label{prop:vitesse_front}
Let  $r,\,K,\,\mu$ satisfy Assumption~\ref{ass} and $(c,w,m)\in\mathbb R_+\times C^0(\mathbb R)^2$ a solution of \eqref{systemefront} such that $(w+m)(0)={\nu_0}$ and $c\leq c_*$. Then, $c=c_*$.
\end{prop}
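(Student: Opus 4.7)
The strategy is to project the system onto the positive Perron--Frobenius eigenvector $X=(X_1,X_2)$ of the linearization $M$ introduced in \eqref{def:hX}, thereby reducing \eqref{systemefront} to a scalar second-order ODE for $\phi(x):=X_1 w(x)+X_2 m(x)>0$, and then to derive a contradiction via a Sturm-type oscillation argument when $c<c_*$. Using $MX=h_+X$ and the formulas \eqref{def_f} for $f_w,f_m$, a direct computation gives
\[
-c\phi'-\phi''=X_1 f_w(w,m)+X_2 f_m(w,m)=h_+\phi-N(x),
\]
where $N(x):=\bigl(X_1 w(x)+\tfrac{r}{K}X_2 m(x)\bigr)\bigl(w(x)+m(x)\bigr)\geq 0$ collects the quadratic terms. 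Setting $\delta(x):=N(x)/\phi(x)\geq 0$, this rewrites as $\phi''+c\phi'+(h_+-\delta(x))\phi=0$ on $\mathbb R$.

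The next step is to check that $\delta(x)\to 0$ as $x\to+\infty$. This is where Lemma~\ref{lem:compuvsum} plays a crucial role: it provides $w(x)+m(x)\leq C\min(w(x),m(x))$ for $x$ large enough, which implies $N(x)\leq C'(w(x)+m(x))\,\phi(x)$ and hence $\delta(x)\leq C'(w(x)+m(x))$. Combined with Proposition~\ref{prop:propright}, which forces $w+m\to 0$ at $+\infty$, this turns $\delta$ into a genuinely small perturbation at infinity. I then apply the standard change of unknown $\tilde\phi(x):=e^{cx/2}\phi(x)>0$ to eliminate the drift term, obtaining
\[
\tilde\phi''(x)+a(x)\tilde\phi(x)=0,\qquad a(x):=h_+-\delta(x)-\tfrac{c^2}{4}.
\]

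Suppose, for contradiction, that $c<c_*$. Since $c_*^2=4h_+$ we have $h_+-c^2/4>0$, and because $\delta(x)\to 0$ there exists $x_0\in\mathbb R$ such that $a(x)\geq b:=(h_+-c^2/4)/2>0$ on $[x_0,+\infty)$. Comparing with the oscillating test function $\psi(x):=\sin(\sqrt{b}(x-x_0))$, which is non-negative on $I:=[x_0,x_0+\pi/\sqrt{b}]$ and vanishes only at the endpoints, the pointwise identity $\tilde\phi\psi''-\tilde\phi''\psi=(a(x)-b)\tilde\phi\psi\geq 0$ integrates to
\[
0\;\leq\;\int_{I}(a(x)-b)\tilde\phi\psi\,dx \;=\; \bigl[\tilde\phi\psi'-\tilde\phi'\psi\bigr]_{x_0}^{x_0+\pi/\sqrt{b}} \;=\; -\sqrt{b}\,\bigl[\tilde\phi(x_0)+\tilde\phi(x_0+\pi/\sqrt{b})\bigr]\;<\;0,
\]
which is absurd. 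Hence $c\geq c_*$, and combined with the hypothesis $c\leq c_*$ this forces $c=c_*$. The most delicate step is the asymptotic control $\delta(x)\to 0$: a priori one only knows $N=O((w+m)^2)$, which is not enough by itself, and it is precisely Lemma~\ref{lem:compuvsum} that rules out the possibility that $\min(w,m)$ decays strictly faster than $w+m$; once this is secured, the Sturm-type comparison is routine.
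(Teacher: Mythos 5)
Your proof is correct, and it takes a genuinely different route from the paper's. The paper keeps the system in vector form: it uses Lemma~\ref{lem:compuvsum} to absorb the quadratic terms into componentwise differential inequalities for $w$ and $m$, then slides the vector-valued subsolution $\psi_\eta=\varphi_\eta X$ (with $\varphi_\eta=\eta e^{-cx/2}\sin(\tfrac{\sqrt{4h-c^2}}{2}x)$ and $c^2/4<h<h_+$) underneath $(w,m)$ on a half-period; the touching point yields a quantitative lower bound $\bar\eta\geq (h_+-h)/(C\max(X_1,X_2))$, hence a positive lower bound on $w$ arbitrarily far to the right, contradicting Proposition~\ref{prop:propright}. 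You instead scalarize first: since $M$ is symmetric, pairing the system with the positive Perron eigenvector $X$ of \eqref{def:hX} collapses the nonlinearity into the single nonnegative term $N=(X_1w+\tfrac rK X_2m)(w+m)$, and the Liouville substitution plus a one-half-period Sturm comparison finishes the job; the identity $X_1f_w+X_2f_m=h_+\phi-N$ and the Wronskian computation are both correct. One remark: Lemma~\ref{lem:compuvsum} is in fact not needed in your argument. Since $X_1,X_2>0$, one has $\phi\geq\min(X_1,X_2)(w+m)$ and $N\leq\max\left(X_1,\tfrac rK X_2\right)(w+m)^2$ directly, so $\delta\leq C(w+m)\to 0$ follows from Proposition~\ref{prop:propright} alone --- the projection onto a strictly positive eigenvector automatically rules out the ``$\min(w,m)$ decays faster than $w+m$'' pathology you flag as the delicate point. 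Your route is therefore strictly lighter than the paper's, which genuinely does need Lemma~\ref{lem:compuvsum} because it compares the barrier to $w$ and to $m$ separately; the price is that your reduction leans on the symmetry of $M$ (in the non-symmetric case one would pair with the left Perron eigenvector instead), whereas the paper's componentwise sliding scheme is the same mechanism already used in Proposition~\ref{lem:refupperbound} and carries over verbatim.
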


\begin{Rk}\label{Rk:endproof}
 Combined to Proposition~\ref{prop:existence_front} and Proposition~\ref{prop:propright}, this proposition completes the proof of Theorem~\ref{thm:main}.
\end{Rk}

\begin{proof}[Proof of Proposition \ref{prop:vitesse_front}]

Let $(c,w,m)\in[0,c_*]\times C^0(\mathbb R)^2$ a solution of \eqref{systemefront} such that $(w+m)(0)={\nu_0}$. Thanks to Lemma~\ref{lem:compuvsum}, there exists $x_0>0$ and $C>0$ such that
\begin{equation*}
 \left\{\begin{array}{l}
         -cw'-w''\geq w(1-\mu-C w)+\mu m,\\
	 -cm'-m''\geq m(r-\mu-C m)+\mu w.
        \end{array}
\right.
\end{equation*}
Let now $\varphi_\eta(x+x_1):=\eta e^{-\frac c2x}\sin\left(\frac{\sqrt{4h-c^2}}2x\right)$, where $\eta>0$, $h\geq c^2/4$ and $x_1>x_0$. $\varphi_\eta$ then satisfies $\varphi_\eta(x_1)=\varphi_\eta\left(x_1+2\pi/\sqrt{4h-c^2}\right)=0$, and $-c\varphi_\eta'-\varphi_\eta''=h\varphi_\eta$ on $\left[x_1,x_1+2\pi/\sqrt{4h-c^2}\right]$. $\psi_\eta:=\varphi_\eta X$ ($X$ is defined by \eqref{def:hX}) is then a solution of
\[-c\psi_\eta'-\psi_\eta''=(M+(h-h_+)Id)\psi_\eta,\]
where $h_+$ is defined by \eqref{def:hX}, and we can also write this equality as follows
\begin{equation*}
 \left\{\begin{array}{l}
         -c(\psi_\eta)_1'-(\psi_\eta)_1''= (\psi_\eta)_1\left(1-\mu+(h-h_+)\right)+\mu (\psi_\eta)_2,\\
	 -c(\psi_\eta)_2'-(\psi_\eta)_2''= (\psi_\eta)_2\left(r-\mu+(h-h_+)\right)+\mu (\psi_\eta)_1.
        \end{array}
\right.
\end{equation*}
Assume now that $c<c_*$. Then, we can choose $c^2/4<h<c_*^2/4=h_+$, and define 
\[\bar\eta=\max\left\{\eta>0;\,\forall x\in\left[x_1,x_1+2\pi/\sqrt{4h-c^2}\right],\, (\psi_\eta)_1(x)\leq w(x),\,(\psi_\eta)_2(x)\leq m(x)\right\}.\]
Since $w$ and $m$ are positive bounded function, $\bar\eta>0$ exists, and since 
$$(\psi_\eta)_i(x_1)=(\psi_\eta)_i\left(x_1+2\pi/\sqrt{4h-c^2}\right)=0,$$ 
there exists $\bar x\in \left(x_1,x_1+2\pi/\sqrt{4h-c^2}\right)$ such that either $(\psi_\eta)_1(\bar x)= w(\bar x)$ or $(\psi_\eta)_2(\bar x)= m(\bar x)$. Assume w.l.o.g. that $(\psi_\eta)_1(\bar x)= w(\bar x)$. Then $w-(\psi_\eta)_1$ has a local minimum in $\bar x$, which implies that
\begin{eqnarray*}
 0&\geq& -c(w-(\psi_{\bar \eta})_1)'(\bar x)-(w-(\psi_{\bar \eta})_1)''(\bar x)\\
&\geq& \left[w(\bar x)(1-\mu-C w(\bar x))+\mu m(\bar x)\right]-\left[(\psi_{\bar \eta})_1(\bar x)\left(1-\mu+(h-h_+)\right)+\mu (\psi_{\bar \eta})_2(\bar x)\right]\\
&\geq&(\psi_{\bar \eta})_1(\bar x)\left[(h_+-h)-C(\psi_{\bar \eta})_1(\bar x)\right],
\end{eqnarray*}
and then $\bar \eta\geq (h_+-h)/(CX_1)$. A similar argument holds if $(\psi_\eta)_2(\bar x)= m(\bar x)$, so that in any case, $\bar \eta\geq (h_+-h)/(C\max(X_1,X_2))$, and $\psi_{\bar\eta}(x_1+\cdot)\leq m$, $\psi_{\bar\eta}(x_1+\cdot)\leq m$ on $\left[x_1,x_1+2\pi/\sqrt{4h-c^2}\right]$, as soon as $x_1\geq x_0$. In particular, for any $x_1\geq x_0$,
\[w\left(x_1+\pi/\sqrt{4h-c^2}\right)\geq \frac{h_+-h}{C\max(X_1,X_2)}e^{-\frac {c\pi}{2\sqrt{4h-c^2}}}X_1>0,\]
which is a contradiction, since $w(x)\to_{x\to\infty}0$ thanks to Proposition~\ref{prop:propright}.
\end{proof}

\section{Proof of Theorem \ref{thm:monotonicity}}
\label{sec:monotonicity}

\subsection{General case}
\label{sub:general_case}

%
%
%
%
%

%


The proof of the next lemma is based on a phase-plane-type analysis, see Figure~\ref{fig-phase}

\begin{figure}[h]
\centering
\includegraphics{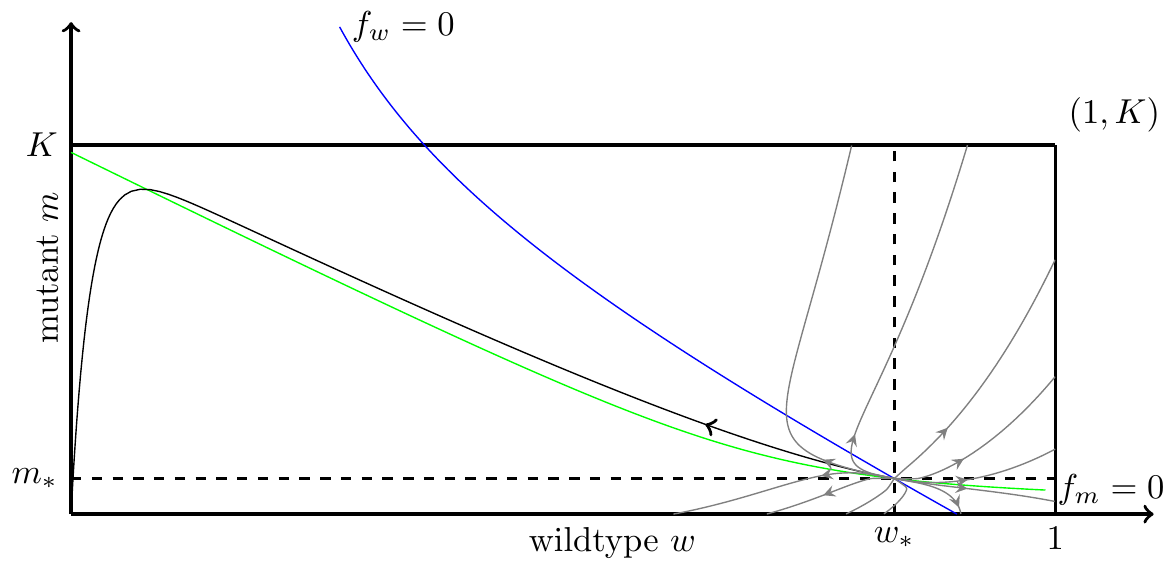}
\caption{Phase-plane-type representation of a solution of \eqref{eq:pbbox}: we represent (dark line) $x\mapsto (w(x),m(x))\in [0,1]\times[0,K]$. Note that the usual phase-plane for \eqref{eq:pbbox} is of dimension $4$. The  blue line represents the set of $(w,m)$ such that $f_w(w,m)=0$ (see \eqref{def_f}), $f_w(w,m)>0$ for $(m,w)$ on the left of the blue curve. The green line represent the set of $(w,m)$ such that $f_m(w,m)=0$ (see \eqref{def_f}), $f_m(w,m)>0$ for $(m,w)$ under the green curve. the gray lines represent several other solutions of \eqref{systemefront} such $(w(-a),m(-a))=(w^*,m^*)$. The dashed dark lines separate this phase plane into four compartiments that will be used in the third step of the proof of Lemma~\ref{lem:plan_phase}. \qg{Finally, the solid black line corresponds to the travelling wave.}}
\label{fig-phase} 
\end{figure}

\begin{lem}\label{lem:plan_phase}
Let $r,\,K,\,\mu$ satisfy Assumption~\ref{ass}. Let $(c,w,m)\in \mathbb R_+\times C^0([-a,a])^2 $ be a solution of \eqref{eq:pbbox}. 
Then there exists $\bar x\in[-a,0)$ such that one of the following properties is satisfied:
\begin{itemize}
 \item $w$ is decreasing on $[-a,a]$, while $m$ is increasing on $[-a,\bar x]$ and decreasing on $[\bar x,a]$,
\item $m$ is decreasing on $[-a,a]$, while $w$ is increasing on $[-a,\bar x]$ and decreasing on $[\bar x,a]$.
\end{itemize}
\end{lem}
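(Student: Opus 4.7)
The plan is a phase-plane analysis in the $(w,m)$-variables, tracking how the trajectory $x \mapsto (w(x), m(x))$ moves from $(w^*, m^*)$ at $x=-a$ to $(0,0)$ at $x=a$. First I would pin down the behavior at the right endpoint: since $w, m > 0$ on $[-a, a)$ by Proposition~\ref{lem:samesolbox} and both vanish at $a$, rewriting each equation in the form $-c u' - u'' + k(x) u \geq 0$ (with $k$ a bounded coefficient coming from the $L^\infty$-control of Proposition~\ref{thm:precisebound}) and applying Hopf's lemma at $x = a$ yields $w'(a) < 0$ and $m'(a) < 0$. Both $w$ and $m$ are therefore strictly decreasing in a left neighborhood of $a$.

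Next I would exploit the sign constraints at interior critical points. At any interior local maximum $x_0$ of $w$ the conditions $w'(x_0) = 0$ and $w''(x_0) \leq 0$ together with the equation force $f_w(w(x_0), m(x_0)) \geq 0$, which places $(w(x_0), m(x_0))$ on the ``positive'' side of the nullcline $\{f_w = 0\}$ (the blue curve in Figure~\ref{fig-phase}); the reverse inequality holds at a local minimum, and the analogous statements hold for $m$ with respect to $\{f_m = 0\}$. Under Assumption~\ref{ass}, these two nullclines meet inside $[0,1] \times [0, K]$ only at $(0, 0)$ and at $(w^*, m^*)$.

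The core of the proof is then a compartment analysis. I would partition $[0,1] \times [0, K]$ into the four compartments cut out by the lines $\{w = w^*\}$ and $\{m = m^*\}$ passing through the equilibrium. The trajectory starts at the intersection of these two lines and must terminate in the lower-left compartment at $(0, 0)$, where by Step~1 both components are decreasing. Supposing for contradiction that both $w$ and $m$ admit an interior local maximum, say at $x_w < x_m$, one shows that the trajectory must leave and re-enter a compartment, crossing a nullcline in a direction that contradicts the sign constraints of the previous step; a symmetric argument rules out interior local minima of either function. This produces the dichotomy of the lemma, with $\bar x$ being the unique interior maximum if one exists, and $\bar x = -a$ otherwise. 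The strict localization $\bar x < 0$ would then come from comparing the ascending half-trajectory on $[-a, \bar x]$ with the descending half on $[\bar x, a]$, exploiting the asymmetry between the Dirichlet data $(w^*, m^*)$ at $-a$ and $(0, 0)$ at $a$.

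The hard part will be the oscillation-exclusion in the compartment analysis: the sign conditions at extrema are purely local and do not by themselves prevent the trajectory from crossing a nullcline several times. I expect to close the argument by chaining backward from the last critical point of each function, combined with the maximum principle on maximal intervals of fixed monotonicity of $w$ and $m$, so that the positivity of $w, m$ and the boundary conditions at $x = \pm a$ force a contradiction with any hypothetical additional extremum. The strict bound $\bar x < 0$ is a second, more delicate point that will likely require an explicit comparison using the specific Dirichlet values rather than a purely local computation.
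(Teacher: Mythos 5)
Your overall strategy (phase plane in the $(w,m)$-variables, sign of $f_w$ and $f_m$ at interior critical points, compartments cut out by $\{w=w^*\}$ and $\{m=m^*\}$) is the same as the paper's, and your Step~2 is exactly the paper's Step~2. But the proposal stops short of the actual proof at precisely the point you yourself flag as ``the hard part'': the exclusion of additional extrema is not carried out, and the ingredient that makes it work in the paper is missing from your plan. That ingredient is Lemma~\ref{lem:zerosinclusions}: the inclusions $Z_w^-\cap\mathcal D_l\subset Z_m^-$, $Z_m^-\cap\mathcal D_r\subset Z_w^-$ (and their $+$ counterparts), which follow from the monotonicity and convexity of the nullclines $\phi_w$, $\varphi_m$ together with the uniqueness of $(w^*,m^*)$. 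These inclusions are what convert the purely local sign condition at a critical point of \emph{one} component into a sign condition on the reaction term of the \emph{other} component at the same point: e.g.\ if $w'$ threatens to turn positive while the trajectory sits in $\mathcal D_l$, then $f_w\le 0$ there, hence by the inclusion $f_m<0$, hence $m'$ cannot vanish and the trajectory is pushed into the region $\{f_w<0\}\cap\{f_m<0\}$ with both derivatives positive, where the ``both increasing forever'' argument contradicts $w(a)=m(a)=0$. Without the inclusions, your compartment analysis does not close, because (as you note) nothing prevents repeated nullcline crossings.

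Two further points. First, the paper propagates \emph{forward} from $x=-a$, classifying by the signs of $w'(-a)$ and $m'(-a)$; this is what guarantees that the first monotonicity change occurs while the trajectory is still in $\mathcal D_l$ or $\mathcal D_r$ (since $(w,m)(-a)=(w^*,m^*)$ is the common corner of the compartments), which is exactly where the inclusions apply. Your backward chaining from $x=a$ starts near $(0,0)$, where no such inclusion is available, so that orientation of the argument would need to be reworked. You would also need to handle the degenerate situations the paper treats explicitly: critical points where the derivative vanishes without changing sign cleanly (the paper argues via a sequence $x_n\to\bar x^+$ with alternating signs of $w'$ and a first-order expansion of $f_w$ along the trajectory), and the cases $w'(-a)=0$ or $m'(-a)=0$. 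Second, your proposed proof of $\bar x<0$ by ``comparing the ascending and descending half-trajectories'' is not what is needed: in the paper this follows from the normalization $w(0)+m(0)<\min(w^*,m^*)$ inherited from \eqref{pbnormbox}, not from any symmetry argument, so this step as you describe it would not go through.
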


\begin{proof}[Proof of Lemma \ref{lem:plan_phase}]

\noindent\emph{Step 1: sign of $f_w$ and $f_m$.} 

We recall the definition \eqref{def_f} of $f_w,\,f_m$. 
The inequality $f_w(w,m)\geq 0$ is equivalent, for $w\in[0,1]$ and $m\in[0,K]$, to
\begin{equation}\label{def:phiw}
 w\leq \phi_w(m):= \frac 12\left[1-\mu-m+\sqrt{(1-\mu-m)^2+4\mu m}\right].
\end{equation}
Notice that $m\in[0,K]\mapsto \phi_w(m)$ is a decreasing function (see Lemma~\ref{lem:assumption2}), that divides the square $\{(w,m)\in[0,1]\times[0,K]\}$ into two parts. 

Similarly, $f_m(w,m)\geq 0$ is equivalent, for $w\in[0,1]$ and $m\in[0,K]$, to
\begin{equation}\label{def:phim}
 m\leq \phi_m(w):= \frac 12\left[K-\frac{\mu K}{r}-w+\sqrt{\left(K-\frac{\mu K}{r}-w\right)^2+4\frac{\mu K}{r}w}\right].
\end{equation}
Here also, $w\in[0,1]\mapsto \phi_m(w)$ is a decreasing function (see Lemma~\ref{lem:assumption2}), since $\mu\leq 1/2$ (see Assumption \ref{ass}), that divides the square $\{(w,m)\in[0,1]\times[0,K]\}$ into two parts. 



\medskip

\noindent\emph{Step 2: possible monotony changes of $w(x),\,m(x)$.} 
Let $(c,w,m)\in \mathbb R_+\times C^0([-a,a])^2$ be a solution of \eqref{eq:pbbox}. If $w'(x)\geq 0$ for some $x> -a$, we can define $\bar x:=\inf\{y\geq x;\,w'(y)< 0\}$. Then $w'(\bar x)=0$, and $w''(\bar x)\leq 0$, which implies
$$f_w(w(\bar x),m(\bar x))=-cw'(\bar x)-w''(\bar x) \geq 0,$$
that is $w(\bar x)\leq \phi_w(m(\bar x))$. The symmetric property also holds: if $w'(x)\leq 0$ for some $x> -a$, we can define $\bar x:=\inf\{y\geq x;\,w'(y)> 0\}$, and then, $w(\bar x)\geq \phi_w(m(\bar x))$.

We repeat the argument for the function $m$: let $(c,w,m)\in \mathbb R_+\times C^0([-a,a])^2$ be a solution of \eqref{eq:pbbox}. If $m'(x)\geq0$ for some $x> -a$, we can define $\bar x:=\inf\{y\geq x;\,m'(y)< 0\}$, and then, $m(\bar x)\leq \phi_m(w(\bar x))$. Finally, if $m'(x)\leq 0$ for some $x> -a$, we can define $\bar x:=\inf\{y\geq x;\,m'(y)> 0\}$, and then, $m(\bar x)\geq \phi_m(w(\bar x))$.

\medskip

\noindent\emph{Step 3: phase plane analysis}
Notice that $(w(-a),m(-a))=(w^*,m^*)$, and then,
\begin{equation}\label{phi_bar_mw}
m(-a)=\phi_m(w(-a)),\quad w(-a)=\phi_w(m(-a)).
\end{equation}
We will consider now consider individually the four possible signs of $w'(-a), $ $ m'(-a)$ (the cases where $w'(-a)=0$ or $m'(-a)=0$ will be considered further in the proof):
%
%
%

(i) Assume that $w'(-a)>0$ and $m'(-a)>0$. We define $\bar x:=\inf\{y\geq {\color{red}-a};\,w'(y)< 0\textrm{ or }m'(y)<0\}$. Since $w$ and $m$ are increasing on $[-a,\bar x]$, \eqref{phi_bar_mw} holds and $w\mapsto \phi_m(w)$, $m\mapsto \phi_w(m)$ are decreasing functions, we have 
$$w(\bar x)>w(-a)=\phi_w(m(-a))\geq\phi_w(m(\bar x)),$$
$$m(\bar x)>m(-a)=\phi_m(w(-a))\geq\phi_m(w(\bar x)).$$
Then, $f_w(w(\bar x),m(\bar x))<0$ and $f_m(w(\bar x),m(\bar x))<0$. It then follows from Step~2 that $\bar x=a$, which means that $w$ and $m$ are increasing on $[-a,a]$. It is a contradiction, since $0=w(a)<w(-a)=\bar w$.

Notice that the same argument would also work \qg{on $[x,a]$, for any $ (w(x), m(x)) $ that satisfies $ w(x)>\Phi_w(m(x))$, $m(x)> \Phi_m(w(x))$, $w'(x)>0$ and $m'(x)>0$.} 

(ii) Assume that $w'(-a)<0$ and $m'(-a)<0$. Let $\bar x:=\inf\{y\geq -a;\,w'(y)> 0\textrm{ or }m'(y)>0\}$. Since $w$ and $m$ are decreasing on $[-a,\bar x]$, \eqref{phi_bar_mw} holds and $w\mapsto \phi_m(w)$, $m\mapsto \phi_w(m)$ are decreasing functions, we have
$$w(\bar x)<w(-a)=\phi_w(m(-a))\leq\phi_w(m(\bar x)),$$
$$m(\bar x)<m(-a)=\phi_m(w(-a))\leq \phi_m(w(\bar x)).$$
It then follows from Step~2 that $\bar x=a$, which means that $w$ and $m$ are non-increasing on $[-a,a]$. Notice that this is not a contradiction, since $w(a)=0<w^*=w(-a)$, $m(a)=0<m^*=m(-a)$.

Notice that the same argument would work \qg{on $[x,a]$, for any $ (w(x), m(x)) $ that satisfies $ \Phi_w(m(x))> w(x) $, $ m(x)<\Phi_m(w(x))$, $w'(x)<0$ and $m'(x)<0$. }

(iii) Assume that $w'(-a)<0$ and $m'(-a)>0$. We define $\bar x:=\inf\{y\geq -a;\,w'(y)> 0\textrm{ or }m'(y)<0\}$. The argument used in the two previous cases cannot be employed here.
We know however that  $w(\bar x)<w^*$, $m(\bar x)>m^*$. Since $m(a)=0<m^*$, it implies in particular that $\bar x<a$, and, with the notations of Lemma~\ref{lem:zerosinclusions}, $(w(\bar x),m(\bar x))\in \mathcal D_l$.


If $w'$ changes sign in $\bar x$, then Step 2 implies that $w(\bar x)\geq \phi_w(m(\bar x))$, that is, with the notations of Lemma~\ref{lem:zerosinclusions}, $(w(\bar x),m(\bar x))\in Z_w^-$. Thanks to Lemma~\ref{lem:zerosinclusions}, it follows that $(w(\bar x),m(\bar x))\in Z_w^-\cap \mathcal D_l\subset Z_m^-$, and then $m(\bar x)>\phi_m(w(\bar x))$, which implies $-cm'(\bar x)-m''(\bar x)=f_m(w(\bar x),m(\bar x))<0$. If $m'(\bar x)=0$, then $m''(\bar x)>0$, which is incompatible with the fact that  $m'\geq 0$ on $[-a,\bar x)$ and $m'(\bar x)=0$. We have thus shown that $m'(\bar x)>0$. Thanks to the definition of $ \bar x$, 
either $w$ is locally increasing near $ \bar x^+, $ or there exists a sequence $ (x_n)\to \bar x^+ $ such that $ w'(x_{2n})>0 $ and $ w'(x_{2n+1})<0. $ In the first case, for $ \varepsilon>0 $ small enough, $ w(\bar x+\varepsilon)>w(\bar x)\geq \Phi_w(m(\bar x))\geq\Phi_w(m(\bar x+\varepsilon))$ along with $ w'(\bar x+\varepsilon)>0. $ In the second case, $ w''(\bar x)=0 $, then $f_w(w, m)(\bar x)=0$, and a simple computation shows that for $ \varepsilon>0 $ small enough, 
\[ f_w\left(w(\bar x+\varepsilon), m(\bar x+\varepsilon)\right)=(\mu-w(\bar x))\varepsilon m'(\bar x)+o(\varepsilon)<0, \]
where we have used the fact that $\mu-w(\bar x)<0$ (since $w(\bar x)\geq \phi_w(m(\bar x))\subset \phi_w([0,K])\subset(\mu,\infty)$, see Lemma~\ref{lem:assumption2}). In any case, for some $\varepsilon>0$ arbitrarily small, $w(\bar x+\varepsilon)> \phi_w(m(\bar x+\varepsilon))$, $m(\bar x+\varepsilon)>\phi_m(w(\bar x+\varepsilon))$, along with $w'(\bar x+\varepsilon)>0$ and $m'(\bar x+\varepsilon)>0$. argument (i) can now be applied to $(w,m)|_{[\bar x+\varepsilon,a]}$, leading to a contradiction.

If $m'$ changes sign in $\bar x$, then Step 2 implies that $m(\bar x)\leq \phi_m(w(\bar x))$, that is, with the notations of Remark~\ref{Rk:zerosinclusions}, $(w(\bar x),m(\bar x))\in Z_m^+$. Thanks to Remark~\ref{Rk:zerosinclusions}, it follows that $(w(\bar x),m(\bar x))\in Z_m^+\cap \mathcal D_l\subset Z_w^+$, and then $w(\bar x)<\phi_w(m(\bar x))$,  which implies $-cw'(\bar x)-w''(\bar x)=f_w(w(\bar x),m(\bar x))>0$. If $w'(\bar x)=0$, then $w''(\bar x)<0$, which is incompatible with the fact that  $w'\leq 0$ on $[-a,\bar x)$ and $w'(\bar x)=0$. We have thus shown that $w'(\bar x)<0$. Thanks to the definition of $ \bar x$, 
either $m$ is locally decreasing near $ \bar x^+, $ or there exists a sequence $ (x_n)\to \bar x^+ $ such that $ m'(x_{2n})>0 $ and $ m'(x_{2n+1})<0. $ In the first case, for $ \varepsilon>0 $ small enough, $ m(\bar x+\varepsilon)<m(\bar x)\leq \Phi_m(w(\bar x))\leq\Phi_m(w(\bar x+\varepsilon))$ along with $ m'(\bar x+\varepsilon)>0. $ In the second case,  $ m''(\bar x)=0 $, then $f_m(w, m)(\bar x)=0$, and a simple computation shows that for $ \varepsilon>0 $ small enough, 
\[ f_m\left(w(\bar x+\varepsilon), m(\bar x+\varepsilon)\right)=\left(\mu-\frac{r}{K}m(\bar x)\right)\varepsilon w'(\bar x))+o(\varepsilon)>0, \]
where we have used the fact that $\mu-\frac{r}{K}m(\bar x)<0$  (since $m(\bar x)>m^*=\phi_m(w^*)\subset \phi_m([0,1])\subset (\mu K/r,\infty)$, see Lemma~\ref{lem:assumption2}). In both cases, argument~(ii) can now be applied to $(w,m)|_{[\bar x+\varepsilon,a]}$, which is not a contradiction, since $w(a)=0<w(\bar x)$, $m(a)=0<m(\bar x)$.

(iv) Assume that $w'(-a)>0$ and $m'(-a)<0$. We define $\bar x:=\inf\{y\geq -a;\,w'(y)< 0\textrm{ or }m'(y)>0\}$. Then $w(\bar x)>w^*$, $m(\bar x)<m^*$. Since $m(a)=0<m^*$, it implies in particular that $\bar x<a$, and, with the notations of Lemma~\ref{lem:zerosinclusions}, $(w(\bar x),m(\bar x))\in \mathcal D_r$.

If $w'$ changes sign in $\bar x$, then Step 2 implies that $w(\bar x)\leq \phi_w(m(\bar x))$, that is, with the notations of Remark~\ref{Rk:zerosinclusions}, $(w(\bar x),m(\bar x))\in Z_w^+$. Thanks to Remark~\ref{Rk:zerosinclusions}, it follows that $(w(\bar x),m(\bar x))\in Z_w^+\cap \mathcal D_r\subset Z_m^+$, and then $m(\bar x)<\phi_m(w(\bar x))$, which implies $-cm'(\bar x)-m''(\bar x)=f_m(w(\bar x),m(\bar x))>0$. If $m'(\bar x)=0$, then $m''(\bar x)<0$, which is incompatible with the fact that  $m'\leq 0$ on $[-a,\bar x)$ and $m'(\bar x)=0$. We have thus shown that $m'(\bar x)<0$. Thanks to the definition of $ \bar x$, 
either $w$ is locally decreasing near $ \bar x^+, $ or there exists a sequence $ (x_n)\to \bar x^+ $ such that $ w'(x_{2n})>0 $ and $ w'(x_{2n+1})<0. $ In the first case, for $ \varepsilon>0 $ small enough, $ w(\bar x+\varepsilon)<w(\bar x)\leq \Phi_w(m(\bar x))\leq\Phi_w(m(\bar x+\varepsilon))$ along with $ w'(\bar x+\varepsilon)<0. $ In the second case, $ w''(\bar x)=0 $, then $f_w(w, m)(\bar x)=0$, and a simple computation shows that for $ \varepsilon>0 $ small enough, 
\[ f_w(w, m)(\bar x+\varepsilon)=(\mu-w(\bar x))\varepsilon m'(\bar x)+o(\varepsilon)>0, \]
where we have used the fact that $\mu-w(\bar x)<0$  (since $w(\bar x)>w^*=\phi_w(m^*)\subset \phi_w([0,1])\subset (\mu,\infty)$, see Lemma~\ref{lem:assumption2}). In both cases, argument~(ii) can now be applied to $(w,m)|_{[\bar x+\varepsilon,a]}$, which is not a contradiction, since $w(a)=0<w(\bar x)$, $m(a)=0<m(\bar x)$.

If $m'$ changes sign in $\bar x$, then Step 2 implies that $m(\bar x)\geq \phi_m(w(\bar x))$, that is, with the notations of Lemma~\ref{lem:zerosinclusions}, $(w(\bar x),m(\bar x))\in Z_m^-$. Thanks to Lemma~\ref{lem:zerosinclusions}, it follows that $(w(\bar x),m(\bar x))\in Z_m^-\cap \mathcal D_r\subset Z_w^-$, and then $w(\bar x)>\phi_w(m(\bar x))$, which implies $-cw'(\bar x)-w''(\bar x)=f_w(w(\bar x),m(\bar x))<0$. If $w'(\bar x)=0$, then $w''(\bar x)>0$, which is incompatible with the fact that  $w'\geq 0$ on $[-a,\bar x)$ and $w'(\bar x)=0$. We have thus shown that $w'(\bar x)>0$. Thanks to the definition of $\bar x$, 
either $m$ is locally increasing near $ \bar x^+, $ or there exists a sequence $ (x_n)\to \bar x^+ $ such that $ m'(x_{2n})>0 $ and $ m'(x_{2n+1})<0. $ In the first case, for $ \varepsilon>0 $ small enough, $ m(\bar x+\varepsilon)>m(\bar x)\geq \Phi_m(w(\bar x))\geq\Phi_m(w(\bar x+\varepsilon))$ along with $ m'(\bar x+\varepsilon)>0. $ In the second case,  $ m''(\bar x)=0 $, then $f_m(w, m)(\bar x)=0$, and a simple computation shows that for $ \varepsilon>0 $ small enough, 
\[ f_m(w, m)(\bar x+\varepsilon)=\left(\mu-\frac{r}{K}m(\bar x)\right)\varepsilon w'(\bar x)+o(\varepsilon)<0,\]
where we have used the fact that $\mu-\frac{r}{K}m(\bar x)<0$  (since $m(\bar x)\geq \phi_m(w(\bar x))\subset \phi_m([0,1])\subset(\mu K/r,\infty)$, see Lemma~\ref{lem:assumption2}). In both cases, argument~(i) can now be applied to $(w,m)|_{[\bar x+\varepsilon,a]}$, leading to a contradiction.

\medskip

Let consider now the case where $w'(-a)=0$ or $m'(-a)=0$. If $w'(-a)=m'(-a)=0$, then $w\equiv w^*$, $m\equiv m^*$, which is a contradiction. Assume w.l.o.g. that $w'(-a)\neq 0$. If there exists $\varepsilon>0$ such that for any $x\in [-a,-a+\varepsilon]$, $m'(x)=0$, then $m$ is constant on the interval $[-a,-a+\varepsilon]$, and then $f_m(w(x),m(x))=0$ for $x\in [-a,-a+\varepsilon]$. This implies in turn that $m(x)=\phi_m(w(x))$, and then $w$ is constant on $[-a,-a+\varepsilon]$, since $\phi_m$ is a decreasing function, which is a contradiction. There exists thus a sequence $x_n\to -a$, $x_n>-a$, such that $w(x_n)\neq w^*$, and $\textrm{sgn}(m(x_n)-m^*)=\textrm{sgn}(m'(x_n))\neq 0$, while $\textrm{sgn}(w(x_n)-w^*)=\textrm{sgn}(w'(0))\neq 0$. The above argument (i-iv) can therefore be reproduced for $(w,m)|_{[x_n,a]}$. 

Finally, the fact that $\bar x\leq 0$ is a consequence of $w(0)+m(0)<\min (w^*,m^*)$.
\end{proof}

\begin{prop}\label{prop:monotonicity}
Let $r,\,K,\,\mu$ satisfy Assumption~\ref{ass}. Let $(c,w,m)\in  \mathbb R_+\times C^0(\mathbb R)^2$ be a solution of \eqref{systemefront} constructed in Theorem \ref{thm:main}. Then, there exists $\bar x\in[-\infty,0)$ such that
\begin{itemize}
 \item either $w$ is decreasing on $\mathbb R$, while $m$ is increasing on $(-\infty,\bar x]$ and decreasing on $[\bar x,\infty)$,
\item or $m$ is decreasing on $\mathbb R$, while $w$ is increasing on $(-\infty,\bar x]$ and decreasing on $[\bar x,\infty)$,
\end{itemize}
Moreover, 
$$w(x)\to w^*,\quad m(x)\to m^*\textrm{ as }x\to-\infty.$$
\end{prop}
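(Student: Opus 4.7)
The plan is to push the phase-plane structure established in Lemma~\ref{lem:plan_phase} through the limiting procedure used to build the travelling wave in Proposition~\ref{prop:existence_front}, and then to identify the limit at $-\infty$ via an ODE-type argument.

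Recall that $(c,w,m)$ arises as a locally uniform $C^1$ limit (from interior elliptic estimates) of box solutions $(c_n,w_n,m_n)$ of \eqref{pbnormbox} on $[-a_n,a_n]$ with $a_n\to\infty$. First I would apply Lemma~\ref{lem:plan_phase} to each $(w_n,m_n)$: each falls into case~(a) or case~(b), with an associated apex $\bar x_n\in[-a_n,0)$ for the unimodal component. After a diagonal extraction I may assume that all $(w_n,m_n)$ belong to the same case, say case~(a), and that $\bar x_n\to \bar x\in[-\infty,0]$.

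Passing to the limit in the sign conditions on $w_n'$ and $m_n'$ produces $w'\leqslant 0$ on $\mathbb R$, $m'\geqslant 0$ on $(-\infty,\bar x)$ and $m'\leqslant 0$ on $(\bar x,\infty)$ (the first interval being empty if $\bar x=-\infty$). To upgrade to strict monotonicity I would rely on real-analyticity: since the reaction terms in \eqref{systemefront} are polynomial, $w$ and $m$ are real-analytic; if $w'$ vanished on an open interval it would vanish identically, so $w$ would be constant, and the system \eqref{systemefront} together with $w>0$ and $w(x)\to 0$ at $+\infty$ (Proposition~\ref{prop:propright}) would then force the trivial solution, a contradiction. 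The same mechanism gives strict monotonicity of $m$ on each side of $\bar x$. When $\bar x=-\infty$ this corresponds to case~(c) of Theorem~\ref{thm:monotonicity}; otherwise to case~(a). Picking case~(b) in the initial extraction gives case~(b) or~(c).

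It remains to identify the limit at $-\infty$. Strict monotonicity of $w$ on $\mathbb R$ (respectively of $m$ on $(-\infty,\bar x]$) ensures that $w_\infty:=\lim_{x\to-\infty}w(x)\in[0,1]$ and $m_\infty:=\lim_{x\to-\infty}m(x)\in[0,K]$ exist. Translating by $x_n\to-\infty$ and invoking interior elliptic regularity, the shifted solutions $w(\cdot+x_n),\,m(\cdot+x_n)$ are bounded in $C^k_{\mathrm{loc}}$ and monotonicity forces their sublimits to be the constants $w_\infty,m_\infty$; passing to the limit in \eqref{systemefront} then yields $f_w(w_\infty,m_\infty)=f_m(w_\infty,m_\infty)=0$. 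By Assumption~\ref{ass} (see Appendix~\ref{appendix_reaction_terms}), the only non-negative zeros of this algebraic system are $(0,0)$ and $(w^*,m^*)$, and the constraint $\liminf_{x\to-\infty}(w+m)>0$ inherited from Theorem~\ref{thm:main} excludes $(0,0)$. Hence $(w_\infty,m_\infty)=(w^*,m^*)$.

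The main obstacle should be organising the double extraction cleanly so that the same case of Lemma~\ref{lem:plan_phase} is preserved for all $n$ while $\bar x_n$ also converges; once this is done, the analyticity-based upgrade from weak to strict monotonicity is essentially automatic, and the identification of the left-end limit as $(w^*,m^*)$ reduces to the classification of stationary points and the $\liminf$ bound from Theorem~\ref{thm:main}.
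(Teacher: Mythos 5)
Your proposal follows essentially the same route as the paper: apply Lemma~\ref{lem:plan_phase} to the box solutions $(c_n,w_n,m_n)$, extract so that they all fall in the same case with $\bar x_n\to\bar x\in[-\infty,0]$, pass the monotonicity to the locally uniform limit, and identify the limit at $-\infty$ as the unique nontrivial zero $(w^*,m^*)$ of $(f_w,f_m)$ using $\liminf_{x\to-\infty}(w+m)>0$. The two small deviations — using real-analyticity to upgrade non-strict to strict monotonicity (a point the paper glosses over when it asserts that a uniform limit of decreasing functions is decreasing), and a translation-compactness argument instead of the paper's direct ``monotone bounded regular $\Rightarrow$ $w',w''\to0$'' step — are both sound and do not change the structure of the argument.
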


\begin{proof}[Proof of Proposition \ref{prop:monotonicity}]
The travelling wave $(c,w,m)$ constructed in Theorem \ref{thm:main} is obtained as a limit (in $L^\infty_{loc}(\mathbb R)$) of solutions $(w_n,m_n,c_n)\in \mathbb R_+\times C^0([-a_n,a_n])^2$ of \eqref{eq:pbbox} on $[-a_n,a_n]$, with $a_n\underset{n\to\infty}{\longrightarrow}\infty$. Each of those solutions then satisfy one of the two the monotonicity properties of Lemma~\ref{lem:plan_phase}. In particular, there is at least one of those properties that is satisfied by an infinite sequence of solutions $(w_n,m_n,c_n)$. We may then assume w.l.o.g. that all the solutions $(w_n,m_n,c_n)$ satisfy the first monotonicity property in Lemma~\ref{lem:plan_phase}. We assume therefore that for all $n\in\mathbb N$, there exists $\bar x_n\in[-a_n,0)$ such that $w_n$ is decreasing on $[-a_n,a_n]$, while $m_n$ is increasing on $[-a_n,\bar x_n]$ and decreasing on $[\bar x_n,a_n]$. Up to an extraction, we can define $\bar x:=\lim_{n\to\infty}a_n\in [-\infty,0]$. Then, $w$ is a uniform limit of decreasing function on any bounded interval, and is thus decreasing. Let now $\tilde x>\bar x$.  $m_n$ is then a decreasing function on $[\tilde x,\infty)\cap[-a_n,a_n]$ for $n$ large enough, and $m|_{[\tilde x,\infty)}$ is thus a uniform limit of decreasing functions on any bouded interval of $[\tilde x,\infty)$. This implies that $m$ is decreasing on $[\bar x,\infty)$. A similar argument shows that $m$ is increasing on $(-\infty,\bar x]$, if $\bar x>-\infty$. the case where all the solutions $(w_n,m_n,c_n)$ satisfy the second monotonicity property in Lemma~\ref{lem:plan_phase} can be treated similarly.


\medskip

We have shown in particular that $w$, $m$ are monotonic on $(-\infty,\tilde x)$, for some $\tilde x<0$ ($\tilde x=\bar x$ if $\bar x>-\infty$, $\tilde x=0$ otherwise). Since $w$ and $m$ are regular bounded functions, it implies that 
\[ f_w(w(x),m(x))=-cw'(x)-w''(x)\to 0, \]
\[ f_m(w(x),m(x))=-cm'(x)-m''(x)\to 0, \]
 as $ x\to -\infty.$
This combined to $\liminf_{x\to-\infty}w(x)+m(x)>0$  and $(w,m)\in[0,1]\times [0,K]$ implies that $w(x)\to w^*$ and $m(x)\to m^*$ as $x\to-\infty.$
\end{proof}

\subsection{Case of a small mutation rate}
\label{sub:small_mu}

The result of this subsection shows that if $\mu>0$ is small, then only the first situation described in Lemma \ref{lem:plan_phase}, with $\bar x>-\infty$, is possible.

\begin{prop}\label{prop:monotonicity_mu_petit}
Let $r,\,K,\,\mu$ satisfy Assumption~\ref{ass}. Let $(c,w,m)\in \mathbb R_+\times C^0(\mathbb R)$ be a solution of \eqref{systemefront} constructed in Theorem \ref{thm:main}. There exists $\bar \mu=\bar \mu(r,K)>0$ such that $\mu<\bar \mu$ implies that $w$ is decreasing on $\mathbb R$, while  $m$ is increasing on $(-\infty,\bar x]$ and decreasing on $[\bar x,\infty)$, for some $\bar x\in \mathbb R_-$.
\end{prop}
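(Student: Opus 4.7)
The plan is to argue by contradiction. Suppose there is a sequence $\mu_n\to 0$ (with $r$ and $K$ fixed) for which the travelling wave $(c_n,w_n,m_n)$ provided by Theorem~\ref{thm:main} fails to satisfy property (a) with a finite $\bar x\in\mathbb R_-$. Using Proposition~\ref{prop:monotonicity} and extraction I reduce to two situations: (I) property (b) holds with some finite $\bar x_n\in(-\infty,0)$; or (II) both $w_n$ and $m_n$ are decreasing on $\mathbb R$ (this covers (c) and the degenerate limits $\bar x=-\infty$). The key facts as $\mu_n\to 0$ are $c_n\to 2\sqrt r$ (from \eqref{eq:defminc}), $w^*_n\to 1$ and $m^*_n=\mathcal O(\mu_n)\to 0$; in particular, whenever $m_n$ is decreasing on $\mathbb R$ one has $m_n\le m^*_n$, so $m_n\to 0$ uniformly.

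For Case (I), I would translate to place the maximum of $w_n$ at the origin: $\tilde w_n(x):=w_n(x+\bar x_n)$, $\tilde m_n(x):=m_n(x+\bar x_n)$. Since $\tilde m_n\to 0$ uniformly, the elliptic regularity of Proposition~\ref{thm:regularity} and Arzel\`a--Ascoli give a subsequence with $\tilde w_n\to\tilde w_\infty$ in $C^2_{\mathrm{loc}}$, where $\tilde w_\infty\in[0,1]$ solves the Fisher-KPP equation $-2\sqrt r\,\tilde w_\infty'-\tilde w_\infty''=\tilde w_\infty(1-\tilde w_\infty)$. Monotonicity of $w_n$ on $(-\infty,\bar x_n]$ together with $w_n(-\infty)=w^*_n$ and $w^*_n\to 1$ forces $\tilde w_\infty(0)=1$; since $0$ is a local maximum, also $\tilde w_\infty'(0)=0$. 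Viewing the second-order ODE as a first-order autonomous system in $(\tilde w_\infty,\tilde w_\infty')$, Cauchy--Lipschitz forces $\tilde w_\infty\equiv 1$. The normalisation $(w_n+m_n)(0)=\nu_0<1$ becomes $(\tilde w_n+\tilde m_n)(-\bar x_n)=\nu_0$; with $\bar x_n$ bounded (otherwise we fall into Case (II)), passing to the limit yields $\tilde w_\infty(-\bar x_\infty)=\nu_0<1$, contradicting $\tilde w_\infty\equiv 1$.

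For Case (II), both $w_n,m_n$ are decreasing, so $w_n\to w_\infty$ locally uniformly with $w_\infty$ a Fisher-KPP front at the non-minimal speed $2\sqrt r$; this is not absurd by itself, and a finer asymptotic analysis at $+\infty$ is required. I would linearise \eqref{systemefront} at $(0,0)$ and exploit that at the minimal speed $c_*=2\sqrt{h_+}$ the characteristic equation associated with the principal eigenvalue $h_+$ of the matrix $M$ appearing in \eqref{def:hX} has a double root $\lambda_0=c_*/2$, so that the space of solutions decaying at $+\infty$ contains both $X\,e^{-\lambda_0 x}$ and $xX\,e^{-\lambda_0 x}$. A direct computation shows the secondary eigenvector $Y$ of $M$ associated with $h_-$ has components of opposite signs ($Y\propto(1,-\mu/(r-1))+\mathcal O(\mu^2)$), so positivity of $w_n,m_n$ forces the coefficient of the slowest exponential mode $Y\,e^{-\lambda_-^- x}$ (with $\lambda_-^-=\sqrt{h_+}-\sqrt{h_+-h_-}$) to vanish. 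The leading asymptotic at $+\infty$ therefore has the form $(w_n,m_n)(x)\sim (A_n+B_n x)\,X\,e^{-\lambda_0 x}$ with $B_n\neq 0$ (the signature of the minimal-speed front), and since $X_2=2\mu_n>0$ the induced profile $m_n(x)\sim B_n X_2(x-x_n^*)\,e^{-\lambda_0 x}$ has a strict local maximum at $x=x_n^*+1/\lambda_0$, contradicting $m_n$ being decreasing on $\mathbb R$.

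I expect Case (II) to be the main obstacle: one must verify rigorously that the $xe^{-\lambda_0 x}$ coefficient $B_n$ is actually non-zero for the constructed wave and that the predicted non-monotonicity of $m_n$ occurs inside the region where the linearisation at $(0,0)$ faithfully describes $(w_n,m_n)$. Both facts are standard features of minimal-speed KPP-type fronts, but they require a careful application of the stable-manifold theorem to the four-dimensional ODE system associated with \eqref{systemefront}.
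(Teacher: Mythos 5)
Your reduction to two cases and your treatment of Case (I) are broadly reasonable (though the sub-case $\bar x_n\to-\infty$ inside alternative (b) does not literally ``fall into Case (II)'', since $w_n$ still has an interior maximum for each $n$; in practice your Case (II) argument only uses that $m_n$ is decreasing on $\mathbb R$, so it would have to carry that sub-case as well). The genuine gap is in Case (II), which is where all the content lies. Even granting the full asymptotic expansion at $+\infty$ and the vanishing of the slow $Y$-mode, the leading profile $(A_n+B_nx)\,X\,e^{-\lambda_0 x}$ with $B_n>0$ is \emph{decreasing} for all large $x$; its formal critical point $x_n^*+1/\lambda_0$ sits at a finite location where the linearisation at $(0,0)$ has no reason to describe the solution (indeed the expansion is negative for $x<x_n^*$, so it certainly does not describe the positive solution there). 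The scalar Fisher-KPP front at minimal speed has exactly the asymptotic $u(x)\sim(A+Bx)e^{-\lambda_0 x}$ with $B>0$ and is nonetheless globally decreasing, so no contradiction with the monotonicity of $m_n$ can be extracted from the behaviour at $+\infty$ alone. The property you are trying to exploit is not a ``standard feature of minimal-speed KPP-type fronts''; it fails already for the scalar minimal front.

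The paper's proof uses a completely different mechanism, located in the intermediate region rather than at $+\infty$, and it handles your Cases (I) and (II) simultaneously. Assuming $\|m\|_\infty\le m^*=\mathcal O(\mu)$ (which holds whenever $m$ is decreasing on $\mathbb R$, i.e.\ in both of your cases), a supersolution argument shows that $w$ can grow leftward from the point $\bar x$ where $w=m^*$ at most at the exponential rate $\tfrac{c-\sqrt{c^2-4}}2$, hence $w+m$ stays small on an interval $[\tilde x,\bar x]$ of length of order $\tfrac2{c-\sqrt{c^2-4}}\ln(1/m^*)$. On that interval the $m$-equation has zeroth-order coefficient at least $\tfrac{1+r}2$ and a mutation source $\mu w\ge\mu m^*$, so a subsolution propagates the seed $m(\bar x-1/2)\gtrsim\mu m^*$ leftward at the faster rate $\tfrac{c-\sqrt{c^2-2(1+r)}}2$. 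Since $r>1$, the ratio $\rho$ of these two rates exceeds $1$, and over the logarithmically long interval this yields $\|m\|_\infty\gtrsim\mu\,(m^*)^{1-\rho}\gg m^*$ for $\mu$ small --- a contradiction. This growth-rate comparison ($r$ versus $1$ in the zone where the competition terms are negligible) is the quantitative ingredient your argument is missing, and it is precisely the ``finer analysis'' you deferred.
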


\begin{proof}[Proof of Proposition \ref{prop:monotonicity_mu_petit}]
Notice that the solution $(c,w,m)$ satisfies the assumptions of Proposition \ref{prop:monotonicity}. 

Let us assume that $\|m\|_\infty\leq m^*$. We will show that this assumption leads to a contradiction if $\mu>0$ is small. Let $\bar x=\max\left\{x>-\infty;\, w(x)\geq  m^*\right\}$. Then $w$ satisfies $-cw'-w''\leq (1-\mu)w+\mu\,m^*$ on $(-\infty,\bar x]$. Since $(c,w,m)$ satisfies the assumptions of Proposition \ref{prop:monotonicity} and $w(\bar x)=m^*<w^*$, we have that $w(x)\geq  m^*$ for all $x\leq \bar x$. $w$ thus satisfies $-cw'-w''\leq w$ on $(-\infty,\bar x]$. We define now
$$\bar w(x)=m^*\, e^{\frac{c-\sqrt{c^2-4}}2(\bar x-x)},$$
which satisfies $-c\bar w'-\bar w''=\bar w$ on $(-\infty,\bar x]$, $\bar w(\bar x)=w(\bar x)$, and $\bar w(y)\geq 1\geq w(y)$ for $y<<0$. Since $w$ is bounded, $\alpha \bar w>w$ for $\alpha>0$ large enough. We can then define $\alpha^*:=\min\{\alpha>0;\, \alpha \bar w>w \textrm{ on }(-\infty,\bar x)\}$. If $\alpha^*>1$, there exists $x^*\in (-\infty,\bar x)$ such that $\alpha^* \bar w(x^*)= w(x^*)$, and then, $-c(\alpha^*\bar w- w)'(x^*)-(\alpha^*\bar w- w)''(x^*)>\alpha^*\bar w(x^*)- w(x^*)=0$, which is a contradiction, since $\alpha^* \bar w> w$ implies that $-c(\alpha^*\bar w- w)'(x^*)-(\alpha^*\bar w- w)''(x^*)\leq 0$. Thus,
\begin{equation}\label{borne_sup_w}
w(x)\leq\bar w(x)
,\textrm{ for }x\in(-\infty,\bar x].
\end{equation}
In particular, if we define 
\begin{equation}\label{def:tildex}
 \tilde x:=\bar x-\frac 2{c-\sqrt{c^2-4}}\ln\qg{\left(\frac K{m^*}\left(\frac 12-\frac\mu r-\frac 1{2r}\right)-1\right)},
\end{equation}
then \qg{ $ w(x)\leq K\left(\frac 12-\frac \mu r-\frac 1{2r}\right)-m^*$ } on $[\tilde x,\bar x]$. \qg{Notice that $m^*\to 0$ as $\mu\to 0$ (see Lemma~\ref{lem:estmu0}), and then $\frac K{m^*}\left(\frac 12-\frac \mu r-\frac 1{2r}\right)\to \infty$ as $\mu\to 0$; $\tilde x$ is then well defined as soon as $ \mu>0$ is small enough, and $\tilde x-\bar x\to -\infty$ as $\mu\to 0$.} This estimate applied to the equation on $m$ (see \eqref{systemefront}), implies, for $x\in[\tilde x,\bar x]$, that
$$-cm'(x)-m''(x)\geq r\left(1-\frac \mu r-\frac{m^*+w(x)}K\right)m(x)+\mu\,w\geq \frac {1+r}2m+\mu\,m^* ,\textrm{ for }x\in(\tilde x,\bar x],$$
where we have also used the fact that $w\geq m^*$ on $(-\infty,\bar x]$.

\medskip

We define next
$$\bar m_1:=-\frac{\mu\,m^*}{c+2}(x-\bar x)\left(x-(\bar x-1)\right),$$
which satisfies $-c\bar m_1'-\bar m_1''\leq \mu m^*$ as well as $\bar m_1\left(\bar x-1\right)=0\leq m\left(\bar x-1\right)$ and $\bar m_1(\bar x)=0\leq m(\bar x)$. The weak maximum principle (\cite{GT98}, Theorem 8.1) then implies that $ m(x)\geq \bar m_1(x)$ for all \qg{$x\in\left[\bar x-1, \bar x\right]$,} and in particular,
$$m(\bar x-1/2)\geq \qg{ \frac{\mu\,m^*}{4(c+2)}.}$$
We define (we recall the definition \eqref{def:tildex} of $\tilde x$)
$$\bar m_2:= \frac{\mu\, m^*}{4(c+2)}e^{\frac{c-\sqrt{c^2-2(1+r)}}2\left(\bar x-1/2-x\right)}-Ae^{\frac{c}2\left(\bar x-1/2-x\right)},$$
with $A= \frac{\mu\, m^*}{4(c+2)}e^{-\frac{\sqrt{c^2-2(r+1)}}2(\bar x-1/2-\tilde x)}$, so that $\bar m_2(\tilde x)=0$. $\bar m_2$ then satisfies $-c\bar m_2'-\bar m_2''< \frac {(1+r)}2\bar m_2$, since $c(c/2)-(c/2)^2> \frac{1+r}2$ (see \eqref{eq:defminc}).  Let now 
$$\alpha^*:=\max\{\alpha;m(x)\geq \alpha \bar m_2(x),\,\forall x\in [\tilde x,\bar x-1/2]\}.$$
$\alpha^*>0$, since $\min_{[\tilde x,\bar x-1/2]}m>0$. If $\alpha^*<1$, then $\alpha^*\bar m_2(\bar x-1/2)< \frac{\mu\,m^*}{4(c+2)}\leq m(\bar x-1/2)$, while $\alpha^*\bar m_2(\tilde x)=0<m(\tilde x)$. Then $\alpha^*\bar m_2\leq m$ on $[\tilde x,\bar x-1/2]$, and there exists $x^*\in[\tilde x,\bar x-1/2]$ such that $\alpha^*\bar m_2(x^*)=m(x^*)$, and

$$0\leq -c(\bar m_2-m)'(x^*)-(\bar m_2-m)''(x^*)<\frac {(1+r)}2(\bar m_2-m)(x^*)=0,$$
which is a contradiction. We have thus proven that $m\geq \bar m_2$ on $[\tilde x,\bar x-1/2]$, and in particular, for $\mu>0$ small enough, 
$$\|m\|_\infty\geq \bar m_2\left(\tilde x+\frac{2\ln(2)}{\sqrt{c^2-2(1+r)}}\right)=\frac{\mu\,m^*}{4(c+2)}e^{\frac{c\ln 2}{\sqrt{c^2-2(1+r)}}}e^{\frac{c-\sqrt{c^2-2(1+r)}}2\left(\bar x-1/2-\tilde x\right)}.$$
We recall indeed that $\tilde x-\bar x\to -\infty$ as $\mu\to 0$, and then $\tilde x+\frac{2\ln(2)}{\sqrt{c^2-2(1+r)}}\in [\tilde x,\bar x-1/2]$ if $\mu>0$ is small enough. Thanks to the definition of $\tilde x$, this inequality can be written
\begin{align*}
&\ln\left(\frac{4(c+2) \|m\|_\infty}{\mu\,m^*}\right)-\frac{c\ln 2}{\sqrt{c^2-2(1+r)}}\\
&\quad \geq \frac{c-\sqrt{c^2-2(1+r)}}2\left(-1/2+\frac 2{c-\sqrt{c^2-4}}\qg{\ln\left(\frac K{ m^*}\left(\frac 12-\frac \mu r-\frac 1{2r}\right)-1\right)}\right).
\end{align*}

We have assumed that $\|m\|_\infty=m^*$, thus, if  we denote by $\mathcal O_{\mu\sim 0^+}(1)$ a function of $\mu>0$ that is bounded for $\mu$ small enough, we get
$$\ln\left(\frac 1\mu\right)+\mathcal O_{\mu\sim 0^+}(1)\geq \frac{c-\sqrt{c^2-2(1+r)}}{c-\sqrt{c^2-4}}\ln\left(\frac 1{ m^*}\right).$$
Moreover, we know that $m^*\leq C \mu$ for some $C> 0$, see Lemma \ref{lem:estmu0}. Then, 
$$\ln\left(\frac 1\mu\right)+\mathcal O_{\mu\sim 0^+}(1)\geq \frac{c-\sqrt{c^2-2(1+r)}}{c-\sqrt{c^2-4}}\ln\left(\frac 1{\mu}\right),$$
which is a contradiction as soon as $\mu>0$ is small, since $r>1$.

\medskip

We have thus proved that for $\mu>0$ small enough, we have $\|m\|_\infty> m^*$. This estimate combined to Proposition~\ref{prop:monotonicity} proves Proposition~\ref{prop:monotonicity_mu_petit}.
\end{proof}

\section{Proof of Theorem \ref{thm:KPP}}
\label{sec:K_small}

Notice first that if we chose $\varepsilon>0$ small enough, then $0<\mu<K<\varepsilon$ implies that Assumption~\ref{ass} is satisfied.

We will need the following estimate on the behavior of travelling waves of \eqref{systemefront}:
\begin{prop}\label{prop:w_left}
Let $r,\,K,\,\mu$ satisfy Assumption~\ref{ass}. Let $(c,w,m)\in \qg{\mathbb R_+\times C^\infty(\mathbb R)\times C^\infty(\mathbb R)}$ be a solution of \eqref{systemefront}, such that $\liminf_{x\to -\infty}(w(x)+m(x))>0$. Then, $\liminf_{x\to -\infty} w(x)\geq 1-\mu-K$.

Moreover, if $ w(\bar x)<1-K-\mu$ for some $\bar x\in\mathbb R$, then $w$ is decreasing on $[\bar x,\infty)$.
\end{prop}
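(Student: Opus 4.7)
I would prove the \emph{moreover} statement first, then deduce the $\liminf$ bound from it. The central observation is that on the open set $\{w<1-K-\mu\}$, because $0\leq m\leq K$ and $w,m>0$ by Theorem~\ref{thm:main}, one has
\[
f_w(w,m)=w(1-w-m-\mu)+\mu m > 0,
\]
since $1-w-m-\mu>1-(1-K-\mu)-K-\mu=0$ and $\mu m>0$. Rewriting the $w$-equation as $(e^{cx}w')'=-e^{cx}f_w(w,m)<0$, the function $e^{cx}w'$ is strictly decreasing on every connected component of $\{w<1-K-\mu\}$.

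For the \emph{moreover} claim, I would analyze the connected components of $\{w<1-K-\mu\}$. A bounded component $(\alpha,\beta)$ is ruled out because $w(\alpha)=w(\beta)=1-K-\mu$ with $w$ strictly smaller inside forces $w'(\alpha)\leq 0\leq w'(\beta)$, which contradicts the strict monotonicity $e^{c\alpha}w'(\alpha)>e^{c\beta}w'(\beta)$. A left-unbounded component $(-\infty,\beta)$ is also impossible: using that $w'$ is bounded (so $e^{cx}w'\to 0$ as $x\to-\infty$ when $c>0$; the degenerate case $c=0$ is handled by the induced concavity $w''<0$ together with boundedness of $w$, which already forces $w'\leq 0$), strict decrease forces $w'<0$ on $(-\infty,\beta)$, but then $w>w(\beta)=1-K-\mu$ just to the left of $\beta$, contradicting $w<1-K-\mu$ on the component. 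The only surviving components are right-unbounded ones $(b,\infty)$, and on any such component the same strict monotonicity, combined with the boundary input ($w'(b)\leq 0$ if $b>-\infty$, or $e^{cx}w'\to 0$ at $-\infty$ if $b=-\infty$), gives $w'<0$. Since $\bar x$ lies in some component, and only a right-unbounded one is admissible, $w$ is strictly decreasing on $[\bar x,\infty)$.

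For the $\liminf$ bound I argue by contradiction. If $\liminf_{x\to-\infty}w<1-K-\mu$, choose $\varepsilon>0$ and $x_n\to-\infty$ with $w(x_n)<1-K-\mu-\varepsilon$. The \emph{moreover} part just proved forces $w$ to be decreasing on each $[x_n,\infty)$, hence on all of $\mathbb R$, so $L:=\lim_{x\to-\infty}w(x)$ exists with $L\leq 1-K-\mu-\varepsilon$. A standard translation and interior elliptic compactness argument applied to $(w(\cdot+x_n),m(\cdot+x_n))$ yields a subsequential $C^2_{\mathrm{loc}}$-limit $(\tilde w,\tilde m)$ solving \eqref{systemefront} with $\tilde w\equiv L$; plugging back into the first equation gives $f_w(L,\tilde m(x))=0$ for every $x$, an affine equation in $m$ with leading coefficient $\mu-L\neq 0$ (indeed $\mu<1/2$ under Assumption~\ref{ass}, so $f_w(\mu,\cdot)=\mu(1-2\mu)\neq 0$ rules out $L=\mu$), so $\tilde m$ is constant, and $f_m(L,\tilde m)=0$ as well. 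Under Assumption~\ref{ass}, the joint zeros of $(f_w,f_m)$ in $[0,1]\times[0,K]$ are only $(0,0)$ and $(w^*,m^*)$. The case $L=0$ forces $\tilde m=0$, whence $(w+m)(x_n)\to 0$, contradicting $\liminf_{x\to-\infty}(w+m)>0$. The case $L=w^*$ is ruled out by rearranging $f_w(w^*,m^*)=0$ as $w^*(w^*+m^*+\mu-1)=\mu m^*>0$, which combined with $m^*<K$ gives $w^*>1-m^*-\mu>1-K-\mu$, contradicting $L=w^*\leq 1-K-\mu-\varepsilon$.

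The most delicate step will be the connected-component bookkeeping in the \emph{moreover} part, particularly handling uniformly the case $c>0$ (where the weighted function $e^{cx}w'$ drives the argument) and the degenerate case $c=0$ (which requires invoking concavity instead), while ensuring that bounded and left-unbounded components are consistently ruled out.
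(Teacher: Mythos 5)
Your proof is correct, but it runs in the opposite direction from the paper's and uses different tools. The paper first observes that at any local minimum $\bar x$ of $w$ one has $0\geq -cw'(\bar x)-w''(\bar x)=f_w(w(\bar x),m(\bar x))>(1-\mu-K-w(\bar x))w(\bar x)$, so $w\geq 1-\mu-K$ at every local minimum; the $\liminf$ bound then follows because a tail on which $w<1-\mu-K$ would be minimum-free, hence eventually monotone with a limit $l$ at which the derivatives and therefore $f_w$ vanish, forcing $l=0$ and then $m\to 0$, contradicting $\liminf_{x\to-\infty}(w+m)>0$; the \emph{moreover} part is read off at the end from the absence of small local minima together with the $\liminf$ bound. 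You instead prove the \emph{moreover} part first, by integrating against $e^{cx}$ on the components of $\{w<1-K-\mu\}$: the strict decrease of the flux $e^{cx}w'$ there is a genuine strengthening of the paper's ``no local minimum'' observation, and it makes the monotonicity conclusion cleaner (the paper's ``no interior minimum $\Rightarrow$ monotone'' as stated only yields quasi-concavity, whereas your version gives strict decrease directly and handles $c=0$ explicitly). You then obtain the $\liminf$ bound by translation and elliptic compactness, identifying the limit at $-\infty$ as a constant joint zero of $(f_w,f_m)$ and excluding $(0,0)$ and $(w^*,m^*)$. Your route is somewhat heavier: it needs the boundedness of $w'$ (interior elliptic estimates) and the classification of joint zeros in the \emph{closed} square $[0,1]\times[0,K]$, for which Lemma~\ref{lem:zerof} only provides the interior statement --- the boundary check ($f_w(0,m)=\mu m$, $f_m(w,K)<0$, etc.) is easy but should be recorded. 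Like the paper, you implicitly invoke the a priori bounds $0<w<1$, $0<m<K$, which are not hypotheses of the proposition but hold for the solutions to which it is applied; with these small points made explicit, your argument is complete.
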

\begin{proof}[Proof of Proposition \ref{prop:w_left}]
 Since $m(x)< K$ for all $x\in\mathbb R$, any local minimum $\bar x$ of $w$ satisfies
\begin{eqnarray}
 0&\geq& -cw'(\bar x)-w''(\bar x)=(1-\mu-w(\bar x)-m(\bar x))w(\bar x)+\mu m(\bar x)\nonumber\\
&>&(1-\mu-K-w(\bar x))w(\bar x),\label{control_w}
\end{eqnarray}
and then $w(\bar x)\geq 1-\mu-K$. 

Assume that $\liminf_{x\to -\infty}w(x)<1-\mu-K$. Then, $x\mapsto w(x)$ can not have a minimum for $x<<0$, and is thus  monotonic for $x<<0$. Then $l:=\lim_{x\to-\infty}w(x)\in [0,1-\mu-K]$ exists and $w'(x)\to_{x\to -\infty}0$, $w''(x)\to_{x\to -\infty}0$. This implies $-cw'(x)-w''(x)\to_{x\to\infty} 0$, which, coupled to \eqref{control_w} implies that $l=0$ or $l=1-\mu-K$. $l=0$ leads to a contradiction, since $\liminf_{x\to -\infty}(w(x)+m(x))>0$, which proves the first assertion.

\medskip

To prove the second assertion, we notice that since $w$ cannot have a minimum $\tilde x\in\mathbb R$ such that $w(\tilde x)<1-K-\mu$, $w$ is monotonic on $\{x\in\mathbb R;\;w(x)<1-K-\mu\}$. This monotony combined to $\liminf_{x\to -\infty}w(x)\geq1-\mu-K>w(\bar x)$ implies that $w$ is decreasing on $[\bar x,\infty)$.
\end{proof}

The main idea of the proof of theorem~\ref{thm:KPP} is to compare $w$ to solutions of modified Fisher-KPP equations, which we introduce in the following lemma:
\begin{lem}
Let $r,\,K,\,\mu$ satisfy Assumptions~\ref{ass}. Let $(c,w,m)\in\qg{\mathbb R_+\times C^\infty(\mathbb R)\times C^\infty(\mathbb R)}$ be a solution of \eqref{systemefront}, with $c\geq 2+K$. Let also $\overline w\in C^\infty(\mathbb R)$, $\underline w\in C^\infty(\mathbb R)$ solutions of
\begin{equation}\label{overline_w}\left\{\begin{array}{l}
   -c\overline w'-\overline w''=\overline w(1-\overline w)+K,\\
\overline w(x)\to_{x\to-\infty} \frac{1+\sqrt{1+4K}}{2},\; \overline w(x)\to_{x\to+\infty}-\frac{\sqrt{1+4K}-1}{2},
  \end{array}\right.
\end{equation}

\begin{equation}\label{underline_w}\left\{\begin{array}{l}
   -c\underline w'-\underline w''=\underline w(1-2K-\underline w),\\
\underline w(x)\to_{x\to-\infty}1-2K,\; \underline w(x)\to_{x\to+\infty}0.\phantom{sfqsqdgdfg}
  \end{array}\right.
\end{equation}
%
%
  Assume $ \underline w(0)\leqslant w(0)\leqslant \overline w(0)$. Then 
 \[ \forall x\leqslant 0, \qquad \underline w(x)\leqslant w(x)\leqslant \overline w(x). \]
 \label{thm:slidingsubsupKPP}
\end{lem}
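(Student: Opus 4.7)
The plan is to prove both inequalities by a sliding argument, viewing $w$ as a subsolution of the equation satisfied by $\overline w$ and a supersolution of the equation satisfied by $\underline w$.

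For the upper bound, I would first observe that since $0 \le m \le K$ and $\mu \in (0,1)$,
\[ -cw'-w'' = w(1-w)-wm + \mu(m-w) \le w(1-w) + \mu m \le w(1-w) + K,\]
so $w$ is a subsolution of \eqref{overline_w}. Using that $\overline w$ is (strictly) decreasing with $\overline w(-\infty) = (1+\sqrt{1+4K})/2 > 1 \ge w$, the translates $\overline w_\tau(x) := \overline w(x-\tau)$ satisfy $\overline w_\tau \ge w$ on $(-\infty, 0]$ for $\tau$ large enough. Let $\tau^\ast := \inf\{\tau \ge 0 : \overline w_\tau \ge w \text{ on } (-\infty,0]\}$. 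If $\tau^\ast = 0$, we are done. Otherwise there is a contact point $x_0 \in (-\infty, 0]$ with $\overline w_{\tau^\ast}(x_0) = w(x_0) =: u_0$. The point $x_0 = 0$ is excluded because $\overline w_{\tau^\ast}(0) = \overline w(-\tau^\ast) > \overline w(0) \ge w(0)$ by monotonicity; and the limit $-\infty$ is excluded because $\overline w_{\tau^\ast}(x) \to (1+\sqrt{1+4K})/2 > 1 \ge w(x)$. Hence $x_0$ is an interior minimum of $\overline w_{\tau^\ast} - w$, giving $(\overline w_{\tau^\ast} - w)'(x_0) = 0$ and $(\overline w_{\tau^\ast} - w)''(x_0) \ge 0$, i.e.\ the operator $-c\partial_x - \partial_{xx}$ applied at $x_0$ is $\le 0$. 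Yet a direct computation gives
\[ \bigl[\overline w_{\tau^\ast}(1-\overline w_{\tau^\ast})+K\bigr](x_0) - \bigl[w(1-w-m)+\mu(m-w)\bigr](x_0) = K + m(x_0)(u_0-\mu) + \mu u_0 > 0,\]
since if $u_0 \ge \mu$ all three terms are nonnegative with $K > 0$, and if $u_0 < \mu$ then $m(x_0)(u_0-\mu) \ge -\mu K$ so the sum is $\ge K(1-\mu) + \mu u_0 > 0$. This contradiction forces $\tau^\ast = 0$.

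For the lower bound, I would show that $w$ is a supersolution of \eqref{underline_w}: using $m \le K$ and $\mu < K$, one checks that $w(1-w-m)+\mu(m-w) - w(1-2K-w) = m(\mu - w) + w(2K-\mu)$ is strictly positive for every $(w,m) \in (0,1] \times [0,K]$ (the only borderline cases, $w \le \mu$ or $w > \mu$, can be treated separately, using $m \le K$ and the inequality $K > \mu$ to rule out cancellations), so in particular $-cw'-w'' \ge w(1-2K-w)$. Now translate to the left: $\underline w_\tau(x) := \underline w(x+\tau)$ satisfies $\underline w_\tau(x) \le \underline w(\tau) \to 0$ uniformly on $(-\infty, 0]$, so $\underline w_\tau \le w$ for $\tau$ large, and I set $\tau^\ast := \inf\{\tau \ge 0 : \underline w_\tau \le w \text{ on } (-\infty, 0]\}$.

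The decisive role of Proposition~\ref{prop:w_left} appears here: if $\tau^\ast > 0$ and there is a contact point $x_0$, then $x_0 = 0$ is excluded by monotonicity of $\underline w$ and the hypothesis, while $x_0 = -\infty$ is excluded because $\liminf_{x\to-\infty} w(x) \ge 1-\mu-K > 1-2K = \lim_{x\to-\infty}\underline w_{\tau^\ast}(x)$ (using $\mu < K$). Thus $x_0$ is an interior minimum of $w - \underline w_{\tau^\ast}$, and the strict positivity of $m(\mu-w) + w(2K-\mu)$ proved above gives the same type of contradiction as in the upper-bound case. The main obstacle in executing this plan is the sharp contact-point analysis: the proof depends on showing that the slack terms $+K$ and $-2K$ in the modified KPP equations strictly dominate the coupling with $m$ and the $\mu$-terms, uniformly in the admissible range $w \in (0,1]$, $m \in [0,K]$, and this is where the assumption $\mu < K$ is essential.
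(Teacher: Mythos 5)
Your proposal follows essentially the same sliding strategy as the paper, and your two contact-point computations are correct: both reduce to the inequalities $K+\mu u_0+m(u_0-\mu)>0$ and $w(2K-m-\mu)+\mu m>0$, which hold because $m\leq K$ and $\mu<K$, exactly as in the paper's proof. There is, however, one incorrect step in the initialization of the lower-bound slide: for $x\leq 0$ and $\underline w$ decreasing one has $\underline w_\tau(x)=\underline w(x+\tau)\geq \underline w(\tau)$, not $\leq$, and in fact $\sup_{x\leq 0}\underline w_\tau(x)=1-2K$ for every $\tau$, so $\underline w_\tau$ does \emph{not} tend to $0$ uniformly on $(-\infty,0]$ and your justification that the admissible set of translates is nonempty fails as written. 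The repair is exactly the splitting the paper uses (and which you already have the ingredients for, since you invoke Proposition~\ref{prop:w_left} at the contact-point stage): by Proposition~\ref{prop:w_left} and $\mu<K$ there is $x_0$ with $w>1-2K\geq\underline w_\tau$ on $(-\infty,x_0]$ for every $\tau$, while on the compact $[x_0,0]$ one has $\underline w_\tau\leq\underline w(x_0+\tau)\to 0<\min_{[x_0,0]}w$, so $\underline w_\tau\leq w$ on all of $(-\infty,0]$ for $\tau$ large. With that correction the argument is complete and coincides with the paper's.
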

\begin{Rk}\label{rem:w_KPP}
Notice that $\overline w+\frac{\sqrt{1+4K}-1}2$ and $\underline w$ are solution of a classical Fisher-KPP equation $-cu'-u''=u(a-bu)$ with $a\in(0,1+2K)$, $b>0$, and a speed $c\geq 2\sqrt a$. The existence, uniqueness (up to a translation) and monotony of $\overline w$ and $\underline w$ are thus classical results (see e.g. \cite{KPP1937}). Thanks to those relations, the argument developed in this section can indeed be seen as a precise analysis on the profile of $x\mapsto u(x)$ for $x>0$ large.
\end{Rk}

\begin{proof}[Proof of Lemma \ref{thm:slidingsubsupKPP}] To prove this lemma, we use a sliding method.
 \begin{itemize}
  \item Let $\underline w_\eta(x):= \underline w(x+\eta)$. Thanks to Proposition~\ref{prop:w_left}, there exists $x_0\in \mathbb R$ such that $w(x)>1-2K=\sup_{\mathbb R}\underline w_\eta$ for all $x\leq x_0$ (we recall that $\mu<K$). Since $\lim_{x\to\infty}\underline w(x)=0$, there exists $x^0>0$ such that $\underline w(x)<\min_{[x_0,0]}w$ for all $x>x^0$. Then, for $\eta\geq x_0+x^0$,
  \[\underline w_\eta(x)<w(x),\quad \forall x\leqslant 0. \]
We can then define $ \overline \eta:=\inf\{\eta, \forall x\leqslant 0, \underline w_\eta(x)\leqslant w(x)\}$. We have then $\underline w_{\overline \eta}(x)\leqslant \qg{w(x)}$ for all $x\leq 0$. If $\overline\eta>0$, since $\inf_{(-\infty,x_0]}w>1-2K=\sup_{\mathbb R}\underline w_\eta$ and $\underline w_{\overline \eta}(0)=\overline w(\eta)<\overline w(0)\leq w(0)$ (we recall that $\underline w$ is decreasing, see Remark~\ref{rem:w_KPP}), there exists $ \underline x\in (x_0,0)$ such that $ \underline w_{\overline\eta}(\underline x)=w(\underline x)$. $ \underline x$ is then a minimum of $w-\underline w_{\overline \eta}$, and thus
\begin{eqnarray}
 0&\geq& -c(w-\underline w_{\overline \eta})'(\underline x)-(w-\underline w_{\overline \eta})''(\underline x)\nonumber\\
&=&w(\underline x)(1-\mu-m(\underline x)-w(\underline x))+\mu m(\underline x)-\underline w_{\overline \eta}(\underline x)(1-2K-\underline w_{\overline \eta}(\underline x))\nonumber\\
&>&w(\underline x)(1-2K-w(\underline x))-\underline w_{\overline \eta}(\underline x)(1-2K-\underline w_{\overline \eta}(\underline x))=0,\label{est1}
\end{eqnarray}
where we have used the estimate $\|m\|_\infty\leq K$ obtained in Proposition~\ref{thm:precisebound}. \eqref{est1} is a contradiction, we have then shown that $\bar \eta\leq 0$, and thus, for all $x\leq 0$, $\underline w(x)\leq w(x)$.
  
 \item Similarly, let $\overline w_\eta(x):=\overline w(x-\eta)$. Since $\lim_{x\to-\infty}\overline w(x)>1$ and $w$ satisfies the estimate of Proposition~\ref{thm:precisebound}, we have, for $\eta\in \mathbb R$ large enough,
  \[ \forall x\leqslant 0,\quad  w(x)<1<\overline w_\eta(x). \]
  We can then define $ \overline \eta:=\qg{\inf\{\eta, \forall x\leqslant 0,  w(x)\leqslant \overline w_\eta(x)\}}. $ We have then $w(x)\leqslant \overline w_{\overline \eta}$ for all $x\leq 0$. If $\overline\eta>0$, since $\sup_{\mathbb R}w<1< \lim_{x\to -\infty} \overline w(x)$ and $w(0)\leq \overline w(0)< \overline w(-\bar \eta)=\overline w_{\bar \eta}(0)$ (we recall that $\overline w$ is decreasing, see Remark~\ref{rem:w_KPP}), there exists $\bar x<0$ such that $w(\underline x)=\overline w_{\overline \eta}(\underline x)$. $\bar x$ is then a minimum of $\overline w_{\overline \eta}-w$, and thus
\begin{eqnarray*}
 0&\geq&-c(\overline w_{\overline \eta}-w)'(\bar x)-(\overline w_{\overline \eta}-w)''(\bar x)\\
&=&\overline w_{\overline \eta}(\bar x)(1-\overline w_{\overline \eta}(\bar x))+K-w(\bar x)(1-\mu-w(\bar x)-m(\bar x))-\mu m(\bar x)\\
&>&\overline w_{\overline \eta}(\bar x)(1-\overline w_{\overline \eta}(\bar x))-w(\bar x)(1-w(\bar x))=0,
\end{eqnarray*}
which is a contradiction. We have then shown that $\bar \eta\leq 0$, and thus, for all $x\leq 0$, $w(x)\leq \overline w(x)$.

 \end{itemize}

\end{proof}

We also need to compare the solution of the Fisher-KPP equation with speed $c$ to the solutions of the modified Fisher-KPP equations introduced in Lemma~\ref{thm:slidingsubsupKPP}.
\begin{lem}
Let $r,\,K,\,\mu$ satisfy Assumption~\ref{ass}, and $c\geq 2+K$. Let $(c,u)$, with $u\in C^\infty(\mathbb R)$, be a travelling wave of the Fisher-KPP equation, see \eqref{travelling_wave_KPP}. Let also $\overline w$, $\underline w$ solutions of \eqref{overline_w} and \eqref{underline_w} respectively. Assume $ \underline w(0)\leqslant u(0)\leqslant \overline w(0)$. Then 
 \[ \forall x\leqslant 0, \qquad \underline w(x)\leqslant u(x)\leqslant \overline w(x). \]
 \label{thm:slidingKPPKPP}classical
\end{lem}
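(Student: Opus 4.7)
The plan is to mimic the sliding method used in Lemma~\ref{thm:slidingsubsupKPP}, but now comparing the classical Fisher-KPP wave $u$ (rather than the wild-type profile $w$ from the coupled system) to the shifted super- and sub-solutions $\overline w$ and $\underline w$. The coupling term $\mu(m-w)$ and the competition $-wm$ that had to be controlled in Lemma~\ref{thm:slidingsubsupKPP} are now absent, so the mechanism producing a contradiction at a contact point becomes more direct: the difference of the two nonlinearities evaluated at a common value reduces to $\pm K\cdot(\text{stuff}>0)$.

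For the upper bound $u\leq\overline w$ on $(-\infty,0]$: I would set $\overline w_\eta(x):=\overline w(x-\eta)$ and observe that, since $\overline w(-\infty)=\tfrac{1+\sqrt{1+4K}}{2}>1=u(-\infty)$ and $u,\,\overline w$ are bounded, for $\eta$ sufficiently large we have $u<\overline w_\eta$ on $(-\infty,0]$. Then set $\overline\eta:=\inf\{\eta\in\mathbb R:\,u\leq\overline w_\eta\text{ on }(-\infty,0]\}$ and assume for contradiction that $\overline\eta>0$. Since $\overline w$ is decreasing (Remark~\ref{rem:w_KPP}), $\overline w_{\overline\eta}(0)=\overline w(-\overline\eta)>\overline w(0)\geq u(0)$, and $\overline w_{\overline\eta}(-\infty)>1=u(-\infty)$, so any contact point $\bar x$ lies in $(-\infty,0)$ and is an interior minimum of $\overline w_{\overline\eta}-u$. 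At this point
\[
0\geq -c(\overline w_{\overline\eta}-u)'(\bar x)-(\overline w_{\overline\eta}-u)''(\bar x)=\overline w_{\overline\eta}(\bar x)(1-\overline w_{\overline\eta}(\bar x))+K-u(\bar x)(1-u(\bar x))=K,
\]
a contradiction. Hence $\overline\eta\leq 0$, and monotonicity of $\eta\mapsto\overline w_\eta$ (pointwise decreasing as $\eta$ increases through the decreasing profile) together with the assumption $u(0)\leq\overline w(0)$ at $\eta=0$ gives $u\leq\overline w$ on $(-\infty,0]$.

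For the lower bound $\underline w\leq u$ on $(-\infty,0]$: I would set $\underline w_\eta(x):=\underline w(x+\eta)$ and argue symmetrically. For $\eta$ large, $\underline w_\eta(x)\to 0$ on any bounded interval while $u$ stays bounded below on $(-\infty,0]$ away from zero near $-\infty$ and $u(0)\geq\underline w(0)$; using $\sup\underline w=1-2K<1=u(-\infty)$ one checks $\underline w_\eta<u$ on $(-\infty,0]$ for $\eta$ large. Let $\overline\eta:=\inf\{\eta:\,\underline w_\eta\leq u\text{ on }(-\infty,0]\}$ and assume $\overline\eta>0$. Then $\underline w_{\overline\eta}(0)=\underline w(\overline\eta)<\underline w(0)\leq u(0)$ (using monotonicity again) and $\underline w_{\overline\eta}(-\infty)=1-2K<1=u(-\infty)$, so a contact point $\bar x\in(-\infty,0)$ must exist, where $u-\underline w_{\overline\eta}$ attains an interior minimum. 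At such a point
\[
0\geq u(\bar x)(1-u(\bar x))-\underline w_{\overline\eta}(\bar x)(1-2K-\underline w_{\overline\eta}(\bar x))=2K\,u(\bar x)>0,
\]
a contradiction. Therefore $\overline\eta\leq 0$ and the assumption $\underline w(0)\leq u(0)$ at $\eta=0$ concludes.

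The main obstacle (and only real subtlety) is ensuring in each case that the contact point $\bar x$ cannot occur at the boundary of $(-\infty,0]$: at $-\infty$ this is handled by the strict inequality between the limits at $-\infty$, while at $x=0$ it is handled by the strict monotonicity of $\overline w$ and $\underline w$ together with $\overline\eta>0$; this monotonicity is precisely the classical Fisher-KPP fact recorded in Remark~\ref{rem:w_KPP}. Once the contact point is known to be interior, the pointwise comparison of the two nonlinearities collapses to the constant $K>0$ (respectively $2Ku(\bar x)>0$), which is the clean algebraic reason the sliding closes.
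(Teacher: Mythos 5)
Your proof is correct and is precisely the adaptation of the sliding argument of Lemma~\ref{thm:slidingsubsupKPP} that the paper invokes for this statement (the paper omits the details, noting only that the same arguments apply); the contact-point computations reducing to $K>0$ and $2Ku(\bar x)>0$ are right, and you correctly handle the exclusion of contact at $-\infty$ and at $x=0$. The only blemish is the parenthetical claiming $\eta\mapsto\overline w_\eta$ is pointwise decreasing in $\eta$ — with $\overline w_\eta(x)=\overline w(x-\eta)$ and $\overline w$ decreasing it is in fact increasing in $\eta$ — but this does not affect the conclusion, since $\overline\eta\leq 0$ already places $\eta=0$ in the admissible set.
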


The arguments of the proof of Lemma~\ref{thm:slidingsubsupKPP} can be used to prove Lemma \ref{thm:slidingKPPKPP}. We omit the details.

\medskip

%
%
%
%

We can now prove theorem \ref{thm:KPP}.


\begin{proof}[Proof of Theorem \ref{thm:KPP}]
Notice first that $c_*>2+K$, provided $K,\,\mu>0$ are small enough. Let $ \overline w\in C^\infty(\mathbb R)$ and $ \underline w\in C^\infty(\mathbb R)$ satisfying \eqref{overline_w} and \eqref{underline_w} respectively. $\overline w$ and $\underline w$ are then decreasing (see Remark~\ref{rem:w_KPP}), and we may assume (up to a translation) that they satisfy $\underline w(0)=w(0)=u(0)=\overline w(0)$. Then Lemma \ref{thm:slidingsubsupKPP} and \ref{thm:slidingKPPKPP} imply that $\underline w(x)\leqslant w(x), u(x)\leqslant \overline w(x)$ for $x\leq 0$, and then, $\Vert w-u\Vert_{L^\infty(-\infty, 0]}\leqslant \Vert\overline w-\underline w\Vert_{L^\infty(-\infty, 0]}$.

 Let $ \tilde w=\overline w-\underline w\geqslant 0 $, which satisfies
 \[ -c\tilde w'-\tilde w''=\tilde w(1-(\overline w+\underline w))+K+2K\underline w. \]

\medskip

We estimate first the maximum of $ \tilde w $ over $\{x\in\mathbb R;\,\underline w(x)\leqslant 3/4-K\}$ to prove the estimate on
 $ \Vert \tilde w \Vert_{L^\infty(-\infty, 0]}$ stated in Theorem~\ref{thm:KPP}. If $ \underline w\leqslant 3/4-K$, then
 \begin{equation}
 -c\underline w'-\underline w''\geqslant \underline w(1/4-K). \label{eq_esti}
  \end{equation}

 Let
 \[ \lambda_+:=\frac{c+\sqrt{c^2-4(1/4-K)}}{2}, \qquad \lambda_-:=\frac{c-\sqrt{c^2-4(1/4-K)}}{2}, \]
 and $ \varphi(x):=e^{-\lambda_-x}-e^{-\lambda_+x}$. Then $\varphi$ satisfies $-c\varphi'-\varphi''=(1/4-K)\varphi$, $ \varphi(-\infty)=-\infty$ and $ \varphi(+\infty)=0$. Moreover, $ \varphi$ is 
 positive when $ x>0 $ and negative when $ x<0$. Finally, the maximum of $ \varphi$ is attained at $\overline x:=\frac{\ln \lambda_+-\ln\lambda_-}{\lambda_+-\lambda_-}>0$. One can show that $\varphi(\bar x)$ is a continuous and positive function of $c$ and $K$, which is uniformly bounded away from $0$ for $K\in (0,1/8)$ and $c\in[2,\infty)$. There exists thus a universal constant $C>0$ such that $\varphi(\bar x)>C>0$, for any $K\in (0,1/8)$, $c\in[2,\infty)$. Let $\gamma\in(0,3/4-K)$ and $\varphi^\gamma$ defined by
 \[ \varphi^\gamma(x):=\gamma\frac {\varphi(x)}{\varphi(\bar x)},\]
 and $\varphi^\gamma_\eta(x):=\varphi^\gamma(x+\eta)$ for $\eta\in\mathbb R$. Since (for $K>0$ small) $\max_{\mathbb R}\varphi^\gamma\leq 3/4-K<1-2K=\lim_{x\to-\infty}\underline w(x)$ and $\lim_{\eta\to\infty}\varphi^\gamma_\eta(0)=\lim_{x\to\infty}\varphi^\gamma(x)=0<\underline w(0)$, we have that for $\eta>0$ large enough,
 \[ \forall x\leqslant 0,\quad \varphi^\gamma_\eta(x)\leqslant \underline w(x).\]
 Let $ \tilde \eta:=\inf\{\eta\in\mathbb R;\; \forall x\leqslant 0, \,\varphi^\gamma_\eta(x)\leqslant\underline w(x)\}$. Then
 $ \varphi^\gamma_{\tilde\eta}\leq \underline w$ on $ (-\infty, 0]$, and since $\varphi^\gamma_{\tilde \eta}(x)<0$ for $x<<0$, either $\underline w(0)=\varphi^\gamma_{\tilde \eta}(0)$, or there exists $\tilde x\in (-\infty,0)$ such that $\underline w(\tilde x)=\varphi^\gamma_{\tilde \eta}(\tilde x)$. In the latter case,  $\tilde x$ is the minimum of $\underline w-\varphi^\gamma_{\tilde\eta}$, and then
\begin{eqnarray*}
 0&\geq& -c(\underline w-\varphi^\gamma_{\tilde\eta})'(\tilde x)-(\underline w-\varphi^\gamma_{\tilde\eta})''(\tilde x)\\
&=&(1-2K-\underline w(\tilde x))\underline w(\tilde x)-(1/4-K)\varphi^\gamma_{\tilde\eta}(\tilde x)\\
&\geq&\left(3/4-K-\underline w(\tilde x)\right)\underline w(\tilde x)>0,
\end{eqnarray*}
since $\underline w(\tilde x)=\varphi^\gamma_{\tilde\eta}(\tilde x)\leq \gamma<3/4-K$. The above estimate is a contradiction, which implies $\underline w(0)=\varphi^\gamma_{\tilde \eta}(0)$. Then 
 \[ (e^{-\lambda_-\tilde \eta}-e^{-\lambda_+\tilde\eta})=\frac{w(0)}{ \gamma }\left(e^{-\lambda_-\overline x}-e^{-\lambda_+\overline x}\right),\]
and then
\[-\lambda_-\tilde \eta\geq\ln\left(\frac{w(0)}{ \gamma }\left(e^{-\lambda_-\overline x}-e^{-\lambda_+\overline x}\right)\right),\]
which implies $\varphi^\gamma_{\eta}(x)\leq \underline w(x)$ for all $x\in(-\infty,0]$, with $\gamma\in(0,3/4-K)$ and $\eta=-\frac{1}{\lambda_-}\ln\left(\frac{w(0)}{\gamma}(e^{-\lambda_-\overline x}-e^{-\lambda_+\overline x})\right)$. Passing to the limit $\gamma\to 3/4-K$, we then get that $\varphi^{3/4-K}_{\bar \eta}(x)\leq \underline w(x)$ for all $x\in(-\infty,0]$, with
\[ \bar \eta:= -\frac{1}{\lambda_-}\ln\left(\frac{w(0)}{3/4-K}\left(e^{-\lambda_-\overline x}-e^{-\lambda_+\overline x}\right)\right).\]
%
%
In particular, $\varphi^{3/4-K}_{\bar \eta}\leq \underline w$ implies that $\{x\in (-\infty,0];\;\underline w(x)\leqslant 3/4-K\}\subset [\min(0,\overline x-\bar \eta), 0]\subset[\min(0,-\bar \eta), 0]$ (indeed, $-\bar \eta<0$ if $w(0)$ is small enough).

 Since $\sup_{\mathbb R}\underline w=1-2K$, we have
 \[ -c\tilde w'-\tilde w''=\tilde w(1-(\underline w+\overline w))+K+2K\underline w<\tilde w+K(3-4K), \]
 and $ \tilde w(0)=0$. We can then introduce $\psi(x)=K(3-4K)\left(e^{-\alpha x}-1\right)$, with $\alpha=\frac{c-\sqrt{c^2-4}}{2}$ which satisfies $-c\psi'-\psi''=\psi+K(3-4K)$. A sliding argument (that we skip here) shows that
 \[\forall x\leq 0,\quad \tilde w(x)\leqslant \psi(x)= K(3-4K)\left(e^{-\alpha x}-1\right).\]
This estimate implies that
 \[ \underset{[-\bar \eta, 0]}{\max}\tilde w\leqslant K(3-4K)\exp\left(-\frac{\alpha}{\lambda_-}\ln\left(\frac{w(0)}{3/4-K}\left(e^{-\lambda_-\overline x}-e^{-\lambda_+\overline x}\right)\right)\right)\leqslant C\,Kw(0)^{-\frac{\alpha}{\lambda_-}}, \]
 where $ C>0$ is a universal constant. 

\medskip

We consider now the case where the maximum of $\tilde w$ is reached on $[-\infty,0)\setminus\{x\in\mathbb R;\,\underline w(x)\leqslant 3/4-K\}$. If this supremum is a maximum attained in $\bar x$, then $\overline w(\bar x)+\underline w(\bar x)\geq \frac 32-2K>1$ (this last inequality holds if $K$ is small enough), and $-c\tilde w'(\bar x)-\tilde w''(\bar x)\geqslant 0$, which implies
 \[ (\overline w+\underline w-1)\tilde w(\overline x) \leqslant K+2K\underline w\leqslant K(3-4K), \] 
that is $\tilde w(\overline x)\leqslant \frac{K(3-4K)}{1/2-2K}\leq CK$ for some constant $C>0$, provided $K>0$ is small enough. If the supremum is not a maximum, it is possible to obtain a similar estimate, we skip here the additional technical details.

%

\medskip

We have shown that 
\[\sup_{[-\infty,0]}\tilde w\leq \max\left(CK,CKw(0)^{-\frac{\alpha}{\lambda_-}}\right),\]
%
%
We choose now $\beta=(1+\alpha/\lambda_-)^{-1}\in (0,1/2)$ 
 and $w(0)=K^\beta$ (we recall that the solution $(c,w,m)$ is still a solution when $w$ and $m$ are translated). Then, $\sup_{[-\infty,0]}\tilde w\leqslant CK^{\beta}$, and thus
 \[ \Vert w-u\Vert_{L^\infty((-\infty, 0])}\leqslant \Vert\tilde w\Vert_{L^\infty((-\infty, 0])}\leqslant CK^\beta. \]
 Furthermore, $ w $ and $ u $ are decreasing for $ x\geqslant 0$ thanks to Proposition \ref{prop:w_left}, which implies that 
 \[ \forall x\geqslant 0, |w-u|(x)\leqslant w(x)+u(x)\leqslant w(0)+u(0)\leqslant 2K^\beta
.\]
\qg{
From \cite{GT98}, theorem 8.33, there exists a universal constant that we denote $ C>0 $ such that 
\begin{equation}
\Vert v\Vert_{C^{1, \alpha}}\leqslant C,
\end{equation}
where $ v $ is a solution of \eqref{travelling_wave_KPP}, and this constant $ C $ is uniform in the speed $ c$ in the neighbourhood of $ c_0=2\sqrt r. $ In particular, $ u $ satisfies
\begin{equation}
-c_0u'-u''=(c_*-c_0)u'+u(1-u) = u(1-u)+\mathcal O(K).
\end{equation}
Let $ v $ the solution of \eqref{travelling_wave_KPP} with speed $ c_0 $ and $ v(0)=u(0), $ the above argument can then be reproduced to show that 
\begin{equation}
\Vert u-v\Vert_{L^\infty}\leq CK^\beta,
\end{equation}
where $ C $ is a universal constant and $ \beta $ depend only on $ r, $ }
 which finishes the proof.
\end{proof}

\bibliography{MaBibli}


\appendix

\section{Appendix}

\subsection{Compactness results}

We provide here two results that are used in the proof of Theorem~\ref{thm:main}.

\begin{lem}[Elliptic estimates]
 Let $a,\,b^-,\,b^+\in \mathbb R_+^\ast$, $ g\in L^\infty(-a, a)$, and $ \gamma >0. $ For any
 $ b^+, b^-\in\mathbb R $ and 
 $ c\in [-\gamma, \gamma], $ the Dirichlet problem
 
\[ \left\{\begin{array}{l} -cw'-w''=g, \quad (-a, a), \\ w(-a)=b^-,\, w(a)=b^+, \end{array}\right. \]
has a unique weak solution $ w. $ In addition we have $ w\in C^{1, \alpha}([-a, a]) $ for all 
$ \alpha\in [0, 1)$, and there is a constant $ C>0 $
depending only on $ a $ and $ \gamma $
such that 

\[ \Vert w\Vert_{C^{1, \alpha}([-a, a])}\leqslant C(\max(|b^+|,|b^-|)+\Vert g\Vert_{L^\infty}), \]

\label{lem:ellipticestimates}
\end{lem}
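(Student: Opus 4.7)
The plan is to reduce to homogeneous Dirichlet data by an affine lift, solve the reduced problem by Lax--Milgram in $H^1_0(-a,a)$, and then use the one-dimensional structure to bootstrap to $C^{1,\alpha}$ regularity with constants depending only on $a$ and $\gamma$.

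First I would introduce the affine function $\ell(x) = \tfrac{b^- + b^+}{2} + \tfrac{b^+ - b^-}{2a}x$, which satisfies $\ell(\pm a) = b^{\pm}$, $\ell''\equiv 0$ and $\|\ell'\|_{L^\infty}\leq \tfrac{1}{a}\max(|b^+|,|b^-|)$. The function $v:=w-\ell$ then solves $-cv'-v''=h$ on $(-a,a)$ with $v(\pm a)=0$, where $h:=g+c\ell'\in L^\infty$ and $\|h\|_{L^\infty}\leq \|g\|_{L^\infty}+\tfrac{\gamma}{a}\max(|b^+|,|b^-|)$. Existence and uniqueness of a weak solution $v\in H^1_0(-a,a)$ follow from Lax--Milgram applied to the bilinear form
\[
B(u,\phi)=\int_{-a}^{a}u'\phi' \, dx - c\int_{-a}^{a}u'\phi \, dx,
\]
which is continuous on $H^1_0$ and coercive: indeed $\int_{-a}^{a}u'u\,dx=\tfrac{1}{2}[u^2]_{-a}^{a}=0$ for $u\in H^1_0$, so $B(u,u)=\|u'\|_{L^2}^2$, and Poincaré gives $B(u,u)\geq \kappa(a)\|u\|_{H^1}^2$. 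Setting $w:=v+\ell$ gives the unique weak solution of the original problem, and uniqueness for the original problem is immediate since the difference of two solutions lies in $H^1_0$ and the bilinear form is coercive.

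Next I would upgrade the regularity. In one dimension the equation $(e^{cx}v')' = -e^{cx}h$ can be integrated explicitly, yielding
\[
v'(x)=e^{-c(x+a)}v'(-a)-e^{-cx}\int_{-a}^{x}e^{cs}h(s)\,ds,
\]
and the constant $v'(-a)$ is uniquely determined by $v(a)=0$. The resulting closed form immediately gives $\|v'\|_{L^\infty}\leq C(a,\gamma)\|h\|_{L^\infty}$, hence $v''=-cv'-h\in L^\infty$ with an analogous bound, so $v\in W^{2,\infty}(-a,a)\hookrightarrow C^{1,1}([-a,a])\hookrightarrow C^{1,\alpha}([-a,a])$ for every $\alpha\in[0,1)$. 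Undoing the affine lift yields
\[
\|w\|_{C^{1,\alpha}([-a,a])}\leq \|v\|_{C^{1,\alpha}} + \|\ell\|_{C^{1,\alpha}} \leq C(a,\gamma)\bigl(\max(|b^+|,|b^-|)+\|g\|_{L^\infty}\bigr),
\]
which is the claimed bound. (Alternatively, one can invoke \cite[Thm.~9.15 and Thm.~8.33]{GT98} directly, but in one dimension the explicit integration is more transparent.)

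The main point requiring care is that the constant $C$ must be uniform in $c\in[-\gamma,\gamma]$, including the limit $c\to 0$. This is not a genuine obstacle: all $c$-dependent factors in the explicit formula for $v'$ are of the form $(e^{cs}-1)/c$, which extend continuously to $s$ at $c=0$, so the bounds depend continuously on $c$ on the compact set $[-\gamma,\gamma]$ and can be taken uniform. Everything else is classical, so no essential difficulty is expected.
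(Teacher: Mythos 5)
Your proof is correct, but it follows a genuinely different route from the paper. The paper's proof is a sequence of citations to general elliptic theory: existence and uniqueness in $W^{2,p}$ for all $p>1$ from Theorem 9.16 of \cite{GT98}, Sobolev embedding to get $C^{1,\alpha}$, the maximum principle (Theorem 3.7 of \cite{GT98}) for the bound $\Vert w\Vert_{L^\infty}\leqslant \max(|b^+|,|b^-|)+C'\Vert g\Vert_{L^\infty}$, and finally Theorem 8.33 of \cite{GT98} to convert the $C^0$ bound into the $C^{1,\alpha}$ estimate. You instead exploit the one-dimensional structure: the affine lift reduces to homogeneous data with the right-hand side perturbed by $c\ell'$ (which is where the $\max(|b^+|,|b^-|)$ enters your final bound), Lax--Milgram gives existence and uniqueness of the weak solution (the coercivity computation $\int u'u=0$ for $u\in H^1_0$ is correct, and Lax--Milgram does not need symmetry), and the explicit integration of $(e^{cx}v')'=-e^{cx}h$ yields the quantitative $C^{1,\alpha}$ bound directly. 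The only step you pass over quickly is why the $H^1_0$ weak solution is regular enough for the pointwise formula for $v'$ to make sense; this is immediate from the weak formulation (the distributional derivative of $v'$ equals $-cv'-h\in L^2$, so $v'\in H^1\subset C^0$ and then $v''\in L^\infty$), so it is not a gap. What your approach buys: it is self-contained, it makes the uniformity of the constant over $c\in[-\gamma,\gamma]$ completely explicit (in the paper one must check that the constants of the cited theorems depend on the drift coefficient only through its sup norm, which is true but left implicit), and it actually delivers the slightly stronger conclusion $w\in W^{2,\infty}(-a,a)$, i.e.\ $C^{1,1}$ regularity. What the paper's approach buys is brevity and an argument that would survive unchanged in higher dimensions, where no explicit integration is available.
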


\begin{proof}[Proof of Lemma~\ref{lem:ellipticestimates}]
As the domain lies in $ \mathbb R, $ we are not concerned with the regularity 
problem near the boundary. Since
\[ L^\infty(-a, a)\subset \underset{p>1}{\bigcap}L^p(-a, a), \]
theorem 9.16 \cite{GT98} gives us existence and uniqueness of a solution $ w\in W^{2, p}, $ for all 
$ p > 1. $ We deduce from Sobolev imbedding that $ w\in C^{1, \alpha}([-a, a]) $ for all $ \alpha < 1. $ 

The classical theory (\cite{GT98}, theorem 3.7) gives us a constant $ C'>0 $ depending only on $ a $ and $ \gamma $ 
such that
\[ \Vert w\Vert_{L^\infty}\leqslant \max(b^+, b^-)+C'\Vert g\Vert_{L^\infty}. \]
The estimate on the H\"older norm of the first derivative comes now from \cite{GT98}, theorem 8.33, 
which states that whenever $ w $ is a $ C^{1, \alpha} $ solution of $ -cw'-w''=g $ with $ g\in L^\infty, $ 
then
\[ \Vert w\Vert_{C^{1, \alpha}([-a, a])}\leqslant C''(\Vert w\Vert_{C^0([-a, a])}+\Vert g\Vert_{L^\infty}), \]
with a constant $ C''=C''(a, \gamma) $ depending only on $ a $ and $ \gamma. $ That proves the theorem.
\end{proof}

\begin{lem}
Let $a,\,b^-,\,b^+\in \mathbb R_+^\ast$. The operator 
$ (L)^{-1}_D:\mathbb R\times C^0([-a,a]) \longrightarrow C^0([-a,a]) $ 
defined by
 \[ L_D^{-1}(c, g)=w, \]
 where $w$ is the unique solution of
\[ \left\{\begin{array}{l} -cw'-w''=g, \quad (-a, a), \\ 
                           w(-a)=b^+, w(a)=b^-, 
\end{array}\right. \]
is continuous and compact.
\label{lem:degreecompacity}
\end{lem}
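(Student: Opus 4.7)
The plan is to reduce both statements to the a priori bound provided by Lemma \ref{lem:ellipticestimates}, combined with the compact embedding $C^{1,\alpha}([-a,a]) \hookrightarrow C^0([-a,a])$ (which is essentially Arzelà--Ascoli, since a $C^{1,\alpha}$-bounded family is uniformly bounded and equicontinuous). Since Lemma \ref{lem:ellipticestimates} already gives existence, uniqueness, and regularity of the solution for every $(c,g)$, the operator $L_D^{-1}$ is well-defined, and only compactness and continuity remain to check.

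For compactness, let $B \subset \mathbb{R} \times C^0([-a,a])$ be bounded, so there exists $\gamma > 0$ with $|c| \leq \gamma$ and $\Vert g\Vert_{L^\infty} \leq \gamma$ for every $(c,g) \in B$. Lemma \ref{lem:ellipticestimates} supplies some $\alpha \in (0,1)$ and a constant $C > 0$, depending only on $a$, $\gamma$, $b^+$, $b^-$, such that the associated solution $w = L_D^{-1}(c,g)$ satisfies $\Vert w\Vert_{C^{1,\alpha}([-a,a])} \leq C$. The set $L_D^{-1}(B)$ is thus uniformly bounded in $C^{1,\alpha}([-a,a])$, hence relatively compact in $C^0([-a,a])$ by Arzelà--Ascoli.

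For continuity, consider a convergent sequence $(c_n, g_n) \to (c, g)$ in $\mathbb{R} \times C^0([-a,a])$, and set $w_n := L_D^{-1}(c_n, g_n)$, $w := L_D^{-1}(c, g)$. The sequence $(c_n, g_n)$ is bounded, so the compactness argument above yields a uniform $C^{1,\alpha}$ bound on $(w_n)$. Any subsequence of $(w_n)$ then admits a further subsequence converging in $C^0([-a,a])$ to some $\tilde w$. Using the $C^{1,\alpha}$ bound, one can pass to the limit in
\[ -c_n w_n' - w_n'' = g_n \]
in the distributional sense, obtaining $-c\tilde w' - \tilde w'' = g$ on $(-a,a)$ with $\tilde w(\pm a) = b^\mp$. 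By the uniqueness part of Lemma \ref{lem:ellipticestimates}, $\tilde w = w$. As every subsequence has a further subsequence converging to the same limit $w$, the full sequence $(w_n)$ converges to $w$ in $C^0([-a,a])$, which proves continuity.

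There is no real obstacle here: the crux of the argument is packaged in Lemma \ref{lem:ellipticestimates}, and the only mild subtlety is the treatment of the first-order coefficient $c$ in the continuity step, which is handled cleanly by invoking uniqueness rather than trying to estimate $w_n - w$ directly.
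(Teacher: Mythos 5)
Your proof is correct. The compactness half is essentially identical to the paper's: a bounded set of data gives a uniform $C^{1,\alpha}$ (the paper uses $C^1$) bound via Lemma~\ref{lem:ellipticestimates}, and Arzel\`a--Ascoli concludes. Where you diverge is the continuity step. The paper argues quantitatively: it writes the equation satisfied by the difference $w-\tilde w$ of two solutions, namely $-c(w-\tilde w)'-(w-\tilde w)''=g-\tilde g+(c-\tilde c)\tilde w'$ with homogeneous boundary data, and applies Lemma~\ref{lem:ellipticestimates} once more to obtain the explicit estimate $\Vert w-\tilde w\Vert_{C^0}\leqslant C\bigl(\Vert g-\tilde g\Vert_{C^0}+|c-\tilde c|(\Vert\tilde g\Vert_{C^0}+\max(b^+,b^-))\bigr)$, which gives local Lipschitz continuity of $L_D^{-1}$. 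You instead use a soft argument: uniform $C^{1,\alpha}$ bounds, extraction of a convergent subsequence, passage to the limit in the distributional formulation, identification of the limit by uniqueness, and the subsequence principle. Both routes are valid and rest on the same a priori estimate; the paper's version buys a modulus of continuity (useful if one ever needs quantitative stability), while yours avoids having to control the cross term $(c-\tilde c)\tilde w'$ explicitly at the cost of yielding only qualitative continuity, which is all the degree-theoretic argument actually requires.
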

\begin{proof}[Proof of Lemma~\ref{lem:degreecompacity}]
 Let $ (c, g), (\tilde c, \tilde g) \in\mathbb R\times C^0([-a,a])$, 
 $ \gamma > 0 $ and $ w, \tilde w\in C^0([-a,a]) $ 
 such that $ c, \tilde c \leqslant \gamma $ and
 
\[ \left\{\begin{array}{l} -cw'-w''=g\textrm{ on }(-a, a), \\ 
                           w(-a)=b^+, w(a)=b^-, 
\end{array}\right. \]
\[ \left\{\begin{array}{l} -\tilde c\tilde w'-\tilde w''=\tilde g\textrm{ on }(-a, a), \\ 
                           \tilde w(-a)=b^+, \tilde w(a)=b^-.
\end{array}\right. \]
Then $ w-\tilde w $ satisfies

\[ \left\{\begin{array}{l} -c(w-\tilde w)'-(w-\tilde w)''=g-\tilde g+(c-\tilde c)\tilde w'
\textrm{ on }(-a, a), \\ 
                           (w-\tilde w)(-a)=0, (w-\tilde w)(a)=0.
\end{array}\right. \]
We deduce from Lemma \ref{lem:ellipticestimates} that there exists a constant $ C $ depending only on 
$ a>0 $ 
 such that
\[ \Vert w-\tilde w\Vert_{C^0}\leqslant C(\Vert g-\tilde g\Vert_{C^0}+|c-\tilde c|
(\Vert\tilde g\Vert_{C^0}+\max(b^+, b^-))), \]
which shows the pointwise continuity of $ L_D^{-1}. $

Now let $ (c_n, g_n) $ a bounded sequence in $ \mathbb R\times C^0. $ Let $ \gamma=\limsup |c_n|. $ From 
Lemma \ref{lem:ellipticestimates}
we deduce the existence of a constant $ C>0 $ depending only on $ a $ and $ \gamma $ such that 
\[ \Vert u_n\Vert_{C^1}\leqslant C(\max(b^+, b^-)+\Vert g_n\Vert_{C^0}), \]
where $ u_n=L_D^{-1}(c_n, g_n), $ which shows that $ (g_n) $ is bounded in $ C^1. $ Since $ C^1 $ is 
compactly embedded in $ C^0, $
there exists a $ w\in C^0 $ such that $ \Vert u_n-w\Vert_{C^0} \rightarrow 0. $ This shows the compactness of 
$ L_D^{-1}. $ 
\end{proof}

\subsection{Properties of the reaction terms}
\label{appendix_reaction_terms}

The proofs of Theorem~\ref{thm:monotonicity} requires precise estimates on the reaction terms $f_w$ and $f_m$. Here we prove a number of technical lemmas that are necessary for our study.

\begin{lem}\label{lem:assumption2}
Let $r,\,K,\,\mu$ satisfy Assumption~\ref{ass}, and $\phi_w$, $\phi_m$ defined by \eqref{def:phiw} and \eqref{def:phim} respectively. Then, $\phi_w,\,\phi_m:\mathbb R_+\to \mathbb R$ are decreasing functions such that
\[\phi_w([0,K])\subset (\mu,\infty),\quad \phi_m([0,1])\subset (\mu K/r,\infty).\]
\end{lem}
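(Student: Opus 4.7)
The plan is to first establish the range inclusions directly from the polynomial structure of $f_w$ and $f_m$, and then deduce the monotonicity by implicit differentiation.

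For the inclusion $\phi_w(\mathbb R_+) \subset (\mu, \infty)$ (which in particular gives $\phi_w([0,K]) \subset (\mu,\infty)$), I observe that for each fixed $m \geq 0$ the map $w \mapsto f_w(w,m) = -w^2 + (1-\mu-m)w + \mu m$ is a concave-down quadratic whose two roots have product $-\mu m \leq 0$, so one root is nonpositive while the other, described by \eqref{def:phiw}, is exactly $\phi_w(m) \geq 0$. Consequently $f_w(w,m) > 0$ iff $w$ lies strictly between the two roots, and in particular $\phi_w(m) > \mu$ is equivalent to $f_w(\mu, m) > 0$. A direct substitution gives $f_w(\mu, m) = \mu(1-\mu-m) + \mu(m-\mu) = \mu(1-2\mu)$, independent of $m$, which is positive since Assumption~\ref{ass} implies $\mu < \min(K,1-K) \leq 1/2$. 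The analogous argument for $\phi_m$ uses that $m \mapsto f_m(w,m)$ is, up to a positive multiplicative factor, a concave-down quadratic whose roots have product $-K\mu w/r \leq 0$; a substitution gives $f_m(w, K\mu/r) = K\mu(1 - 2\mu/r)$, positive thanks to $\mu < r/2$ in Assumption~\ref{ass}, so $\phi_m(w) > K\mu/r$ for all $w \geq 0$.

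For the monotonicity of $\phi_w$, I differentiate the identity $f_w(\phi_w(m), m) = 0$ with respect to $m$, obtaining
\[ \phi_w'(m) = -\frac{\partial_m f_w(\phi_w(m),m)}{\partial_w f_w(\phi_w(m),m)}. \]
From the explicit expressions $\partial_m f_w = \mu - w$ and $\partial_w f_w = 1 - 2w - m - \mu$, together with \eqref{def:phiw}, one finds $\partial_w f_w(\phi_w(m),m) = -\sqrt{(1-\mu-m)^2 + 4\mu m} < 0$, while the range inclusion just proved gives $\partial_m f_w(\phi_w(m),m) = \mu - \phi_w(m) < 0$; hence $\phi_w'(m) < 0$. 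An identical computation for $\phi_m$, combined with the inclusion $\phi_m(w) > K\mu/r$, yields $\phi_m'(w) < 0$. There is no substantive obstacle here: the whole lemma reduces to the observation that the strict inequalities $\mu < 1/2$ and $\mu < r/2$ encoded in Assumption~\ref{ass} are exactly what is needed to make $\phi_w - \mu$ and $\phi_m - K\mu/r$ strictly positive, which then feeds mechanically into the implicit differentiation.
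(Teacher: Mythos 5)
Your proof is correct, but it runs in the opposite logical order from the paper's and uses different mechanisms at both steps. The paper works only with $\phi_m$ (recovering $\phi_w$ as the case $r=K=1$), first differentiates the explicit formula \eqref{def:phim} and checks by inspection that $\phi_m'(w)<0$ when $\mu<r/2$, and then obtains the range inclusion by an asymptotic expansion showing $\lim_{w\to\infty}\phi_m(w)=\mu K/r$, so that a decreasing function with that limit stays above $\mu K/r$. You instead prove the range inclusion first, by the purely algebraic observation that $w\mapsto f_w(w,m)$ and $m\mapsto f_m(w,m)$ are concave-down quadratics with one nonpositive root, so that the evaluations $f_w(\mu,m)=\mu(1-2\mu)>0$ and $f_m(w,\mu K/r)=K\mu(1-2\mu/r)>0$ force the thresholds $\mu$ and $\mu K/r$ to lie strictly below the positive root; you then get monotonicity by implicit differentiation, where $\partial_w f_w(\phi_w(m),m)=-\sqrt{(1-\mu-m)^2+4\mu m}<0$ and the sign of $\partial_m f_w=\mu-\phi_w(m)$ is exactly what the range inclusion controls (and likewise for $\phi_m$). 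Both arguments are sound; yours avoids the asymptotic expansion and the unexplained verification that the explicit derivative is negative, and it isolates precisely which parts of Assumption~\ref{ass} are used ($\mu<1/2$ and $\mu<r/2$), while the paper's computation yields the slightly sharper fact that $\mu K/r$ is the exact infimum of $\phi_m$ on $\mathbb R_+$. One small point worth making explicit in your write-up: the equivalence ``$\phi_w(m)>\mu$ iff $f_w(\mu,m)>0$'' uses that $\mu>0$ exceeds the nonpositive root, and the strict negativity of $\partial_w f_w$ at the root uses that the discriminant $(1-\mu-m)^2+4\mu m$ never vanishes for $m\ge 0$ (clear since $\mu\in(0,1)$); neither is an obstacle.
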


\begin{proof}[Proof of Lemma~\ref{lem:assumption2}]
 We prove the lemma for $\phi_m$. The results on $\phi_w$ follow since both functions coincide when $r=K=1$. 
\delbyqg{Notice that the result on $\phi_w$ can be obtained from the one on $\phi_m$ by setting $ r=K=1$. We will thus only consider the case of $\phi_m$.} The fact that $\phi_m$ is decreasing simply comes from the computation of its derivative:
\[\phi_m'(w)=\frac 12\left[-1+\frac{-2\left(K-\frac{\mu K}{r}-w\right)+4\frac{\mu K}{r}}{2\sqrt{\left(K-\frac{\mu K}{r}-w\right)^2+4\frac{\mu K}{r}w}}\right],
\]
one can check that $\phi_m'(w)<0$ for all $w\geq 0$ as soon as $\mu<\frac r2$. Next, we can estimate $\phi_m(w)$ for $w>0$ large:
\begin{eqnarray*}
 \phi_m(w)&=&\frac{w+\frac{\mu K}r-K}2\left(-1+\sqrt{1+\frac{4\mu Kw}{r\left(w+\frac{\mu K}r-K\right)^2}}\right)\\
&=&\frac{w+\frac{\mu K}r-K}2\left(\frac{2\mu Kw}{r\left(w+\frac{\mu K}r-K\right)^2}+o(1/w)\right)\\
&=&\frac{\mu K}r+o(1),
\end{eqnarray*}
that is $\lim_{w\to\infty}\phi_m(w)=\frac{\mu K}r$, which, combined to the variation of $\phi_w$, shows that $\phi_m([0,1])\subset (\mu K/r,\infty)$.

\end{proof}

\begin{lem}
\label{lem:zerosfmfw}
Let $r,\,K,\,\mu$ satisfy Assumption~\ref{ass}, $\phi_w$, $\phi_m$ defined by \eqref{def:phiw} and \eqref{def:phim} respectively.
\[ Z_w=\{(w, m)\in (0, 1)\times(0, K)/f_w(w,m)=0 \}, \]
\[ Z_m=\{(w,m)\in (0, 1)\times(0, K)/f_m(w, m)=0 \}, \]
and denote
\begin{equation}
\mathcal D=(0, 1)\times(0, K). 
\end{equation}
Then:
\begin{enumerate}
\item $ Z_w $ can be described in two ways:
\begin{equation}\label{eq:fwzerosx}
Z_w=\left\{\left(\phi_w(m)
, m\right), m\in(0, K) \right\},
\end{equation}
and
\begin{equation}\label{eq:fwzerosy}
Z_w=\left\{\left(w,\varphi_w(w)
\right),\qg w \in(\mu, 1) \right\} \cap \mathcal D,
\end{equation}
where $\varphi_w(w)=\frac{w(1-\mu-w)}{w-\mu}$.
\item Similarly, $ Z_m $ can be described as:
\begin{equation}
\label{eq:fmzerosx}
Z_m=\left\{\left(w, \phi_m(w)
\right), w\in(0, 1)\right\},
\end{equation}
and
\begin{equation}
\label{eq:fmzerosy}
Z_m=\left\{\left(\varphi_m(m)
, m\right), m\in\left(\frac{\mu K}{r}, K\right) \right\}\cap\mathcal D,
\end{equation}
where $\varphi_m(m):=\frac{m(K-\frac{\mu K}{r}-m)}{m-\frac{\mu K}{r}}$.
\end{enumerate}
\end{lem}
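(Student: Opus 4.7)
The proof is a direct algebraic manipulation: each of the equations $f_w(w,m)=0$ and $f_m(w,m)=0$ is a polynomial of degree two in either variable, and the two descriptions in \eqref{eq:fwzerosx}--\eqref{eq:fmzerosy} correspond to solving this polynomial first for one variable in terms of the other, then vice versa. The only real work is to track signs to identify the correct root and to pin down the interval over which the explicit inverse is defined. I would organize the proof in two independent blocks, one for $Z_w$ and one for $Z_m$, each carried out in two steps.

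For $Z_w$, I start from the expansion
\[
f_w(w,m)=-w^2+(1-\mu-m)w+\mu m,
\]
which viewed as a polynomial in $w$ has product of roots equal to $-\mu m<0$. Hence exactly one of the two roots is positive, and it is the one with the $+$ sign in the discriminant formula, which is precisely $\phi_w(m)$ from \eqref{def:phiw}. This gives \eqref{eq:fwzerosx}. For \eqref{eq:fwzerosy}, I rewrite $f_w(w,m)=0$ as $m(w-\mu)=w(1-\mu-w)$; when $w\neq\mu$ this yields $m=\varphi_w(w)$. The sign of the numerator forces $w\in(0,1-\mu)$, while the requirement $m>0$ together with $w<1-\mu$ forces $w>\mu$, so only $w\in(\mu,1-\mu)\subset(\mu,1)$ is retained. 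Intersecting with $\mathcal D$ then gives \eqref{eq:fwzerosy}.

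For $Z_m$ the same strategy applies to
\[
f_m(w,m)=-\tfrac{r}{K}m^2+\left(r-\mu-\tfrac{r}{K}w\right)m+\mu w.
\]
As a polynomial in $m$, its product of roots is $-\mu w K/r<0$, so the positive root is the one with $+$ in the discriminant, namely $\phi_m(w)$ of \eqref{def:phim}, establishing \eqref{eq:fmzerosx}. Reorganizing $f_m=0$ as $w\bigl(\mu-\tfrac{r}{K}m\bigr)=-m\bigl(r-\mu-\tfrac{r}{K}m\bigr)$ and dividing by $\mu-\tfrac{r}{K}m$ (valid when $m\neq \mu K/r$) produces $w=\varphi_m(m)$. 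The sign analysis is symmetric to the previous one: $w>0$ and $m\in(0,K)$ force $m>\mu K/r$, which, combined with $\mathcal D$, gives \eqref{eq:fmzerosy}.

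There is no real obstacle here; the proof is bookkeeping. The only subtlety worth emphasizing is that in the second descriptions \eqref{eq:fwzerosy} and \eqref{eq:fmzerosy} the formulas $\varphi_w$ and $\varphi_m$ also take negative values and blow up at $w=\mu$, $m=\mu K/r$ respectively; the restriction to $w\in(\mu,1)$ (resp.\ $m\in(\mu K/r,K)$) and the subsequent intersection with $\mathcal D$ is exactly what removes those spurious branches. Finally, Assumption~\ref{ass} (in particular $\mu<\min(K, 1-K)$) guarantees that the retained intervals are nonempty and that these parametrizations do describe the whole of $Z_w$ and $Z_m$ inside $\mathcal D$, matching the monotonicity of $\phi_w$, $\phi_m$ already noted in Lemma~\ref{lem:assumption2}.
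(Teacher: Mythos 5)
Your proof is correct and follows essentially the same route as the paper: solve each quadratic in one variable and identify $\phi_w$, $\phi_m$ as the unique positive root, then rearrange to the rational expressions $\varphi_w$, $\varphi_m$ and discard the spurious branch by a sign analysis (the paper handles only $Z_m$ and obtains $Z_w$ by setting $r=K=1$, and invokes Lemma~\ref{lem:assumption2} where you argue the lower bounds $w>\mu$, $m>\mu K/r$ directly, but these are cosmetic differences). The only loose phrasing is ``the sign of the numerator forces $w\in(0,1-\mu)$'': what actually rules out the branch with both numerator and denominator negative is that it would require $\mu>1/2$, contradicting $\mu<\min(K,1-K)\leq 1/2$ from Assumption~\ref{ass}.
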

\begin{proof}[Proof of Lemma~\ref{lem:zerosfmfw}]
Notice that point 1 can be obtained from point 2 by setting $ r=K=1. $ Thus, we are only going to prove point 2. We write
\[ f_m(w, m)=rm\left(1-\frac{w+m}{K}\right)+\mu(w-m)=-\frac{r}{K}m^2+\left(r-\mu-\frac{r}{K}w\right)m+\mu w. \]
Since $ \Delta=\left(r-\mu-\frac{r}{K}w\right)^2+4\qg{\frac{\mu r}{K}} w >0 $ for any $ w\geq 0, $ $ f_m(w, m)=0 $ admits only two solutions for $ w\geq 0 $ fixed. Those write:
\[ \frac{1}{2} \left((K-\frac{\mu K}{r}-w)\pm\sqrt{(K-\frac{\mu K}{r}-w)^2+4\frac{\mu K}{r} w}\right),\]
one of those two solutions is negative for all $w\neq 0$, so that $f_m(w,m)=0$ with $(w,m)\in\mathcal D$ implies that $m=\phi_m(w)$,
%
which leads to \eqref{eq:fmzerosx}.

Thanks to Lemma~\ref{lem:assumption2}, $m>\mu K/r$ on $Z_m$, $f(w,m)=0$ with $(w,m)\in\mathcal D$ then implies $w=\varphi_m(m)$.
For $ m\in\left(0, \frac{\mu K}{r}\right), $ $ \varphi_m(m) $ is decreasing and
\[ \left\{\begin{array}{l} \varphi_m(0)=0, \\ \underset{m\rightarrow\left(\frac{\mu K}{r}\right)^-}{\lim\phantom{fsd}}\varphi_m(m)=-\infty, \end{array}\right. \]
so that $ \varphi_m(m)<0 $ for $ m \in \left(0, \frac{\mu K}{r}\right). $ That proves \eqref{eq:fmzerosy}. 
\end{proof}
The next lemma proves that $ f $ admits only one zero in $ \mathcal D, $ and proves some inclusions between $ f_m>0 $ and $ f_w>0. $ 
\begin{lem}
\label{lem:deczeros}
Let $r,\,K,\,\mu$ satisfy Assumption~\ref{ass}, $\phi_w$, $\phi_m$, $\varphi_w$, $\varphi_m$ defined by \eqref{def:phiw}, \eqref{def:phim}, \eqref{eq:fwzerosy} and \eqref{eq:fmzerosy} respectively. Then $ \phi_w $ and $ \varphi_m $ are convex, strictly decreasing functions over $ (0, K) $ and $ \left(\frac{\mu K}{r}, K\right) $ respectively. 
\end{lem}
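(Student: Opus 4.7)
My plan is to establish both statements through implicit differentiation of the defining algebraic equations $f_w(w,m)=0$ (for $\phi_w$) and $f_m(w,m)=0$ (for $\varphi_m$), together with a crucial simplification using the defining relation itself.

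\textbf{Strict monotonicity.} Writing $F(w,m):=w^2+w(m+\mu-1)-\mu m$ so that $F(\phi_w(m),m)=0$, implicit differentiation gives
$$\phi_w'(m)=-\frac{F_m}{F_w}=-\frac{\phi_w-\mu}{2\phi_w+m+\mu-1}.$$
Lemma~\ref{lem:assumption2} yields $\phi_w(m)>\mu$, and inspection of the positive-root formula \eqref{def:phiw} shows that $2\phi_w(m)-(1-\mu-m)=\sqrt{(1-\mu-m)^2+4\mu m}>0$, so both numerator and denominator are strictly positive and $\phi_w'(m)<0$. The analogous implicit differentiation of $f_m(\varphi_m(m),m)=0$ produces $\varphi_m'(m)=(\mu\varphi_m/m+rm/K)/(\mu-rm/K)$, whose denominator is strictly negative on $(\mu K/r,K)$ while the numerator is strictly positive, yielding strict monotonicity.

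\textbf{Convexity.} Differentiating $F(\phi_w(m),m)=0$ twice, and using $F_{ww}=2$, $F_{wm}=1$, $F_{mm}=0$, a direct computation gives
$$\phi_w''(m)=\frac{2F_m(F_w-F_m)}{(F_w)^3}=\frac{2(\phi_w-\mu)\,(\phi_w+m+2\mu-1)}{(2\phi_w+m+\mu-1)^3}.$$
The numerator factor $\phi_w+m+2\mu-1$ is not obviously signed; the key step is to use the defining relation $F(\phi_w,m)=0$, which rearranges to $\phi_w+m+\mu-1=\mu m/\phi_w$, so that
$$\phi_w+m+2\mu-1=\mu\,\frac{m+\phi_w}{\phi_w}>0.$$
Combined with $\phi_w-\mu>0$ and $2\phi_w+m+\mu-1>0$ from the monotonicity step, this gives $\phi_w''(m)>0$. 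An entirely analogous calculation with $G(w,m):=rm(1-(w+m)/K)+\mu(w-m)$ reduces, after substituting $r(1-(w+m)/K)=\mu(m-w)/m$ obtained from $G(\varphi_m,m)=0$, to
$$\varphi_m''(m)=\frac{2r\mu\,(\varphi_m+m)}{mK\,(\mu-rm/K)^2}>0,$$
which is the desired convexity of $\varphi_m$.

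\textbf{Main obstacle.} The raw formulas for the second derivative contain one factor whose sign is not transparent from the formula itself ($\phi_w+m+2\mu-1$ in the first case, $r(1-(w+m)/K)-2\mu$ in the second). The only substantive idea in the proof is to re-use the defining equation $F=0$ (resp. $G=0$) to eliminate this indefinite quantity and expose a manifestly positive expression. Once this is observed the lemma reduces to bookkeeping.
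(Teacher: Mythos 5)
Your argument is correct in outline but follows a genuinely different route from the paper's. The paper treats $\phi_w$ by differentiating the explicit square-root formula \eqref{def:phiw} twice and simplifying the numerator of $\phi_w''$ down to $2\mu(1-2\mu)>0$, and it treats $\varphi_m$ by the polynomial division
$\varphi_m(m)=K\left(1-\frac{2\mu}{r}\right)-m+\frac{\frac{\mu K^2}{r}\left(1-\frac{2\mu}{r}\right)}{m-\frac{\mu K}{r}}$,
from which strict decrease and convexity are immediate because $\mu<\frac r2$. Your implicit-differentiation scheme, with the reinjection of the defining relation to sign the indefinite factor, is a legitimate and arguably more systematic alternative; the $\phi_w$ half is complete as written, since $2\phi_w+m+\mu-1=\sqrt{(1-\mu-m)^2+4\mu m}>0$, $\phi_w-\mu>0$ comes from Lemma~\ref{lem:assumption2}, and your identity $\phi_w+m+2\mu-1=\mu(m+\phi_w)/\phi_w$ checks out (it is consistent with the paper's numerator $2\mu(1-2\mu)$).

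The $\varphi_m$ half, however, contains a real gap: you assert that the numerator $\mu\varphi_m/m+rm/K$ in your formula for $\varphi_m'$ is ``strictly positive'' and that $\varphi_m+m>0$ in your formula for $\varphi_m''$, but neither is obvious, because $\varphi_m$ is \emph{negative} on $\left(K\left(1-\frac{\mu}{r}\right),K\right)$: in \eqref{eq:fmzerosy} the factor $K-\frac{\mu K}{r}-m$ changes sign there while $m-\frac{\mu K}{r}$ stays positive. Both claims are true, and both reduce to the single identity
\[
\varphi_m(m)+m=\frac{m\left(K-\frac{\mu K}{r}-m\right)}{m-\frac{\mu K}{r}}+m=\frac{mK\left(1-\frac{2\mu}{r}\right)}{m-\frac{\mu K}{r}}>0,
\]
which uses $\mu<\frac r2$ from Assumption~\ref{ass}; then $\mu\varphi_m/m+rm/K=\mu(\varphi_m+m)/m+\left(rm/K-\mu\right)>0$ because $m>\mu K/r$. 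You should make this verification explicit: it is exactly the point where the hypothesis $\mu<\frac r2$ enters, and without it the signs you need cannot be read off your formulas.
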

\begin{proof}[Proof of Lemma~\ref{lem:deczeros}]
We have already shown that $\phi_w$ is decreasing on $(0,K)$. We compute:
\[ \phi_w'(m)=-\frac{1}{2}\left(1+\frac{1-3\mu-m}{\sqrt{(1-\mu-m)^2+4\mu m}}\right).\]
Computing the second derivative, we find:
\[ \phi_w''(m)=\frac{(1-\mu-m)^2+4\mu m-(1-3\mu-m)^2}{2\left((1-\mu-m)^2+4\mu m\right)^{\frac{3}{2}}} \]
\[ =\frac{2\mu(1-2\mu)}{\left((1-\mu-m)^2+4\mu m\right)^{\frac{3}{2}}} >0, \]
so that $ \phi_w $ is convex over $ \mathbb R_+. $ 
Thanks to polynomial arithmetics, we compute:
\[ \varphi_m(m)=\frac{m\left(K\left(1-\frac{\mu}{r}\right)-m\right)}{m-\frac{\mu K}{r}}=K\left(1-\frac{2\mu}{r}\right)-m+\frac{\frac{\mu K^2}{r}\left(1-\frac{2\mu}{r}\right)}{m-\frac{\mu K}{r}}, \]
which makes $ \varphi_m $ obviously convex and strictly decreasing on $ \left(\frac{\mu K}{r}, K\right) $.
\end{proof}

\begin{lem}
\label{lem:zerof}
There exists a unique solution to the problem:
\begin{equation}
\label{eq:pbzerof}
f_w(w, m)=f_m(w, m)=0,
\end{equation}
with $ (w, m)\in(0,1)\times(0, K). $
\end{lem}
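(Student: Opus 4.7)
My plan is to reduce the two-equation system to a scalar equation and then analyze that equation through derivative estimates. Using the two parametrizations of $Z_w$ and $Z_m$ already produced in Lemma~\ref{lem:zerosfmfw}, namely $Z_w=\{(\phi_w(m),m)\}$ from \eqref{eq:fwzerosx} and $Z_m=\{(\varphi_m(m),m)\}$ from \eqref{eq:fmzerosy}, a point $(w,m)\in\mathcal D$ solves \eqref{eq:pbzerof} if and only if $\phi_w(m)=\varphi_m(m)$. I therefore set
\[
h(m) := \phi_w(m)-\varphi_m(m), \qquad m\in(\mu K/r,\,K),
\]
and aim to show $h$ has exactly one zero on this interval.

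The boundary analysis is straightforward. As $m\to (\mu K/r)^+$, $\varphi_m(m)\to +\infty$ while $\phi_w$ stays bounded, so $h\to -\infty$. At $m=K(1-\mu/r)$, which lies in $(\mu K/r,K)$ because $\mu<r/2$ from Assumption~\ref{ass}, $\varphi_m$ vanishes while $\phi_w(K(1-\mu/r))>0$ (since $\phi_w\geq\phi_w(K)>0$ by monotonicity and Lemma~\ref{lem:assumption2}), so $h(K(1-\mu/r))>0$. Continuity of $h$ then gives at least one zero.

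The crux of the argument is strict monotonicity $h'>0$. The partial fraction decomposition already recorded in the proof of Lemma~\ref{lem:deczeros},
\[
\varphi_m(m)=K\bigl(1-\tfrac{2\mu}{r}\bigr)-m+\frac{(\mu K^2/r)(1-2\mu/r)}{m-\mu K/r},
\]
gives $\varphi_m'(m)=-1-\frac{(\mu K^2/r)(1-2\mu/r)}{(m-\mu K/r)^2}<-1$ on $(\mu K/r,K)$, using $\mu<r/2$. For $\phi_w$, completing the square under the radical and introducing $v:=1-3\mu-m$ rewrites
\[
\phi_w(m)=\mu+\tfrac12\bigl(v+\sqrt{v^2+4\mu(1-2\mu)}\bigr),
\]
so $\phi_w'(m)=-\tfrac12\bigl(1+v/\sqrt{v^2+4\mu(1-2\mu)}\bigr)\in(-1,0)$, where the strict lower bound $\phi_w'>-1$ uses $\mu<1/2$; this last inequality is implied by Assumption~\ref{ass} because $K>0$ together with $K<\tfrac{r}{r-1}\cdot\tfrac{1-2\mu}{1-\mu}$ forces $1-2\mu>0$. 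Combining yields $h'(m)=\phi_w'(m)-\varphi_m'(m)>0$ strictly, so the zero $m^*$ is unique. Setting $w^*:=\phi_w(m^*)=\varphi_m(m^*)$ and applying Lemma~\ref{lem:assumption2} gives $w^*\in(\mu,1-\mu)\subset(0,1)$ and $m^*\in(\mu K/r,K(1-\mu/r))\subset(0,K)$, producing the unique solution in $\mathcal D$.

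I expect the derivative comparison to be the main obstacle: once both $\varphi_m$ and $\phi_w$ are written in their ``linear plus controlled remainder'' forms, the inequalities $\varphi_m'<-1<\phi_w'$ follow by direct inspection, but it takes some care to identify which piece of Assumption~\ref{ass} delivers each strict inequality (in particular, noting that $\mu<1/2$ is a hidden consequence of the positivity of $K$ combined with the upper bound on $K$).
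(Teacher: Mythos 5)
Your proof is correct, and the uniqueness part takes a genuinely different route from the paper. Both arguments share the same reduction and the same IVT-based existence step (you evaluate at $m=K(1-\mu/r)$ where $\varphi_m$ vanishes, the paper at $m=K$ where $\varphi_m<0$; either works). For uniqueness, however, the paper substitutes $m=\varphi_w(w)$ from $f_w=0$ into $f_m=0$, clears denominators to obtain a quadratic $C(w)$, and concludes from $C(0)<0<C(1)$ together with $\deg C\leq 2$ that there is exactly one root in $(0,1)$; this is where the specific pieces $\mu<1-\tfrac1r$ and $K<\tfrac{r}{r-1}\bigl(1-\tfrac{\mu}{1-\mu}\bigr)$ of Assumption~\ref{ass} enter. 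You instead prove that $h=\phi_w-\varphi_m$ is strictly increasing via the derivative bounds $\varphi_m'<-1<\phi_w'$, which I have checked: the partial-fraction form of $\varphi_m$ gives $\varphi_m'=-1-\tfrac{(\mu K^2/r)(1-2\mu/r)}{(m-\mu K/r)^2}<-1$ under $\mu<r/2$, and the substitution $v=1-3\mu-m$ does yield $(1-\mu-m)^2+4\mu m=v^2+4\mu(1-2\mu)$, hence $\phi_w'\in(-1,0)$ under $\mu<1/2$ (which indeed follows from Assumption~\ref{ass}, e.g.\ most directly from $\mu<K$ and $\mu<1-K$). Your approach buys a unified monotone-function argument that handles existence and uniqueness simultaneously and needs only $\mu<r/2$ and $\mu<1/2$ rather than the sign computations $C(0)<0$, $C(1)>0$; the paper's quadratic argument is more computational but makes explicit which endpoint inequalities of Assumption~\ref{ass} are being consumed. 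The only point worth stating explicitly in a final write-up is that the equivalence between zeros of $h$ on $(\mu K/r,K)$ and solutions in $\mathcal D$ uses that $\varphi_m<0$ on $(0,\mu K/r)$ and that $\phi_w(m)\in(\mu,1-\mu)\subset(0,1)$ on $(0,K)$, both of which are already supplied by Lemmas~\ref{lem:zerosfmfw} and~\ref{lem:assumption2}.
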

\begin{proof}[Proof of Lemma~\ref{lem:zerof}]
We write:
\[ f_w=w(1-\mu-w)+m(\mu-w). \]
Since $ \mu < \frac{1}{2}, $ we have
\[ f_w(\mu, m)=\mu(1-2\mu)>0, \]
so that there cannot be a solution of $ f_w(w, m)=0 $ with $ w=\mu. $ Thus, $ f_w(w, m)=0 $ if and only if
\begin{equation}
\label{eq:mfw}
 m=\frac{w(1-\mu-w)}{w-\mu}.
\end{equation}
Substituting \eqref{eq:mfw} in $ f_m(w, m)=0, $ we get:
\[ \eqref{eq:pbzerof}\Rightarrow\underset{A}{\underbrace{r\frac{w(1-\mu-w)}{w-\mu}\left(1-\frac{\frac{w(1-\mu-w)}{w-\mu}+w}{K}\right)}}+\underset{B}{\underbrace{\mu\left(w-\frac{w(1-\mu-w)}{w-\mu}\right)}}=0. \]
We compute:
\[ A=\frac{rw(1-\mu-w)}{K(w-\mu)^2}\left(w(K+2\mu-1)-\mu K\right), \]
\[ B=\mu \frac{w(2w-1)}{w-\mu}. \]
From now on we assume $ w\neq 0. $ Then:
\[ \eqref{eq:pbzerof}\Rightarrow C(w):=\frac{r}{K}(1-\mu-w)(w(K+2\mu-1)-\mu K)+\mu(2w-1)(w-\mu)=0. \]
Now $ C $ is a polynomial function of degree at most 2. We compute:
\[ C(0)=\mu(1-r(1-\mu))<0, \]
\[ C(1)=\mu\left(1-\mu+\frac{r}{K}((1-\mu)(1-K)-\mu)\right)>0, \]
under the following assumptions:
\[ \mu<1-\frac{1}{r}, \]
\[ K<\frac{r}{r-1}\left(1-\frac{\mu}{1-\mu}\right). \]
That proves the uniqueness of a solution of \eqref{eq:pbzerof} with $ w\in(0, 1). $

Now recall the notations of lemmas \ref{lem:zerosfmfw} and \ref{lem:deczeros}. The existence of a solution to problem \eqref{eq:pbzerof} is equivalent to showing $ Z_m\cap Z_w\neq\varnothing. $ Since:
\[ \Phi_w\left(\frac{\mu K}{r}\right)\in\mathbb R, \]
\[ \underset{m\rightarrow \left(\frac{\mu K}{r}\right)^+}{\lim}\varphi_m(m)=+\infty, \]
\[ \Phi_w(K)=\frac{1}{2}\left(1-\mu-K+\sqrt{(1-\mu-K)^2+4\mu K}\right)>0, \]
\[ \varphi_m(K)=-\frac{\mu}{r-\mu}<0, \]
and since $ \Phi_w $ and $ \varphi_m $ are continuous over $ \left(\frac{\mu K}{r}, K\right), $ there exists a solution to $ \varphi_m(m)=\Phi_w(m) $ with $ m\in \left(\frac{\mu K}{r}, K\right). $ 

Since $ \forall m\in(0, K), 0<\Phi_w(m)<1, $ that gives us a solution to \eqref{eq:pbzerof}, and proves Lemma \ref{lem:zerof}. 
\end{proof}

\begin{lem}\label{lem:zerosinclusions}
Let $ \mathcal D=(0, 1)\times(0, K), $  
\[ \gr{Z_w=\{f_w=0 \}\cap \mathcal D=\{w=\phi_w(m)\}\cap \mathcal D}, \quad Z_w^-=\{f_w<0\}\cap \mathcal D=\{w>\phi_w(m)\}\cap\mathcal D, \]
\[ \gr{Z_m=\{f_m=0 \}\cap \mathcal D=\{m=\phi_m(w)=0 \}\cap \mathcal D},\quad Z_m^-=\{f_m<0\}\cap\mathcal D=\{m>\phi_m(w)<0\}\cap\mathcal D, \]
and
\[ \mathcal D_l=\{(w, m)\in\mathcal D, w\leq w^*, m\geq  m^* \}, \]
\[ \mathcal D_r=\{(w, m)\in\mathcal D, w\geq  w^*, m\leq m^* \}, \]
where $ (w^*, m^*) $ is the only solution of $ f_m=f_w=0 $ in $ \mathcal D. $ 
Then 
\begin{equation}
\label{eq:redzeros}
 Z_m\cup Z_w\subset \mathcal D_l\cup \mathcal D_r.
\end{equation}
Moreover, 
\begin{equation}
\label{eq:zeroinclusionwsm}
Z_w^-\cap \mathcal D_l\subset Z_m^-,
\end{equation}
\begin{equation}
\label{eq:zeroinclusionmsw}
Z_m^-\cap \mathcal D_r\subset Z_w^- .
\end{equation}
\end{lem}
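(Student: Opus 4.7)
The plan is to reformulate the inclusions through the single-variable parameterizations of Lemma~\ref{lem:zerosfmfw}, namely $Z_w = \{(\phi_w(m), m)\}$ and $Z_m = \{(\varphi_m(m), m)\}$ (on their respective domains of definition), and combine the monotonicity provided by Lemma~\ref{lem:deczeros} with the uniqueness of intersection supplied by Lemma~\ref{lem:zerof}. Inclusion \eqref{eq:redzeros} is then immediate: for any $(w, m) \in Z_w$ one has $w = \phi_w(m)$, and since $\phi_w$ is strictly decreasing with $\phi_w(m^*) = w^*$, the sign of $m - m^*$ places $(w, m)$ either in $\mathcal{D}_l$ (when $m \geq m^*$) or in $\mathcal{D}_r$ (when $m \leq m^*$). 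The analogous argument with $\phi_m$ handles $Z_m$.

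The key step is a global comparison of $\phi_w$ and $\varphi_m$ on their common interval of definition $(\mu K/r, K)$. Both are continuous, both coincide at $m^*$, and their endpoint values can be computed directly: $\varphi_m(m) \to +\infty$ as $m \to (\mu K/r)^+$ (the denominator vanishes with positive numerator, using $\mu < r/2$ from Assumption~\ref{ass}), whereas $\varphi_m(K) = -\mu/(r - \mu) < 0 < \phi_w(K)$. Any additional zero $m_0 \in (\mu K/r, K)$ of $\varphi_m - \phi_w$ would furnish a point $(\phi_w(m_0), m_0) \in \mathcal{D}$ solving both $f_w = 0$ and $f_m = 0$ (since $\phi_w$ maps $(0, K)$ into $(0, 1-\mu) \subset (0, 1)$), contradicting Lemma~\ref{lem:zerof}. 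Hence $m^*$ is the unique zero and the sign of $\varphi_m - \phi_w$ is forced by the endpoint behavior:
\[ \varphi_m(m) > \phi_w(m) \text{ for } m \in (\mu K/r, m^*), \qquad \varphi_m(m) < \phi_w(m) \text{ for } m \in (m^*, K). \]

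With this comparison, the remaining inclusions reduce to a short case analysis. For \eqref{eq:zeroinclusionwsm}, take $(w, m) \in Z_w^- \cap \mathcal{D}_l$; then $w > \phi_w(m)$ and $m \geq m^* > \mu K/r$ by Lemma~\ref{lem:assumption2}. If $m \geq K(1-\mu/r) = \phi_m(0)$, then $\phi_m(w) \leq \phi_m(0) \leq m$ gives $(w, m) \in Z_m^-$ directly; otherwise $m \in (m^*, K(1-\mu/r))$, where $\varphi_m = \phi_m^{-1}$ is a decreasing bijection onto $(0, w^*)$, and the chain $w > \phi_w(m) > \varphi_m(m)$ combined with the monotonicity of $\phi_m$ yields $\phi_m(w) < m$. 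Inclusion \eqref{eq:zeroinclusionmsw} is symmetric, using the opposite inequality on $(\mu K/r, m^*)$; one first notes that any $(w, m) \in Z_m^-$ with $w \leq 1$ forces $m > \phi_m(1) > \mu K/r$, so $\varphi_m(m)$ is well-defined, and then $w > \varphi_m(m) > \phi_w(m)$ for $m \in (\mu K/r, m^*)$. The main obstacle is the global comparison in the second paragraph; once the endpoint signs and Lemma~\ref{lem:zerof} are combined, the remaining inclusions are essentially bookkeeping.
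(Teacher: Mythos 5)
Your argument is correct and is essentially the paper's own proof, written out in full: the paper likewise derives \eqref{eq:redzeros} from the monotonicity of $\phi_w$ and $\varphi_m$, and obtains the other two inclusions from the sign of $\phi_w-\varphi_m$ on either side of $m^*$, which is pinned down by the endpoint behaviour of $\varphi_m$ near $(\mu K/r)^+$ and at $K$ together with the uniqueness of the crossing from Lemma~\ref{lem:zerof}. Your version merely supplies the bookkeeping (the case $m\geq\phi_m(0)$, the passage between $\phi_m$ and its inverse $\varphi_m$) that the paper leaves implicit.
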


\begin{Rk}\label{Rk:zerosinclusions}
 Let
\[Z_w^+=\{f_w>0\}\cap\mathcal D= \{w<\phi_w(m)\}\cap\mathcal D ,\quad Z_m^+=\{f_m>0\}\cap\mathcal D=\{m<\phi_m(w)\}\cap\mathcal D.\]
Lemma~\ref{lem:zerosinclusions} implies that
\[Z_m^+\cap \mathcal D_l\subset Z_w^+,\quad Z_w^+\cap \mathcal D_r\subset Z_m^+.\]
\end{Rk}

\begin{proof}[Proof of Lemma~\ref{lem:zerosinclusions}]
Assertion \eqref{eq:redzeros} comes from the fact that $ \Phi_w $ and $ \varphi_m $ are decreasing. 

Assertion \eqref{eq:zeroinclusionwsm} comes from the fact that $ \Phi_w-\varphi_m $ is negative for $ m $ close to $ \left(\frac{\mu K}{r}\right)^+ $ and does not change sign in $ \left(\frac{\mu K}{r},  m^*\right) $ since $ (w^*, m^*) $ is the only solution of \eqref{eq:pbzerof}. A similar argument proves assertion \eqref{eq:zeroinclusionmsw}
\end{proof}
The last thing we need here is an estimate of the behaviour of $ m^*(\mu, r, K) $ when $ \mu\rightarrow 0 $:

\begin{lem}
\label{lem:estmu0}
For $ \mu<1-K, $ we have
\begin{equation}
m^*(\mu, r, K)\leq\frac{\frac{\mu K}{r}(1-\mu)}{1-\mu-K\left(1-\frac{2\mu}{r}\right)}.
\end{equation}
\end{lem}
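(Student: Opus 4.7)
The plan is to combine the two equations $f_w(w^*,m^*)=0$ and $f_m(w^*,m^*)=0$ and work with the sum $s:=w^*+m^*$ as an intermediate parameter. First I would rewrite $f_w(w^*,m^*)=0$ as
$$w^*(s-1+\mu)=\mu\, m^*,$$
which, combined with $w^*,m^*>0$, forces $s>1-\mu$. Under the hypothesis $\mu<1-K$ this already gives $s>1-\mu>K$, so all quantities appearing below will have the expected signs.

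Next I would rewrite $f_m(w^*,m^*)=0$ (after multiplying by $K$) as
$$rm^*(s-K)=K\mu(w^*-m^*),$$
and then eliminate $w^*$ via the tautology $w^*=s-m^*$, which turns the right-hand side into $K\mu(s-2m^*)$. This is linear in $m^*$, so I can solve directly:
$$m^*=\frac{K\mu\, s}{r(s-K)+2K\mu}.$$
Note the small but decisive point here: substituting $w^*=s-m^*$ (rather than the non-linear $w^*=\varphi_m(m^*)$ from Lemma~\ref{lem:zerosfmfw}) keeps the expression rational in a single variable $s$.

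Finally I would use monotonicity. A direct computation of the derivative of $g(s):=\frac{Ks}{r(s-K)+2K\mu}$ gives a numerator equal to $K^2(2\mu-r)$, which is negative by Assumption~\ref{ass} since $\mu<r/2$. Hence $g$ is decreasing on $[1-\mu,\infty)$, and substituting $s\geq 1-\mu$ yields
$$m^*=\mu\, g(s)\leq \mu\, g(1-\mu)=\frac{K\mu(1-\mu)}{r(1-\mu-K)+2K\mu}=\frac{\tfrac{\mu K}{r}(1-\mu)}{1-\mu-K\bigl(1-\tfrac{2\mu}{r}\bigr)},$$
which is exactly the estimate in the statement. I do not expect any serious obstacle: the only idea beyond algebra is the choice to parametrise by $s=w^*+m^*$, which makes both the lower bound $s\geq 1-\mu$ (from $f_w=0$) and the closed-form expression for $m^*$ (from $f_m=0$) fall out immediately.
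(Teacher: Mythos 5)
Your proof is correct, and it takes a route that is presented quite differently from the paper's. The paper works in the $(w,m)$ phase plane: it bounds the curve $\{f_w=0\}$ from below by the line $w=1-\mu-m$ (i.e.\ $\Phi_w(m)\geq 1-\mu-m$), then argues via the sign of $\varphi_m-\Phi_w$ near $\left(\frac{\mu K}{r}\right)^+$ and the uniqueness of $(w^*,m^*)$ that the intersection $\bar m$ of $\{f_m=0\}$ with that line satisfies $\bar m\geq m^*$, and finally computes $\bar m$ explicitly. You instead parametrise by $s=w^*+m^*$, extract the inequality $s>1-\mu$ directly from $f_w=0$ (which is the same information as the paper's linear lower bound on $\Phi_w$, but obtained without introducing the curve at all), solve $f_m=0$ exactly for $m^*$ as a rational function $\mu\,g(s)$ of $s$, and conclude by the one-line monotonicity check $g'<0$. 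The underlying mechanism is the same --- the $f_w=0$ relation pushes $w^*+m^*$ above $1-\mu$, and along $\{f_m=0\}$ the value of $m$ decreases as $w+m$ increases --- but your version avoids the appeal to Lemmas~\ref{lem:zerosfmfw}, \ref{lem:deczeros} and the sign-change argument, replacing them with elementary algebra; the paper's version, by contrast, recycles machinery it has already built for the phase-plane analysis of Lemma~\ref{lem:plan_phase}. Two small points to keep explicit: your monotonicity step uses $\mu<r/2$ from Assumption~\ref{ass} (not just the stated hypothesis $\mu<1-K$), which is consistent with how the paper uses the lemma but worth flagging; and the positivity of the denominator $r(s-K)+2K\mu$ on $[1-\mu,\infty)$, which you correctly reduce to $s>1-\mu>K$, is what legitimises both the closed form for $m^*$ and the monotonicity of $g$ on that interval.
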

\begin{proof}[Proof of Lemma~\ref{lem:estmu0}]
Recall the notations of Lemma \ref{lem:deczeros}. From Lemma \ref{lem:zerof} we know that $ m^* $ is the only solution of $ \Phi_w=\varphi_m $ that lies in $ (0, K). $ Since $ m\mapsto \sqrt m $ is increasing and $ 1-\mu-K>0, $ we have:
\begin{equation}
\Phi_w(m)\geq 1-\mu-m.
\end{equation}
We deduce then:
\[ \varphi_m(m)-\Phi_w(m)\leq\varphi_m(m)-(1-\mu-m). \]
Now $ \varphi_m-\Phi_w $ is positive near $ \left(\frac{\mu K}{r}\right)^+, $ and for $ w\in \left(\frac{\mu K}{r}, m^*\right), $
\[ 0<\varphi_m(w)-\Phi_w(w)\leq\varphi_m(w)-(1-\mu-m), \]
which means that if $ \bar m $ satisfies 
\begin{equation}
\label{eq:lemestmu0eq1} 
\varphi_m(\bar m)=1-\mu-\bar m. 
\end{equation}
then $ \bar m\geq  m^*. $ 
A simple computation shows that the only solution of \eqref{eq:lemestmu0eq1} is:
\[ \bar m=\frac{\frac{\mu K}{r}(1-\mu)}{1-\mu-K\left(1-\frac{2\mu}{r}\right)}, \]
which finishes to prove Lemma \ref{lem:estmu0}
\end{proof}

\end{document}